\apptocmd{\sloppy}{\hbadness 10000\relax}{}{}
\numberwithin{equation}{section}
\newtheorem{thm}{Theorem}[section]
\newtheorem{lem}[thm]{Lemma}
\newtheorem{prop}[thm]{Proposition}
\newtheorem{cor}[thm]{Corollary}
\theoremstyle{definition}
\newtheorem{remark}[thm]{Remark}
\newtheorem{exam}[thm]{Example}
\newtheorem{defn}[thm]{Definition}
\newtheorem{conj}[thm]{Conjecture}
\crefname{lem}{Lemma}{Lemmas}
\crefname{thm}{Theorem}{Theorems}
\crefname{prop}{Proposition}{Propositions}
\crefname{question}{Question}{Questions}
\crefname{defn}{Definition}{Definitions}
\crefname{conj}{Conjecture}{Conjectures}
\crefname{figure}{Figure}{Figures}
\crefname{cor}{Corollary}{Corollaries} 
\newcommand\sgn{\operatorname{sgn}}
\newcommand\NN{\mathbb{N}}
\newcommand{\CC}{\mathbb{C}}
\newcommand{\ZZ}{\mathbb{Z}}
\newcommand\sn{\mathfrak{S}_n}
\newcommand\JT{\operatorname{JT}}
\newcommand\tJT{\mathcal{E}}
\newcommand\HJT{\mathcal{H}}
\newcommand\Par{\operatorname{Par}}
\newcommand\SSYT{\operatorname{SSYT}}
\newcommand\SYT{\operatorname{SYT}}
\newcommand\Des{\operatorname{Des}}
\newcommand\NDes{\operatorname{NDes}}
\newcommand\comp{\operatorname{comp}}
\newcommand\Frob{\operatorname{Frob}}
\newcommand\imm{\operatorname{imm}}
\newcommand\wt{\operatorname{wt}}
\newcommand\type{\operatorname{type}}
\newcommand\TL{\operatorname{TL}}
\newcommand\s{\mathfrak{s}}
\renewcommand\vec[1]{\mathbf{#1}}
\newcommand{\ostar}{\mathbin{\mathpalette\make@circled\star}}
\newcommand{\make@circled}[2]{%
  \ooalign{$\m@th#1\smallbigcirc{#1}$\cr\hidewidth$\m@th#1#2$\hidewidth\cr}%
}
\newcommand{\smallbigcirc}[1]{%
  \vcenter{\hbox{\scalebox{0.77778}{$\m@th#1\bigcirc$}}}%
}
\newcommand\T[1]{\tau(#1)}
\newcommand\B[1]{\mathfrak{p}(#1)}
\newcommand{\csn}{\mathbb{C}[\sn]}
\newcommand{\K}{\mathcal{B}}
\title{Hadamard products of dual Jacobi--Trudi matrices}
\author{Robert Angarone}
\address{Department of Mathematics \\ University of Minnesota \\ Minneapolis \\ United States}
\email{angar017@umn.edu}
\author{Jang Soo Kim}
\address{Department of Mathematics \\ Sungkyunkwan University \\ Suwon \\ Korea}
\email{jangsookim@skku.edu}
\author{Jaeseong Oh}
\address{Department of Mathematics \\ Sungkyunkwan University \\ Suwon \\ Korea}
\email{jaeseongoh@skku.edu}
\author{Daniel Soskin}
\address{Department of Mathematics \\ University of California \\ Los Angeles \\ United States}
\email{dsoskin@math.ucla.edu}
\begin{document}

\begin{abstract}
  We study positivity properties of Hadamard products of Jacobi--Trudi matrices. Mal\'{o} proved that the Hadamard (entrywise) product of two totally positive upper-triangular Toeplitz matrices whose Toeplitz sequences are the coefficient sequences of real-rooted polynomials with nonpositive zeros is again totally positive. Sokal conjectured that this result can be strengthened to total monomial positivity for the Hadamard product of Jacobi--Trudi matrices. In this paper we show that Temperley--Lieb immanants are Schur positive for Hadamard products of Jacobi--Trudi matrices given by ribbon-like skew shapes. In particular, we affirm Sokal's conjecture for minors given by ribbon-like skew shapes. Moreover, we provide a manifestly positive Schur expansion for Temperley--Lieb immanants evaluated on the Hadamard product of Jacobi--Trudi matrices indexed by ribbons. In addition, for the ribbon case, we construct a corresponding representation, offering a representation-theoretic proof of the Schur positivity.
\end{abstract}

\maketitle

\section{Introduction}
A matrix is said to be \emph{totally positive} if all of its minors are nonnegative. Total positivity arises in various areas of mathematics and has wide-ranging applications. Originally studied by Gantmacher and Krein \cite{Gantmacher1935,Gantmakher1937} in the context of classical analysis and numerical interpolation, total positivity has since become a central topic in representation theory, geometry, mathematical physics, and algebraic combinatorics (see the survey by Fomin \cite{Fomin2010} and references therein).

By the Cauchy--Binet theorem, it is immediate that the product
\( MN \) of two totally positive matrices \( M \) and \( N \) is also
totally positive. However, their Hadamard (entrywise) product
\( M * N \) need not be totally positive in general. 
Nevertheless, there exists a class of matrices whose total positivity is preserved under the Hadamard product. 
A \textit{Toeplitz matrix} with \textit{Toeplitz sequence $(a_0,a_1,\ldots)$} is an infinite, upper-triangular matrix
of the form $(a_{j-i})_{i,j=0}^\infty$.
The \emph{Laguerre--Pólya class} is the class of entire functions that arise as uniform limits of
univariate polynomials with nonnegative coefficients and real roots. 
Mal\'{o} \cite{Malo1895} proved that the Hadamard (entrywise) product of two totally positive Toeplitz matrices remains totally positive, provided that their Toeplitz sequences are the coefficient sequences of series in the Laguerre--Pólya class.

For an entire function \( p(t) = \prod_{i\ge0}(1 + \alpha_it) \) in the Laguerre--Pólya class, if we replace the nonnegative real numbers \( \alpha_i \ge 0 \) by variables \( x_i \), then the coefficient of $t^k$ in \( p(t) \) becomes the \emph{elementary symmetric function} \( e_k(\vec{x}) \) in the variables \( \vec x = (x_1, x_2 ,\dots) \). In this way, the Toeplitz matrix whose entries are given by the coefficients of \( p(t) \) can be regarded as a special case of the following matrix:
\[
   M(\vec{x}) = \bigl(e_{j-i}(\vec{x})\bigr)_{i,j \ge 0}.
\]
The minors of this matrix are determinants of the \emph{(dual) Jacobi--Trudi matrices}\footnote{Our convention differs from the standard one by a conjugation of the skew shape \( \lambda/\mu \); see \Cref{rmk: nonstandard convention}.}
\[
   \tJT_{\lambda/\mu}(\vec{x}) := \bigl(e_{\lambda_i - \mu_j - i + j}(\vec{x})\bigr)_{i,j=1}^{\ell(\lambda)}.
\]
Here \(\lambda/\mu\) is a skew shape, and \( \ell(\lambda) \) denotes the number of parts of the partition \( \lambda \). By the Jacobi--Trudi identity and the Littlewood--Richardson rule, every minor of \( M(\vec{x}) \) is Schur-positive, and hence monomial-positive. Schur positivity and monomial positivity are stronger notions of positivity than positivity of real numbers. 
In connection with Mal\'{o}’s theorem, one may then ask whether the total positivity of the Hadamard product of two Toeplitz matrices \( M(\vec{x}) \) and \( M(\vec{y}) \), when \( x_i, y_j \ge 0 \), can be strengthened to the monomial positivity of each minor. Very recently, Sokal~\cite{Sokal2024} formulated the following conjecture on the monomial positivity of the Hadamard product of \( M(\vec{x}) \) and \( M(\vec{y}) \) for distinct sequences of variables \( \vec{x} = (x_1, x_2, \dots) \) and \( \vec{y} = (y_1, y_2, \dots) \).

\begin{conj}[Sokal's conjecture]\label{conj: Sokal's conjecture}
  The Hadamard product \( M(\vec x) * M(\vec y) \) is totally
  monomial positive. In other words, every minor
\[
    \det(\tJT_{\lambda/\mu}(\vec x) * \tJT_{\lambda/\mu}(\vec y))
\]
is a multi-symmetric function in the variables \( \vec x \) and \( \vec y \) with
nonnegative integer coefficients.
\end{conj}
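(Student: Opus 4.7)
The plan is to establish the stronger statement that $\det(\tJT_{\lambda/\mu}(\vec x) * \tJT_{\lambda/\mu}(\vec y))$ is a nonnegative integer combination of products $s_\alpha(\vec x)\, s_\beta(\vec y)$; since each such product is monomial positive, this upgrade would immediately imply the conjecture. The starting point is the classical single-variable picture: $\det \tJT_{\lambda/\mu}(\vec x) = s_{\lambda/\mu}(\vec x)$ admits a Lindstr\"om--Gessel--Viennot interpretation as a signed sum over families of lattice paths, and intersecting families cancel through the standard path-swapping involution, leaving the SSYT of shape $\lambda/\mu$ as the fixed points.

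The first approach I would pursue is a doubled path model. Because the $(i,j)$ entry of the Hadamard product is $e_a(\vec x)\, e_a(\vec y)$ for some $a$, it naturally encodes a pair of equal-length lattice paths, one carrying $\vec x$-weights and one carrying $\vec y$-weights. Expanding the determinant then produces a signed sum over families of path-pairs indexed by a permutation, and the goal is to design a sign-reversing involution on the non-canonical families whose fixed points are enumerated by pairs of SSYT of shapes $\alpha$ and $\beta$ bound by a compatibility condition tying their shared sequence of row lengths back to $\lambda/\mu$.

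A complementary approach, closer to existing technology, uses the Temperley--Lieb immanant decomposition of the determinant in $\csn$. Rhoades and Skandera proved that each TL immanant of a Jacobi--Trudi matrix is Schur positive, giving a structural refinement of the LGV picture, and in the Hadamard setting the determinant still decomposes as $\sum_{\tau} \imm_\tau^{\TL}$ over non-crossing matchings $\tau$. I would try to produce a Schur-positive formula for each $\imm_\tau^{\TL}(\tJT_{\lambda/\mu}(\vec x) * \tJT_{\lambda/\mu}(\vec y))$ separately, ideally categorified by a diagonal $\csn$- (or $\csn \otimes \csn$-) module whose bi-Frobenius character realizes the immanant, which would simultaneously yield a representation-theoretic proof.

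The main obstacle is that the doubled path model destroys the crispness of the LGV involution: a crossing in the $\vec x$-system need not be paired with a crossing in the $\vec y$-system, so local path-swapping no longer cancels cleanly, and one must track how intersections in the two systems interact globally. Ribbon shapes are the natural first target precisely because each diagonal of $\lambda/\mu$ contains at most one cell, which forces the $\vec x$- and $\vec y$-path systems to synchronize their horizontal step heights and drastically restricts the possible intersection patterns. Accordingly, I expect the ribbon case to be tractable by the two strategies above, while unrestricted skew shapes appear to require genuinely new input, perhaps through an extension of the Temperley--Lieb immanant framework or a richer representation-theoretic construction that captures the interaction between the $\vec x$- and $\vec y$-data at cells belonging to $2 \times 2$ blocks of $\lambda/\mu$.
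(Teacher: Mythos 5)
This statement is a conjecture that the paper itself leaves open, so there is no ``paper proof'' to match; the paper only establishes the special cases where every skew shape avoids a $3\times 2$ block (\Cref{thm: TL pos for 3x2 avoiding}, \Cref{thm: Schur expansion of JT_R*JT_R}). Your proposal is likewise not a proof but a research plan, and its opening move already runs into a concrete obstruction recorded in the paper: you propose to prove the \emph{stronger} statement that $\det(\tJT_{\lambda/\mu}(\vec x) * \tJT_{\lambda/\mu}(\vec y))$ is a nonnegative combination of products $s_\alpha(\vec x)s_\beta(\vec y)$ for all skew shapes, but this is false in general --- $\det(\tJT_{(2,2,2)}(\vec x) * \tJT_{(2,2,2)}(\vec y))$ is not Schur positive. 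So Schur positivity cannot be the route to the full conjecture; it is only available in the $(3\times2)$-avoiding (or essentially $(3\times2)$-avoiding) regime. Your first strategy, a sign-reversing involution on doubled path families, is also provably blocked in the generality you would need it: the paper's \Cref{prop: counterexample} exhibits multinetworks $\vec H$ with $\sgn(\beta(\vec H))<0$ whenever the collection of shapes is not essentially $(3\times2)$-avoiding, which means no sign-reversing injection can exist that preserves the underlying multiset of edges; any cancellation argument must move edges around, which is exactly the ``genuinely new input'' you gesture at but do not supply. A smaller technical slip: the determinant is not a sum $\sum_\tau \imm_\tau$ over all noncrossing matchings; it equals the single Temperley--Lieb immanant $\imm_\tau$ for $\tau$ the identity Kauffman diagram (for which $f_\tau=\sgn$), so the decomposition you invoke has to be set up differently (in the paper one fixes $\tau$ and sums over multinetworks with prescribed $\beta(\vec H)$).

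That said, the portions of your plan aimed at restricted shapes track the paper's actual development closely. Your second strategy --- analyzing $\imm_\tau$ of the Hadamard product via Rhoades--Skandera's subnetwork generating functions, and exploiting the rigidity of ribbons (at most one essential intersection per adjacent pair of paths) --- is exactly how the paper proves the $(3\times2)$-avoiding and ribbon cases: $\beta(\vec H)$ collapses to $2^{\epsilon(\vec H)}\B{I_1\cap\cdots\cap I_k}$ (\Cref{lem: beta for multinetworks}), giving $m$-positivity, then Schur positivity, and for ribbons an explicit expansion in terms of standard Young tableaux with prescribed descent sets via RSK. Your suggestion of a module whose (bi-)Frobenius character realizes the expression is also realized in the paper, as the $\mathfrak{S}_n\times\mathfrak{S}_n$-action on top homology of a rank-selected subposet of $\tilde B_n$ (\Cref{thm: representation theoretic model for Phi(s_R)}), though only in the ribbon case. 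So: correct instincts for the tractable cases, but no proof of the conjecture as stated, and the general-case plan as written would fail for the two reasons above.
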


In the present work, we study a strengthening of Sokal's conjecture along several axes. First, we allow products of Jacobi--Trudi matrices indexed by different skew shapes. In other words, we consider the Hadamard product of an arbitrary minor of $M(\vec{x})$ with an arbitrary minor of $M(\vec{y})$. Second, we consider Hadamard products of $k$ Jacobi--Trudi matrices, rather than just two. Lastly, we generalize the determinant to an arbitrary \textit{Temperley--Lieb immanant}, which is a certain generalization of the determinant introduced by Rhoades and Skandera \cite{Rhoades2005}. 

\begin{conj}\label{conj: stronger sokal's conjecture}
  Given any Temperley--Lieb immanant $\imm_{\tau}$ and any family of
  skew shapes
  $\lambda^{(1)} / \mu^{(1)}, \ldots, \lambda^{(k)} / \mu^{(k)}$, the
  multi-symmetric function
    \[\imm_\tau \left(
    \tJT_{\lambda^{(1)} / \mu^{(1)}} (\vec{x}^{(1)}) 
    * \cdots * 
    \tJT_{\lambda^{(k)} / \mu^{(k)}} (\vec{x}^{(k)}) 
    \right)\]
    is monomial positive.
\end{conj}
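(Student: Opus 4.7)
The plan is to attack the general conjecture by extending the combinatorial framework developed in the paper for ribbons, via a sign-reversing involution on tuples of colored lattice path families. The ribbon case supplies the key positivity input, and the general case should follow from a jeu-de-taquin--style decomposition together with careful tracking of signs coming from the Temperley--Lieb coefficients.

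First, I would use the Rhoades--Skandera presentation
\[
  \imm_\tau(A) = \sum_{\sigma \in \sn} f_\tau(\sigma)\, a_{1,\sigma(1)} \cdots a_{n,\sigma(n)},
\]
with $f_\tau(\sigma) \in \ZZ$ determined by the planar structure of $\tau$, and substitute $A = \tJT_{\lambda^{(1)}/\mu^{(1)}}(\vec{x}^{(1)}) * \cdots * \tJT_{\lambda^{(k)}/\mu^{(k)}}(\vec{x}^{(k)})$ to obtain a signed sum over pairs $(\sigma, \vec{T})$, where $\vec{T} = (T^{(1)}, \ldots, T^{(k)})$ is a $k$-tuple of vertical-strip fillings realizing the product of elementary symmetric functions. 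Second, via the Lindström--Gessel--Viennot lemma applied color by color, I would recast $\vec{T}$ as a $k$-tuple of (possibly crossing) lattice path families, each living in its own grid. Third, I would construct a sign-reversing, weight-preserving involution $\iota$ on configurations with $f_\tau(\sigma) < 0$ by locating the lexicographically first color-indexed crossing and performing the classical tail-swap in that color; the surviving fixed points should then form a manifestly positive combinatorial index set, recovering (and in the ribbon case refining) the explicit Schur expansion given in the paper.

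The main obstacle is showing that the single-color tail-swap actually negates $f_\tau(\sigma)$: for the classical determinant case ($\tau$ the reverse pairing) this holds by multiplicativity of $\sgn$, but for an arbitrary TL immanant the support of $f_\tau$ is constrained by Temperley--Lieb planarity, and a naive swap may move $\sigma$ out of this support. For ribbons the path families are forced into a narrow diagonal strip, making the swap local and innocuous, as exploited in the paper; for general skew shapes, different colors may impose mutually incompatible crossing patterns, so one must refine the priority rule --- perhaps by cycling through colors in an order dictated by $\tau$'s planar pairing --- or invoke a global jeu-de-taquin rectification to match $\tau$-compatible permutations across colors. A parallel avenue is representation-theoretic: extend the module construction of the ribbon case by building a module of a coloured Temperley--Lieb--type algebra acting on a tensor product of ribbon modules corresponding to a slicing of each $\lambda^{(j)}/\mu^{(j)}$, so that its graded Frobenius characteristic realizes the multi-symmetric function directly and positively, thereby sidestepping the explicit involution altogether.
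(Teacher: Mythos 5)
The statement you are trying to prove is \Cref{conj: stronger sokal's conjecture}, which the paper leaves \emph{open}: the paper establishes it (indeed with Schur positivity) only when every $\lambda^{(i)}/\mu^{(i)}$ is $(3\times 2)$-avoiding (\Cref{thm: TL pos for 3x2 avoiding}, \Cref{thm: TL multinetwork expansion}), and slightly more generally for essentially $(3\times 2)$-avoiding collections (\Cref{cor:essential-implies-s-pos}). Your submission is a research plan rather than a proof: the step you yourself flag as ``the main obstacle'' --- that the single-color tail-swap negates $f_\tau(\sigma)$ --- is exactly the unproved content, and it is not a technical detail but the whole difficulty.

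More seriously, the specific device you propose is ruled out by the paper's own analysis. A tail-swap at a crossing in one color changes the pairing of sources to sinks but not the multiset of edges used in that color, so your involution $\iota$ necessarily acts within the fiber of path-family tuples covering a fixed multinetwork $\vec{H}$; its existence would force $f_\tau(\beta(\vec{H}))\ge 0$ for every $\vec{H}$. But \Cref{prop: counterexample} shows that for any collection of connected skew shapes that is \emph{not} essentially $(3\times 2)$-avoiding there is a multinetwork with $\beta(\vec{H}) = 1+\s_a+\s_{a+1}$, hence $f_1(\beta(\vec{H}))=\sgn(\beta(\vec{H}))=-1$: already for the determinant (the simplest Temperley--Lieb immanant), the negative-type covering tuples strictly outnumber the positive-type ones, so no sign-reversing injection --- a fortiori no involution --- exists inside that fiber. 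Any successful sign-reversing argument must therefore move mass between different edge multisets, which a ``lexicographically first crossing, swap tails in that color'' rule does not do; refining the priority order of colors cannot repair this, since the obstruction lives within a single fixed $\vec{H}$. Note also that in the cases the paper does handle, positivity is obtained not by an involution on tuples of path families but by computing $\beta(\vec{H})$ exactly as a $\star$-product, $\beta(\vec H)=2^{\epsilon(\vec H)}\B{I_1\cap\cdots\cap I_k}$ (\Cref{lem: beta for multinetworks}), which manifestly has nonnegative $f_\tau$-evaluations; and the representation-theoretic construction exists only for Hadamard powers of a single ribbon (\Cref{thm: representation theoretic model for Phi(s_R)}), so your proposed colored Temperley--Lieb module is likewise unsubstantiated.
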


While \Cref{conj: Sokal's conjecture} and our stronger \Cref{conj: stronger sokal's conjecture} remain open, we prove that the conjectures hold in the special case when each $\lambda^{(i)} / \mu^{(i)}$ is a skew shape not containing a $3 \times 2$ block of cells. In fact, in this scenario, we demonstrate that the expression in \Cref{conj: stronger sokal's conjecture} is Schur positive.

\begin{thm}\label{thm: TL pos for 3x2 avoiding}
Suppose $\lambda^{(1)} / \mu^{(1)}, \ldots, \lambda^{(k)} / \mu^{(k)}$ is a collection of skew shapes each not containing a $3 \times 2$ block of cells. Then for any Temperley--Lieb immanant $\imm_{\tau}$, the multi-symmetric function
\[\imm_{\tau} \left(\tJT_{\lambda^{(1)} / \mu^{(1)}} (\vec{x}^{(1)}) 
* \cdots * 
\tJT_{\lambda^{(k)} / \mu^{(k)}} (\vec{x}^{(k)}) \right)\]
is Schur positive.
\end{thm}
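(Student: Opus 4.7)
The plan is to combine the Rhoades--Skandera interpretation of Temperley--Lieb immanants with an explicit combinatorial model adapted to the $3\times 2$-avoiding condition, and then use an inductive reduction from the ribbon case (no $2\times 2$ block) to the general $3\times 2$-avoiding case.

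First, by the Lindstr\"om--Gessel--Viennot lemma each matrix $\tJT_{\lambda^{(l)}/\mu^{(l)}}(\vec{x}^{(l)})$ is realized as the path matrix of a planar network $N_l$ whose paths are weighted by monomials in $\vec{x}^{(l)}$. The Hadamard product of such matrices is then naturally encoded by $k$ parallel networks with shared source/sink rows, so that its $(i,j)$-entry enumerates $k$-tuples of paths $(P_1,\dots,P_k)$ with $P_l\colon s_i\to t_j$ in $N_l$. Applying Rhoades--Skandera, $\imm_\tau$ of this Hadamard product becomes a sum over $k$-tuples of non-crossing path systems weighted according to the Temperley--Lieb basis element $\tau$.

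Next, I would exploit the $3\times 2$-avoidance to constrain the geometry of each $N_l$: between any two consecutive horizontal levels the paths traverse strips of bounded height, so every non-crossing configuration admits a canonical local decomposition. Using Pieri-type identities, this decomposition rewrites the weight generating function of the configurations compatible with $\tau$ as a positive sum of products of Schur functions, one factor per $\vec{x}^{(l)}$. For ribbon shapes (no $2\times 2$ block) this step recovers the manifestly positive expansion advertised in the abstract, realized in terms of tuples of tableaux with compatible descent data and promoted to an $S_n$-module via the representation-theoretic construction announced for the ribbon case.

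To pass from the ribbon case to the full $3\times 2$-avoiding case, I would induct on the number of $2\times 2$ blocks present in the union of the $\lambda^{(l)}/\mu^{(l)}$. Peeling off one such block contributes a local factor of the form $e_a(\vec{x}^{(l)})\,e_b(\vec{x}^{(l)})$ to the entries it covers, and this factor has its own positive Schur expansion which can be absorbed into the configuration without disturbing the non-crossing structure imposed by $\tau$. Equivalently, one may embed each $\lambda^{(l)}/\mu^{(l)}$ into a minimal enveloping ribbon and recover the target immanant by restricting the ribbon-case expansion, using that restriction of an $S_n$-module is automatic preservation of Schur positivity.

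The main obstacle is verifying that the Rhoades--Skandera signed sum really does collapse into a positive combination of Schur functions: the TL weighting produces cancellations that must be controlled globally, and $3\times 2$-avoidance is effectively the tightest condition under which these cancellations can be matched with tableau-based data. In the single-matrix case $k=1$ this is known via dual equivalence, but the multi-variable Hadamard setting introduces genuinely new combinatorics because the paths in different networks $N_l$ share only the indexing of source/sink pairs while their interiors evolve independently. The bulk of the technical work is to show that the local Pieri expansions in the individual $N_l$ splice together into a single global positive combination, and that the inductive step of attaching a $2\times 2$ block preserves this positivity on the nose; the ribbon-case representation then serves as the anchor of the induction.
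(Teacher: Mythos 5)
There is a genuine gap. The heart of the paper's argument is an algebraic factorization that your proposal never supplies: for a $(3\times 2)$-avoiding shape, every $(\lambda,\mu)$-subnetwork $H$ is a generalized wiring diagram whose path-family generating function collapses to $\beta(H)=2^{\epsilon(H)}\prod_{i\in I}(1+\s_i)=2^{\epsilon(H)}\B{I}$ (because no three consecutive paths can share a vertex and the essential intersection points admit a compatible ordering with weakly increasing intersection sequence), and then the diagonal product in $\CC[\sn]$ coming from the Hadamard structure satisfies $\B{I_1}\star\cdots\star\B{I_k}=\B{I_1\cap\cdots\cap I_k}$. This is exactly what controls the Temperley--Lieb cancellations: it shows $f_\tau(\beta(\vec H))\ge 0$, gives $m$-positivity, and, more importantly for Schur positivity, lets the immanant factor as a sum over tuples $(I_1,\dots,I_k)$ with $I_1\cap\cdots\cap I_k=I$ of products $\prod_i \imm_{\T{I_i}}\bigl(\tJT_{\lambda^{(i)}/\mu^{(i)}}(\vec x^{(i)})\bigr)$, each factor being Schur positive by Rhoades--Skandera (via Kazhdan--Lusztig immanants and Haiman, not dual equivalence). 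Your proposal acknowledges that controlling these cancellations is ``the main obstacle'' but offers only unspecified ``Pieri-type identities'' and a ``canonical local decomposition,'' so the central step is asserted rather than proved.

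The inductive reduction from ribbons to general $(3\times 2)$-avoiding shapes would also fail as described. Peeling off a $2\times 2$ block does not contribute a clean local factor $e_a(\vec x^{(l)})e_b(\vec x^{(l)})$ to the immanant: immanants of Jacobi--Trudi matrices do not factor along such local surgeries of the shape. Likewise, embedding each shape into a ``minimal enveloping ribbon'' and ``restricting'' the ribbon-case expansion is not a well-defined operation --- the sizes (hence the symmetric groups and the degrees of the multi-symmetric functions) change, and there is no restriction map on Schur expansions that automatically preserves positivity. In the paper the logic runs the other way: the $(3\times 2)$-avoiding case is proved directly by the $\beta$-factorization above together with the known (non-explicit) Schur positivity of single-matrix Temperley--Lieb immanants, and only afterwards, in the strictly smaller ribbon case, is the expansion made explicit via descent sets and RSK. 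So anchoring an induction at the ribbon case cannot yield the general theorem without a new argument replacing the missing factorization.
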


We focus on this restricted class of skew shapes for three key reasons. First, \Cref{conj: Sokal's conjecture} and \Cref{conj: stronger sokal's conjecture} will not always extend to Schur positivity if the skew shapes do not all avoid a $3 \times 2$ block. For example, the expression $\det(\tJT_{(2,2,2)}(\vec{x}) * \tJT_{(2,2,2)}(\vec{y}))$ is not Schur positive.

Second, the proof of \Cref{thm: TL pos for 3x2 avoiding} involves techniques introduced by Rhoades and Skandera in \cite{Rhoades2005}. Said techniques can be viewed as a generalization of the sign-reversing involution normally used to prove the Lindstr\"{o}m--Gessel--Viennot lemma. 
However, we demonstrate that a similar sign-reversing involution argument cannot work to prove \Cref{conj: Sokal's conjecture} or \Cref{conj: stronger sokal's conjecture} when the collection of skew shapes is not \textit{essentially $(3 \times 2)$-avoiding}, a mild generalization of the condition in \Cref{thm: TL pos for 3x2 avoiding}. See \Cref{prop: counterexample} and the ensuing discussion.

Third, skew shapes avoiding a $3 \times 2$ block generalize the well-known \emph{ribbons}, which are skew shapes avoiding a $2 \times 2$ block. In the special case when each $\lambda^{(i)} / \mu^{(i)}$ is a ribbon, we can write the Schur expansion from \Cref{thm: TL pos for 3x2 avoiding} explicitly in terms of standard Young tableaux with restricted descent sets.
See \Cref{thm: explicit ribbon expansion in full generality}.

In particular, when taking the Hadamard product of a ribbon Jacobi--Trudi matrix with
itself, we obtain the following the following explicit formula.

\begin{thm}\label{thm: Schur expansion of JT_R*JT_R}
Let \( R \) be a ribbon of size \( m \). Then we have a manifestly positive Schur expansion
\[
    \det\left( \tJT_R(\vec x) * \tJT_R(\vec y) \right) = \sum_{\lambda, \mu \vdash m} \sum_{\substack{I, J \subseteq [m-1] \\ I \cap J = \Des(R')}} f^{\lambda}(I) f^{\mu}(J) \, s_\lambda(\vec x) s_\mu(\vec y),
  \]
  where \( s_\lambda \) is the Schur function, \( f^{\lambda}(I) \)
  is the number of standard Young tableaux of shape \( \lambda \)
  with descent set \( I \), and \( \Des(R) \) is the descent set of
  \( R \).
\end{thm}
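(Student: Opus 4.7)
The plan is to exploit the upper Hessenberg structure of $\tJT_R$ when $R$ is a ribbon and reduce the identity to an elementary M\"obius inversion on the Boolean lattice of descent subsets of $[m-1]$.

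For a ribbon $R = \lambda/\mu$ with $\ell$ rows of sizes $(r_1, \ldots, r_\ell)$ from top to bottom, I would first observe that the relation $\mu_j = \lambda_{j+1} - 1$ (valid for a ribbon, with the convention $\lambda_{\ell+1} = 1$) implies that $\tJT_R$ is upper Hessenberg: its subdiagonal entries are all $e_0 = 1$, entries strictly below the subdiagonal vanish, and the $(i,j)$-entry for $j \geq i$ equals $e_{r_i + \cdots + r_j}$. In the Leibniz expansion of the Hadamard determinant, a permutation $\sigma$ contributes only when $\sigma(i) \geq i - 1$ for all $i$, and such $\sigma$ are in bijection with subsets $S \subseteq [\ell-1]$ of ``cut positions'' that determine a block decomposition $[a_1,b_1], \ldots, [a_{|S|+1},b_{|S|+1}]$ of $[\ell]$; on each block $\sigma$ acts as a cyclic shift, contributing total sign $(-1)^{\ell - 1 - |S|}$ and product $\prod_j e_{r_{a_j} + \cdots + r_{b_j}}$. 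Reindexing $S$ by the subset $T = \{r_1 + \cdots + r_s : s \in S\}$ of $D_R := \Des(R) \subseteq [m-1]$ and writing $\alpha_T$ for the resulting coarsened composition, this yields
\[
\det\bigl(\tJT_R(\vec x) * \tJT_R(\vec y)\bigr) = \sum_{T \subseteq D_R} (-1)^{|D_R| - |T|}\, e_{\alpha_T}(\vec x)\, e_{\alpha_T}(\vec y).
\]

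Next, let $r_E$ denote the ribbon Schur function indexed by the composition of $m$ with descent set $E \subseteq [m-1]$; Gessel's formula gives $r_E = \sum_\lambda f^\lambda(E)\, s_\lambda$. Applying the involution $\omega$ to the classical identity $h_\alpha = \sum_{\beta \succeq \alpha} r_\beta$ (sum over coarsenings $\beta$ of $\alpha$) produces $e_{\alpha_T}(\vec x) = \sum_{E \supseteq [m-1] \setminus T} r_E(\vec x)$. Substituting into the previous display and interchanging summations, the coefficient of $r_E(\vec x)\, r_F(\vec y)$ becomes
\[
\sum_{T:\, (E\cap F)^c \subseteq T \subseteq D_R} (-1)^{|D_R| - |T|},
\]
where $(E\cap F)^c := [m-1] \setminus (E \cap F)$. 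Writing $T = (E\cap F)^c \sqcup T'$ with $T' \subseteq D_R \cap (E\cap F)$ and applying $\sum_{T' \subseteq X} (-1)^{|T'|} = [X = \emptyset]$, the sum vanishes unless $(E\cap F)^c \subseteq D_R$ and $D_R \cap (E\cap F) = \emptyset$, that is, unless $E \cap F = [m-1] \setminus D_R$. Because conjugating a ribbon interchanges horizontal and vertical steps, $\Des(R') = [m-1] \setminus \Des(R)$, so the nonvanishing condition is $E \cap F = \Des(R')$, in which case the remaining signs cancel to $1$. Expanding $r_E(\vec x)$ and $r_F(\vec y)$ via Gessel's formula then delivers the claimed Schur expansion.

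The main obstacle is the sign and indexing bookkeeping in the Hessenberg block decomposition, together with matching the paper's ribbon-descent convention to confirm that $\Des(R') = [m-1] \setminus \Des(R)$; once those are handled, the remainder of the argument is a short M\"obius inversion on the Boolean lattice of descent subsets.
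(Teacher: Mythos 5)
Your approach is correct in substance and genuinely different from the paper's. The paper obtains this theorem as the \(k=2\), determinant case of a general Temperley--Lieb immanant expansion for Hadamard products of ribbon Jacobi--Trudi matrices (\Cref{thm: explicit ribbon expansion in full generality}, \Cref{cor:ribbon power tableau expansion}, \Cref{cor:det power expansion}), proved via the Rhoades--Skandera subnetwork machinery, the bijection of \Cref{thm:subnetworks-to-words} from subnetworks to words with prescribed descent sets, and RSK. You instead use the Hessenberg structure of \(\tJT_R\) to get \(\det(\tJT_R(\vec x)*\tJT_R(\vec y)) = \sum_{T\subseteq \Des(R)} (-1)^{|\Des(R)|-|T|} e_{\comp(T)}(\vec x)\, e_{\comp(T)}(\vec y)\) (equivalently, this is \(\Phi_{\vec x,\vec y}(s_{R'})\) expanded via the classical \(e\)-expansion of a ribbon Schur function, the paper's \eqref{eq: s_R' e-expansion}), then apply \(h_\alpha=\sum r_\beta\), the involution \(\omega\), inclusion--exclusion on the Boolean lattice, and Gessel's \(r_E=\sum_\lambda f^\lambda(E)s_\lambda\). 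Your cycle analysis of Hessenberg permutations, the entry computation \(e_{r_a+\cdots+r_b}\), the identity \(e_{\comp(T)}=\sum_{E\supseteq[m-1]\setminus T} r_E\), and the M\"obius inversion all check out; the argument is shorter and self-contained for this theorem (and extends verbatim to \(k\) factors), though it does not recover the paper's more general results for arbitrary Temperley--Lieb immanants and distinct ribbons.

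One step is wrong as stated: under the paper's convention (cells labeled starting from the top-right cell), \(\Des(R')\neq[m-1]\setminus\Des(R)\) in general; rather \(\Des(R')=\{m-i : i\in[m-1]\setminus\Des(R)\}\), the \emph{reversal} of the complement. The paper is explicit about this: the ribbon \(\overline R\) with \(\Des(\overline R)=\NDes(R)\) is the \(180^\circ\) rotation of \(R'\), not \(R'\) itself. For instance, for the hook \(R=(3,1)\) one has \(\Des(R)=\{3\}\) and \(\NDes(R)=\{1,2\}\), while \(\Des(R')=\{2,3\}\). Consequently your computation literally proves the identity with the condition \(I\cap J=\NDes(R)\), not \(I\cap J=\Des(R')\). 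The theorem as stated is still reached, but you need one additional (easy) observation: a ribbon and its \(180^\circ\) rotation have the same skew Schur function, hence \(f^\lambda(A)=f^\lambda(\{m-i:i\in A\})\) for every \(A\subseteq[m-1]\); since reversal commutes with intersection, the map \((I,J)\mapsto(I^{\mathrm{rev}},J^{\mathrm{rev}})\) identifies the sum over \(I\cap J=\NDes(R)\) with the sum over \(I\cap J=\Des(R')\). Alternatively, replace \(R'\) by \(\overline R\) at the outset, exactly as the paper does in the proof of \Cref{cor:det power expansion}. With that one-line repair your proof is complete.
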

We explore the latter result further by constructing an
\( \mathfrak{S}_n \times \mathfrak{S}_n \)-module whose Frobenius
image equals the determinant in \Cref{thm: Schur expansion of
  JT_R*JT_R}, thereby establishing Schur positivity from the
representation-theoretic perspective. See \Cref{thm: representation theoretic model for Phi(s_R)}.

The rest of this paper is organized as follows. In Section~\ref{Sec:
  Preliminaries}, we review background material on multi-symmetric
functions, lattice paths, immanants, and the representation theory of products of symmetric groups. 
Section~\ref{Sec: main proofs for 3x2 avoiders} is devoted to the proofs of Theorems~\ref{thm: TL pos for 3x2 avoiding}~and~\ref{thm: Schur expansion of JT_R*JT_R}. 
Section \ref{Sec: rep theory construction} concerns the construction of the \( \mathfrak{S}_n \times \mathfrak{S}_n \)-module that realizes \Cref{thm: Schur expansion of JT_R*JT_R}. 
Finally, Section~\ref{Sec: Concluding remarks} discusses some open problems and
directions for future research.

\section*{Acknowledgments}
The authors are grateful to Yusra Naqvi, Alan Sokal, Richard Stanley, and Lauren Williams for helpful comments.
J. S. Kim was supported by the National Research Foundation of Korea (NRF) grant funded by the Korea government (MSIT) RS-2025-00557835.
J. Oh was supported by NRF grant MSIT NRF-2022R1C1C1010300, RS-2025-16067413 and KIAS Individual Grant (HP083401). 
R. Angarone and D. Soskin thank the American Institute of Mathematics for their support during the Theory and Applications of Total Positivity Workshop in 2023, during which these authors first had the chance to meet and discuss this work.

\section{Preliminaries}\label{Sec: Preliminaries}
In this section, we introduce basic definitions and notations. We
denote \( [a,b] = \{a,a+1,\dots,b\} \) and \( [n] = \{ 1,\dots,n\} \).
We use $\sn$ to denote the group of permutations of $[n]$, i.e., the
symmetric group on $n$ symbols. We use $\CC[\sn]$ to denote the group
algebra for $\sn$, and identify $1 \in \CC[\sn]$ with the identity
permutation.

For $n_1 < n_2$, we regard $\mathfrak{S}_{n_1} \subset \mathfrak{S}_{n_2}$ by considering any permutation of $[n_1]$ as a permutation of $[n_2]$ where every element in $[n_1 + 1, n_2]$ is a fixed point.
We let $\s_i \in \sn$ denote the simple transposition $(i, i+1)$. 
By convention, we compose permutations from left to right, so that, for example, $\s_1\s_2=312$ in one-line notation for \( n=3 \).

\subsection{Multi-symmetric functions}

A \emph{partition} is a sequence
\( \lambda = (\lambda_1, \dots, \lambda_\ell) \) of positive integers
arranged in nonincreasing order. Each \( \lambda_i \) is called a
\emph{part} of \( \lambda \). The \emph{size} \( |\lambda| \) of
$\lambda$ is defined by
\( |\lambda|=\lambda_1 + \cdots + \lambda_\ell \). The \emph{length}
\( \ell(\lambda) \) of \( \lambda \) is defined by
\( \ell(\lambda) = \ell \). We use the standard convention that
\( \lambda_i=0 \) for \( i>\ell(\lambda) \). If a partition
\( \lambda \) has size \( n \), we write \( \lambda \vdash n \).
Denote the set of all partitions by \( \Par \). The \emph{Young
  diagram} of \( \lambda \) is the set
\[
  \{(i,j)\in \ZZ^2: 1\le i\le \ell(\lambda), 1\le j\le \lambda_i\}.
\]
We will identify \( \lambda \) with its Young diagram. Each element
\( (i,j)\in \lambda \) is called a \emph{cell}. We visualize
\( \lambda \) as an array of left-justified square cells, where the
\( i \)-th row contains \( \lambda_i \) cells.
Accordingly, we may also call $\ell(\lambda)$ the \textit{number of rows} in $\lambda$.
For example,
the Young diagram of \( (4,3,1) \) is
\[
  \vcenter{\hbox{
      \scalebox{0.7}{
  \begin{ytableau}
    ~ & & &  \\
    ~ & & \\
    ~
  \end{ytableau}
}}}.
\]
The \emph{conjugate}
\( \lambda' \) of \( \lambda \) is the partition whose Young diagram
is \( \{(j,i): 1\le i\le \ell(\lambda), 1\le j\le \lambda_i\} \).

For partitions \( \lambda\) and \( \mu \), we write
\( \mu \subseteq \lambda \) if 
$\mu_i \leq \lambda_i$ for all $i$.
In this case, the \emph{skew
  shape} \( \lambda / \mu \) is the set-theoretic difference
\( \lambda-\mu \) of their Young diagrams, and \( |\lambda/\mu| \) is
the number of cells in \( \lambda / \mu \). 
The \emph{length} of a skew shape 
$\lambda / \mu$ is defined to be $\ell(\lambda)$.
The \emph{conjugate}
\( (\lambda/\mu)' \) of \( \lambda/\mu \) is defined to be
\( \lambda'/\mu' \).

A \emph{semistandard Young tableau} of shape \(\lambda/\mu\) is a
filling of \(\lambda/\mu\) with positive integers such that the
entries weakly increase from left to right in each row and strictly
increase from top to bottom in each column. We denote by
\( \SSYT(\lambda/\mu) \) the set of semistandard Young tableaux of
shape \( \lambda/\mu \). A \emph{standard Young tableau} of shape
\( \lambda/\mu \) is a semistandard Young tableau of shape
\( \lambda/\mu \) with a bijective filling, i.e., each of
\(1,2,\dots,|\lambda/\mu|\) appears exactly once. We denote by
\( \SYT(\lambda/\mu) \) the set of standard Young tableaux of shape
\( \lambda/\mu \). The \emph{descent set} \(\Des(T)\) of a standard
Young tableau \(T\) is defined as the set of indices \(i\) such that
the entry \(i+1\) appears in a row strictly below than that of \(i\).

Let \( \vec x= (x_1, x_2, \dots) \) be an infinite sequence of
variables. A \emph{symmetric function} in the variables \( \vec x \)
is a formal power series that is symmetric in the sense that for any
permutation \( \sigma \) of the variables,
\[
f(x_{\sigma(1)}, x_{\sigma(2)}, \dots) = f(x_1, x_2, \dots).
\]
Let \( \Lambda(\vec x) \) denote the graded ring of symmetric
functions in \( \vec x \) over \( \mathbb{Q} \). For a partition
\( \lambda = (\lambda_1,\dots,\lambda_\ell) \), the \emph{monomial
  symmetric function} \( m_\lambda \) is defined as
\[
    m_\lambda(\vec x) := \sum_{\alpha} ( x_1^{\alpha_1} x_2^{\alpha_2} \cdots),  
\]
where the summation is over all permutations
\( \alpha = (\alpha_1,\alpha_2,\dots) \) of
\( (\lambda_1,\dots,\lambda_\ell, 0, 0,\dots) \). The
\emph{homogeneous} and \emph{elementary} symmetric functions
associated with a partition \( \lambda \) are defined by
\[
    h_\lambda(\vec x) := h_{\lambda_1}(\vec x) h_{\lambda_2}(\vec x) \cdots \quad \text{and} \quad e_\lambda(\vec x) := e_{\lambda_1}(\vec x) e_{\lambda_2}(\vec x) \cdots,  
\]  
respectively, where
\[
    h_d(\vec x) := \sum_{i_1 \leq \cdots \leq i_d} x_{i_1} \cdots x_{i_d} \quad \text{and} \quad e_d(\vec x) := \sum_{i_1 < \cdots < i_d} x_{i_1} \cdots x_{i_d}.  
\]
We define \( h_0(\vec x) = e_0(\vec x)=1 \) and \( h_d(\vec x) = e_d(\vec x)=0 \) for \( d<0 \).
We will sometimes omit the variables \( \vec x \) and simply write
\( \Lambda \) and \( m_\lambda \), etc., when the choice of
\( \vec x \) is irrelevant.

One of the most prominent symmetric functions is the Schur function \( s_\lambda \). The \emph{(skew) Schur function} \( s_{\lambda/\mu} \) is defined via the (dual) Jacobi--Trudi identity:
\begin{equation}\label{eq: JT identity}
  s_{\lambda / \mu}(\vec x) = \left( e_{\lambda'_i - \mu'_j  - i+ j}(\vec x) \right)_{i,j=1}^{\ell(\lambda')}.
\end{equation} 
When \( \mu = \emptyset \), the function
\( s_\lambda(\vec x) := s_{\lambda/\emptyset}(\vec x) \) is referred
to as the \emph{Schur function} indexed by \( \lambda \).

We define \emph{(dual) Jacobi--Trudi matrices} \( \tJT_{\lambda / \mu} (\vec{x}) \) by
\[\tJT_{\lambda / \mu} (\vec{x}) :=  \left( e_{\lambda_i - \mu_j  - i+ j}(\vec x) \right)_{i,j=1}^{\ell(\lambda)}.\]
Note that, with this notation,
$\det \tJT_{\lambda / \mu} (\vec x) = s_{\lambda' / \mu'}(\vec x)$.

\begin{remark}\label{rmk: nonstandard convention}
Our definition of dual Jacobi--Trudi matrices
differs from the standard convention by a conjugation of $\lambda / \mu$.
When using the standard convention, the reader should take this change into account.
For example, results stated in terms of $(3 \times 2)$-avoiding skew shapes
would instead concern $(2 \times 3)$-avoiding skew shapes.
\end{remark}

Note that each \( \det \left(\tJT_{\lambda/\mu}(\vec x)\right) \) is a minor of the matrix
\( M(\vec x) = (e_{i-j}(\vec x))_{i,j\ge0} \) introduced earlier, 
and in fact all minors of $M(\vec x)$ arise this way.

Schur functions admit a combinatorial expression as a generating
function for semistandard Young tableaux:
\begin{equation}\label{eq: tableaux description}
    s_{\lambda/\mu}(\vec x) = \sum_{T \in \SSYT(\lambda/\mu)} \vec x_T,
\end{equation}
where \( \vec x_T = x_1^{a_1}x_2^{a_2}\cdots \), with \( a_i \) denoting the number of entries equal to \( i \) in \( T \).

Each of the families 
\[
\{ m_\lambda : \lambda \in \Par \}, \quad
\{ h_\lambda : \lambda \in \Par \}, \quad
\{ e_\lambda : \lambda \in \Par \}, \quad
\{ s_\lambda : \lambda \in \Par \}
\]
forms a basis for the space of symmetric functions \( \Lambda \). We
denote by \( \omega \) the standard involution on \( \Lambda \), which
sends \( h_\lambda \mapsto e_\lambda \).

In this paper, we consider the tensor product
\( \Lambda(\vec x^{(1)}) \otimes \cdots \otimes \Lambda(\vec x^{(k)})
\) of symmetric function rings for \( k\ge1 \). An element
\( f \in \Lambda(\vec x^{(1)}) \otimes \cdots \otimes \Lambda(\vec
x^{(k)}) \) is called a \emph{multi-symmetric function}. For
\( f^{(1)} \in \Lambda(\vec x^{(1)}),\dots, f^{(k)} \in \Lambda(\vec
x^{(k)}) \), we abuse notation by writing the multi-symmetric function
\( f^{(1)} \otimes \cdots \otimes f^{(k)} \) as
\( f^{(1)}(\vec x^{(1)})\cdots f^{(k)}(\vec x^{(k)}) \).
We extend the definition of the involution
\( \omega \) to
\[
  \omega: \Lambda(\vec x^{(1)}) \otimes \cdots \otimes \Lambda(\vec x^{(k)}) \to \Lambda(\vec x^{(1)}) \otimes \cdots \otimes \Lambda(\vec x^{(k)})
\]
by applying \(\omega\) to each tensor factor, that is,
\[
  \omega\left(h_{\lambda^{(1)}}(\vec x^{(1)}) \cdots h_{\lambda^{(k)}}(\vec x^{(k)}) \right) = e_{\lambda^{(1)}}(\vec x^{(1)}) \cdots e_{\lambda^{(k)}}(\vec x^{(k)}).
\]

Given a basis
\( \{ b_\lambda \} \) of \( \Lambda \), we say that a
multi-symmetric function
\( f(\vec x^{(1)}, \dots,\vec x^{(k)}) \in \Lambda(\vec x^{(1)})
\otimes \cdots \otimes \Lambda(\vec x^{(k)})\) is
\emph{\(b\)-positive} if it can be expressed positively in terms of
the tensor product basis, that is, if
\[
  f(\vec x^{(1)}, \dots,\vec x^{(k)})
  = \sum_{\lambda^{(1)}, \dots, \lambda^{(k)} \in \Par} c_{\lambda^{(1)},\dots,\lambda^{(k)}}
  b_{\lambda^{(1)}}(\vec x^{(1)}) \cdots b_{\lambda^{(k)}}(\vec x^{(k)}),
\]
then \( c_{\lambda^{(1)},\dots,\lambda^{(k)}} \geq 0 \) for all
\( \lambda^{(1)}, \dots, \lambda^{(k)} \in \Par \). 

Suppose we are given a list of matrices 
    \[ A^{(1)} = \left(a^{(1)}_{i,j}(\vec{x}^{(1)})\right)_{1\le i,j\le n} 
    \, , \,\,\, \dots \,\,\, , \,
    A^{(k)} = \left(a^{(k)}_{i,j}(\vec{x}^{(k)})\right)_{1\le i,j\le n}, \]
where the entries
$a^{(p)}_{i,j}(\vec{x}^{(p)})$ of $A^{(p)}$
are symmetric functions in \( \vec{x}^{(p)} \) for all $1 \leq p \leq k$.
We define the \emph{Hadamard product}
\( A^{(1)} * \cdots * A^{(k)} \) as the entrywise product:
\[
A^{(1)} * \cdots * A^{(k)} 
= \left(
a^{(1)}_{i,j}(\vec{x}^{(1)}) 
\cdots
a^{(k)}_{i,j}(\vec{x}^{(k)})
\right)_{1\le i,j\le n},
\]
whose entries lie in \( \Lambda(\vec{x}^{(1)}) \otimes \cdots \otimes \Lambda(\vec{x}^{(k)}) \).

In \Cref{Sec: main proofs for 3x2 avoiders}, we will consider Hadamard products of several Jacobi--Trudi matrices, possibly for skew shapes with different numbers of rows.
To that end, we define
\[\tJT_{\lambda / \mu}^n (\vec{x}) :=  \left( e_{\lambda_i - \mu_j  - i+ j}(\vec x) \right)_{i,j=1}^n.\]
Note that, for any \( m,n\ge \ell(\lambda) \), we have
\[
  \det  \left( \tJT_{\lambda / \mu}^n (\vec{x})  \right)  = \det  \left( \tJT_{\lambda / \mu}^m (\vec{x})  \right) ,
\]
which follows easily from the definition.
Hence, we write
\( \det \left( \tJT_{\lambda / \mu} (\vec{x}) \right) \)
to mean 
\( \det ( \tJT_{\lambda / \mu}^n (\vec{x}) ) \) for some \( n\ge \ell(\lambda) \).
More generally,
whenever \( m,n \ge \ell(\lambda^{(i)}) \) for all \( i \), we have
\[
\det \left(  
\tJT_{\lambda^{(1)} / \mu^{(1)}}^n (\vec{x}^{(1)}) 
* \cdots * 
\tJT_{\lambda^{(k)} / \mu^{(k)}}^n (\vec{x}^{(k)})  \right)
= \det \left(
\tJT_{\lambda^{(1)} / \mu^{(1)}}^m (\vec{x}^{(1)}) 
* \cdots * 
\tJT_{\lambda^{(k)} / \mu^{(k)}}^m (\vec{x}^{(k)}) \right).
\]
So, we define any of these expressions to be
\[
\det
\left(  \tJT_{\lambda^{(1)} / \mu^{(1)}} (\vec{x}^{(1)}) 
* \cdots * 
\tJT_{\lambda^{(k)} / \mu^{(k)}} (\vec{x}^{(k)})  \right)
\]
without risk of confusion. The same is true when the determinant is replaced with any immanant $\imm_f$; see \Cref{sec: imms and tl setup}.

\subsection{Lattice paths and networks}\label{sec:lattice paths and networks}

The key idea for the proofs of \Cref{thm: TL pos for 3x2 avoiding} and \Cref{thm: Schur expansion of JT_R*JT_R} is to interpret the entries of Jacobi--Trudi matrices as generating functions for paths in directed graphs, in the spirit of the Lindstr\"om--Gessel--Viennot lemma.

Let $\mathcal{L}$ be the directed graph with vertex set $\mathbb{Z}^2$
and edges of the form $(a,b) \to (a,b-1)$ and $(a,b) \to (a-1,b-1)$ for all $(a,b) \in \mathbb{Z}^2$.
We call these edges \textit{south} and \textit{southwest} steps, respectively.
We will refer to paths in $\mathcal{L}$ as \emph{(south-southwest) lattice paths}.
For $A,B \in \mathbb{Z}^2$, we write $p:A \to B$ to mean that $p$ is a south-southwest lattice path
that begins at $A$ and ends at $B$.
Suppose $p$ is a south-southwest lattice path
and $\vec{x} = (x_1, x_2, \ldots)$ is an infinite sequence of variables.
Then let $\wt_{\vec x}(p)$ denote the \textit{$\vec x$-weight} of $p$,
which is the product of $x_{b}$ for each southwest step $(a,b) \to (a-1,b-1)$ in $p$.
See \Cref{fig:path} for an example.

\begin{figure}
  \centering
  \begin{tikzpicture}[scale=0.5]
    \draw[help lines] (0,0) grid (7,5);
    \foreach \x in {0,...,7}
    \draw (\x,0) node[below] {\x};
    \foreach \y in {0,...,5}
    \draw (0,\y) node[left] {\y};
    \draw[line width = 1pt] (3,0) --++(0,1) --++(1,1)--++(0,1) --++(1,1)--++(1,1);
  \end{tikzpicture}
  \caption{A lattice path \( p:A \to B \) from \( A=(6,5) \) to \( B=(3,0) \)
  with \( \wt_{\vec x}(p) = x_5x_4x_2 \).}
  \label{fig:path}
\end{figure}

To model Jacobi--Trudi matrices, we allow certain infinite lattice paths.
We write $p:(a,\infty) \to B$ to mean that,
for some $N \in \mathbb{Z}$,
the path $p$ contains infinitely many south steps of the form $(a,M) \to (a,M-1)$ for $M > N$
followed by a finite south-southwest lattice path $p:(a,N) \to B$.

Suppose $\lambda / \mu$ is a skew shape with \( \ell(\lambda)\le n \).
Then, for each $1 \leq i \leq n$, define
    \[\begin{array}{ccc}
    A(\lambda)_i := (\lambda_i + n-i, \infty) & \text{and} & B(\mu)_i := (\mu_i + n-i, 0).
    \end{array}\]
Observe that
\[\sum_{p:A(\lambda)_i \to B(\mu)_j} \wt_{\vec x}(p) = e_{\lambda_i-\mu_j-i+j}(\vec x) = (\tJT^n_{\lambda/\mu}(\vec x))_{i,j}.\]
A \emph{$(\lambda, \mu)$-path family} is a tuple $\vec{p} = (p_1, \ldots, p_n)$ of lattice paths with $p_i:A(\lambda)_i \to B(\mu)_{\sigma(i)}$ for all $1 \leq i \leq n$ and some permutation $\sigma \in \mathfrak{S}_n$.
The permutation $\sigma$ is called the \emph{type} of $\vec{p}$ and is denoted $\type(\vec{p})$. The \emph{$\vec x$-weight} of a path family is the product of the $\vec x$-weights of the individual paths in $\vec{p}$, and is denoted $\wt_{\vec x}(\vec{p})$.
Two paths are said to be \emph{nonintersecting} if they have no vertices in common.
A path family is said to be \emph{nonintersecting} if its paths are pairwise nonintersecting.

With this notation in hand, we may express the determinant of a dual Jacobi--Trudi matrix as
    \begin{equation}
    \det \left( \tJT^n_{\lambda/\mu}(\vec x) \right) = \sum_{\sigma \in \mathfrak{S}_n} \sum_{ \type(\vec{p}) = \sigma} \sgn(\sigma) \wt_{\vec x}(\vec{p}), \label{eq: first det expansion}
    \end{equation}
where the inner sum ranges over all $(\lambda, \mu)$-path families $\vec{p}$.
Since the graph \( \mathcal{L} \) is planar, by the Lindstr\"om--Gessel--Viennot lemma, we have the following identity
in the case of dual Jacobi--Trudi matrices:
\begin{equation}\label{eq:LGV}
  \det \left( \tJT^n_{\lambda/\mu}(\vec x) \right) = \sum_{\vec{p}} \wt_{\vec x}(\vec{p}),
\end{equation}
where the sum ranges over all nonintersecting $(\lambda, \mu)$-path families $\vec{p}$.

As stated in the previous subsection, $\det ( \tJT^n_{\lambda/\mu}(\vec x) )$
does not depend on the choice of $n \geq \ell(\lambda)$. 
This can also be seen from \eqref{eq:LGV}: increasing the value of $n$ corresponds to appending vertical paths with trivial weight to each path family $\vec p$.
Accordingly, in what follows, we sometimes omit the parameter $n$ when discussing $(\lambda, \mu)$-path families.

\subsection{Temperley--Lieb Immanants}\label{sec: imms and tl setup}

Given a function $f: \mathfrak{S}_n \rightarrow \CC$ and an $n \times n$ matrix $M$, the \emph{$f$-immanant} of $M$ is
\begin{equation}\label{eq:immdef}
\imm_{f}(M) := \sum_{w \in \sn} f(w) M_{1, w(1)} \cdots M_{1, w(n)}.
\end{equation}
One particularly well-studied family of immanants are \emph{ordinary immanants}. 
Ordinary immanants are obtained by setting \( f=  \chi^\lambda \), which is the
irreducible character of \( \mathfrak{S}_n \) associated to the partition $\lambda \vdash n$. 
Notably, the ordinary immanants with \( f=\chi^{(1^n)} \) and \( f=\chi^{(n)} \) coincide with
the determinant and the permanent, respectively.

The study of immanants has significantly expanded the theory of
determinants. They have been investigated in contexts such as
semi-definite Hermitian matrices \cite{Schur1918}, totally positive
matrices \cite{Stembridge1991}, and Jacobi--Trudi matrices. Goulden
and Jackson \cite{Goulden1992a} initiated the study of immanants of
Jacobi--Trudi matrices. They conjectured that ordinary immanants of
Jacobi--Trudi matrices are monomial positive. This conjecture was later
confirmed by Greene \cite{Greene1992}.

In addition to ordinary immanants, \emph{Temperley--Lieb immanants} have been studied for Jacobi--Trudi matrices. These immanants were introduced by Rhoades and Skandera \cite{Rhoades2005} and can be defined via the combinatorics of nonintersecting matchings. Much like ordinary immanants, they include the determinant as a special case. 
In \cite{RHOADES2006793}, Rhoades and Skandera showed the Schur positivity of Temperley--Lieb immanants of generalized Jacobi--Trudi matrices. 
They did this by demonstrating that Temperley--Lieb immanants are a subclass of \textit{Kazhdan--Lusztig immanants} \cite[Proposition 5]{RHOADES2006793}, 
whose Schur positivity follows from the work of Haiman \cite[Theorem 1.5]{Haiman1993}.
In their recent work \cite{Nguyen2025}, Nguyen and Pylyavskyy used crystal operators to provide a combinatorial interpretation for the coefficients of Temperley--Lieb immanants of Jacobi--Trudi matrices when expanded in the Schur basis.

We now present the basic definitions underlying Temperley--Lieb immanants. 
Given a formal parameter $\xi$, the \emph{Temperley--Lieb algebra} $\TL_n(\xi)$ is the associative $\CC$-algebra generated by
$t_1,\dotsc,t_{n-1}$ and subject to the relations
\begin{alignat*}{2}
t_i^2 &= \xi t_i, &\qquad &\text{for } i=1,\dotsc,n-1, \\
t_i t_j t_i &= t_i,   &\qquad &\text{if }  |i-j|=1,\\
t_i t_j &= t_j t_i,   &\qquad &\text{if }  |i-j| \geq 2.
\end{alignat*}

In the remainder of the paper, we assume \( \xi=2 \). In this case, there is an isomorphism \cite{FanGreen} \[\TL_n(2) \cong \csn/(1 + \s_1 + \s_2 + \s_1\s_2 + \s_2\s_1 + \s_1\s_2\s_1).\]
Specifically, the isomorphism is given by the algebra homomorphism
    \begin{equation}\label{eq:sntotn}
    \begin{aligned}
    \theta : \csn \rightarrow \TL_n(2) ~\mbox{ with }~ 
    \theta(\s_i) = t_i - 1.
    \end{aligned}
    \end{equation}
    
One may interpret multiplication in $\TL_n(2)$ visually. 
To each algebra generator, we associate a \emph{Kauffman diagram}, which is an (undirected) graph with $2n$ vertices arranged in an $n \times 2$ grid and $n$ nonintersecting edges, such that each edge lies in the convex hull of the $2n$ vertices.
To the generator $t_i$, we associate the Kauffman diagram in which the $i$th and $(i+1)$st points from the bottom are connected by an arc on either side, and all other arcs go horizontally from left to right. The multiplicative identity is associated with the diagram in which all arcs go horizontally from left to right. Examples with $n=4$ are given in \Cref{fig: TL gens example}.

\begin{figure}
\centering
\[
\begin{tikzpicture}[x=0.2cm,y=0.5cm,thick]
  \foreach \i/\y in {1/0, 2/1.2, 3/2.4, 4/3.6} {
    \draw (0,\y) -- (4,\y);
    \fill (0,\y) circle (2pt) node[left] {\scriptsize $A_{\i}$};
    \fill (4,\y) circle (2pt) node[right] {\scriptsize $B_{\i}$};
  }
  \node at (2,-0.8) {\scriptsize $1$};

  \begin{scope}[xshift=3.8cm]
    \foreach \i/\y in {1/0, 2/1.2} {
      \fill (0,\y) circle (2pt) node[left] { \scriptsize $A_{\i}$};
      \fill (4,\y) circle (2pt) node[right] { \scriptsize $B_{\i}$};
    }
    \foreach \i/\y in {3/2.4, 4/3.6} {
      \draw (0,\y) -- (4,\y);
      \fill (0,\y) circle (2pt) node[left] { \scriptsize $A_{\i}$};
      \fill (4,\y) circle (2pt) node[right] { \scriptsize $B_{\i}$};
    }
    \draw (0,0) .. controls (0.8,0.6) .. (0,1.2);
    \draw (4,0) .. controls (3.2,0.6) .. (4,1.2);
    \node at (2,-0.8) {\scriptsize $t_1$};
  \end{scope}

  \begin{scope}[xshift=7.6cm]
    \draw (0,0) -- (4,0);
    \fill (0,0) circle (2pt) node[left] { \scriptsize $A_1$};
    \fill (4,0) circle (2pt) node[right] { \scriptsize $B_1$};
    \fill (0,1.2) circle (2pt) node[left] { \scriptsize $A_2$};
    \fill (4,1.2) circle (2pt) node[right] { \scriptsize $B_2$};
    \fill (0,2.4) circle (2pt) node[left] { \scriptsize $A_3$};
    \fill (4,2.4) circle (2pt) node[right] { \scriptsize $B_3$};
    \draw (0,3.6) -- (4,3.6);
    \fill (0,3.6) circle (2pt) node[left] { \scriptsize $A_4$};
    \fill (4,3.6) circle (2pt) node[right] { \scriptsize $B_4$};
    \draw (0,1.2) .. controls (0.8,1.8) .. (0,2.4);
    \draw (4,1.2) .. controls (3.2,1.8) .. (4,2.4);
    \node at (2,-0.8) {\scriptsize $t_2$};
  \end{scope}

  \begin{scope}[xshift=11.4cm]
    \draw (0,0) -- (4,0);
    \draw (0,1.2) -- (4,1.2);
    \foreach \i/\y in {1/0, 2/1.2, 3/2.4, 4/3.6} {
      \fill (0,\y) circle (2pt) node[left] { \scriptsize $A_{\i}$};
      \fill (4,\y) circle (2pt) node[right] { \scriptsize $B_{\i}$};
    }
    \draw (0,2.4) .. controls (0.8,3.0) .. (0,3.6);
    \draw (4,2.4) .. controls (3.2,3.0) .. (4,3.6);
    \node at (2,-0.8) {\scriptsize $t_3$};
  \end{scope}
\end{tikzpicture}
\]
\caption{Multiplicative generators for $\TL_4(2)$.}
\label{fig: TL gens example}
\end{figure}
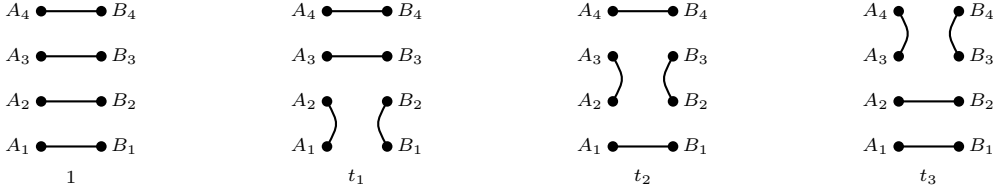

To multiply Kauffman diagrams, we concatenate them by identifying the right $n$ points of one diagram with the left $n$ points of the next. We remove each loop formed in this process and multiply the result by $2$ for each loop removed. An example with $n=4$ is given in \Cref{fig: example of multiplication in TL}. As is apparent from the visual interpretation of multiplication, the algebra $\TL_n(2)$ is finite-dimensional, with a basis $\K_n$ labeled by all possible Kauffman diagrams. It is known that $|\K_n|$ is the $n$th Catalan number $C_n = \frac{1}{n+1}\tbinom{2n}{n}$.

\begin{figure}
\centering
\begin{tikzpicture}[x=0.2cm,y=0.5cm,thick]

\begin{scope}[xshift=0cm]
  \foreach \y in {0,1.2} { \fill (0,\y) circle (2pt); \fill (4,\y) circle (2pt); }
  \foreach \y in {2.4,3.6} { \draw (0,\y) -- (4,\y); \fill (0,\y) circle (2pt); \fill (4,\y) circle (2pt); }
  \draw (0,0) .. controls (0.8,0.6) .. (0,1.2);
  \draw (4,0) .. controls (3.2,0.6) .. (4,1.2);
  \node at (2,-0.8) {\scriptsize $t_1$};
\end{scope}

\begin{scope}[xshift=1.2cm]
  \draw (0,0) -- (4,0);
  \foreach \y in {0,1.2,2.4,3.6} { \fill (0,\y) circle (2pt); \fill (4,\y) circle (2pt); }
  \draw (0,1.2) .. controls (0.8,1.8) .. (0,2.4);
  \draw (4,1.2) .. controls (3.2,1.8) .. (4,2.4);
  \draw (0,3.6) -- (4,3.6);
  \node at (2,-0.8) {\scriptsize $t_2$};
\end{scope}

\begin{scope}[xshift=2.4cm]
  \foreach \y in {0,1.2} { \fill (0,\y) circle (2pt); \fill (4,\y) circle (2pt); }
  \foreach \y in {2.4,3.6} { \draw (0,\y) -- (4,\y); \fill (0,\y) circle (2pt); \fill (4,\y) circle (2pt); }
  \draw (0,0) .. controls (0.8,0.6) .. (0,1.2);
  \draw (4,0) .. controls (3.2,0.6) .. (4,1.2);
  \node at (2,-0.8) {\scriptsize $t_1$};
\end{scope}

\begin{scope}[xshift=3.6cm]
  \foreach \y in {0,1.2} { \fill (0,\y) circle (2pt); \fill (4,\y) circle (2pt); }
  \foreach \y in {2.4,3.6} { \draw (0,\y) -- (4,\y); \fill (0,\y) circle (2pt); \fill (4,\y) circle (2pt); }
  \draw (0,0) .. controls (0.8,0.6) .. (0,1.2);
  \draw (4,0) .. controls (3.2,0.6) .. (4,1.2);
  \node at (2,-0.8) {\scriptsize $t_1$};
\end{scope}

\begin{scope}[xshift=4.8cm]
  \draw (0,0) -- (4,0);
  \draw (0,1.2) -- (4,1.2);
  \foreach \y in {0,1.2,2.4,3.6} { \fill (0,\y) circle (2pt); \fill (4,\y) circle (2pt); }
  \draw (0,2.4) .. controls (0.8,3.0) .. (0,3.6);
  \draw (4,2.4) .. controls (3.2,3.0) .. (4,3.6);
  \node at (2,-0.8) {\scriptsize $t_3$};
\end{scope}

\node at (31,1.8) {\large $=$};
\node at (60,1.8) {\large $= 2$};

\begin{scope}[xshift=7cm]
  \begin{scope}[xshift=0cm]
    \foreach \y in {0,1.2} { \fill (0,\y) circle (2pt); \fill (4,\y) circle (2pt); }
    \foreach \y in {2.4,3.6} { \draw (0,\y) -- (4,\y); \fill (0,\y) circle (2pt); \fill (4,\y) circle (2pt); }
    \draw (0,0) .. controls (0.8,0.6) .. (0,1.2);
    \draw (4,0) .. controls (3.2,0.6) .. (4,1.2);
    \node at (2,-0.8) {\scriptsize $t_1$};
  \end{scope}

  \begin{scope}[xshift=0.8cm]
    \draw (0,0) -- (4,0);
    \foreach \y in {0,1.2,2.4,3.6} { \fill (0,\y) circle (2pt); \fill (4,\y) circle (2pt); }
    \draw (0,1.2) .. controls (0.8,1.8) .. (0,2.4);
    \draw (4,1.2) .. controls (3.2,1.8) .. (4,2.4);
    \draw (0,3.6) -- (4,3.6);
    \node at (2,-0.8) {\scriptsize $t_2$};
  \end{scope}

  \begin{scope}[xshift=1.6cm]
    \foreach \y in {0,1.2} { \fill (0,\y) circle (2pt); \fill (4,\y) circle (2pt); }
    \foreach \y in {2.4,3.6} { \draw (0,\y) -- (4,\y); \fill (0,\y) circle (2pt); \fill (4,\y) circle (2pt); }
    \draw (0,0) .. controls (0.8,0.6) .. (0,1.2);
    \draw (4,0) .. controls (3.2,0.6) .. (4,1.2);
    \node at (2,-0.8) {\scriptsize $t_1$};
  \end{scope}

  \begin{scope}[xshift=2.4cm]
    \foreach \y in {0,1.2} { \fill (0,\y) circle (2pt); \fill (4,\y) circle (2pt); }
    \foreach \y in {2.4,3.6} { \draw (0,\y) -- (4,\y); \fill (0,\y) circle (2pt); \fill (4,\y) circle (2pt); }
    \draw (0,0) .. controls (0.8,0.6) .. (0,1.2);
    \draw (4,0) .. controls (3.2,0.6) .. (4,1.2);
    \node at (2,-0.8) {\scriptsize $t_1$};
  \end{scope}

  \begin{scope}[xshift=3.2cm]
    \draw (0,0) -- (4,0);
    \draw (0,1.2) -- (4,1.2);
    \foreach \y in {0,1.2,2.4,3.6} { \fill (0,\y) circle (2pt); \fill (4,\y) circle (2pt); }
    \draw (0,2.4) .. controls (0.8,3.0) .. (0,3.6);
    \draw (4,2.4) .. controls (3.2,3.0) .. (4,3.6);
    \node at (2,-0.8) {\scriptsize $t_3$};
  \end{scope}
\end{scope}

\begin{scope}[xshift=12.7cm]
  \foreach \y in {0,1.2} { \fill (0,\y) circle (2pt); \fill (4,\y) circle (2pt); }
  \foreach \y in {2.4,3.6} { \fill (0,\y) circle (2pt); \fill (4,\y) circle (2pt); }
  \draw (0,0) .. controls (0.8,0.6) .. (0,1.2);
  \draw (4,0) .. controls (3.2,0.6) .. (4,1.2);
  \draw (0,2.4) .. controls (0.8,3.0) .. (0,3.6);
  \draw (4,2.4) .. controls (3.2,3.0) .. (4,3.6);
  \node at (2,-0.8) {\scriptsize $t_1t_3$};
\end{scope}
\end{tikzpicture}
\caption{Concatenation of Kauffman diagrams for the product $t_1t_2t_1t_1t_3=2t_1t_3$.}
\label{fig: example of multiplication in TL}
\end{figure}
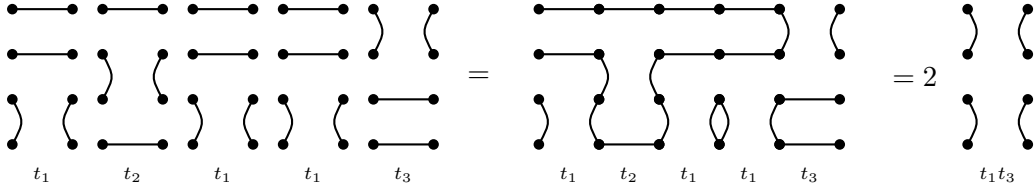

For each $\tau \in \K_n$, the function $f_\tau:\CC[\sn] \to \CC$ is defined via
\[f_\tau(w) = \text{ the coefficient of $\tau$ in } \theta(w)\]
and extended linearly.
Given $\tau \in \K_n$, the \emph{Temperley--Lieb immanant} for $\tau$ is
the \( f \)-immanent \( \imm_f \), where $f=f_\tau$. To economize notation, we will typically write $\imm_{\tau}$ instead of $\imm_{f_\tau}$. Note that if $\tau$ is the diagram associated to the identity element in $\TL_n(2)$, then $f_\tau(w) = \sgn(w)$, so $\imm_\tau$ is the determinant.

\begin{exam}
Let $n=3$ and $\tau=t_1t_2$. Then $$\theta(1)=1, \quad \theta(\s_1)=t_1-1, \quad \theta(\s_2)=t_2-1.$$
It follows that
\begin{align*}
    \theta(\s_1\s_2)&=(t_1-1)(t_2-1)=t_1t_2-t_1-t_2+1, \\
    \theta(\s_2\s_1)&=(t_2-1)(t_1-1)=t_2t_1-t_2-t_1+1, \\
    \theta(\s_1\s_2\s_1)&=(t_1-1)(t_2-1)(t_1-1)=t_1t_2t_1-t_2t_1-t^{2}_1-t_1t_2+t_1+t_2+t_1-1.
\end{align*}
Using relations in $\TL_3(2)$, we simplify the last expression to
$$\theta(\s_1\s_2\s_1)=t_1+t_2-t_1t_2-t_2t_1-1.$$
Finally, we extract the coefficient of $t_1t_2$ in each of the expressions above to obtain 
$$\imm_{t_1t_2}(M)=M_{13}M_{21}M_{32}-M_{13}M_{22}M_{31}.$$
\end{exam}

\subsection{TL immanants of Jacobi--Trudi matrices} \label{sec:imms and path matrices}

Generalizing \eqref{eq: first det expansion}, the $f$-immanant of a dual Jacobi--Trudi matrix can be written as
    \begin{equation}
    \imm_f \left( \tJT_{\lambda/\mu}(\vec x) \right) = \sum_{\sigma \in \mathfrak{S}_n} \sum_{ \type(\vec{p}) = \sigma} f(\sigma) \wt_{\vec x}(\vec{p}),
    \label{eq: first imm expansion}
    \end{equation}
where the inner sum ranges over all $(\lambda,\mu)$-path families $\vec p$.
Rhoades and Skandera \cite{Rhoades2005} proved that Temperley--Lieb immanants of
path matrices for planar networks can be written as nonnegative expressions 
in the associated network's edge weights.
In the remainder of this subsection, we paraphrase the essential steps of their proof,
since some of the vocabulary developed by Rhoades and Skandera
will be useful in the proofs of our main theorems.
For simplicity, we phrase the results in terms of
the south-southwest lattice path graph $\mathcal{L}$
and dual Jacobi--Trudi matrices,
although the setup generalizes to all planar networks.

For a skew shape $\lambda/\mu$,
a \emph{$(\lambda, \mu)$-subnetwork} is a multiset $H$ of edges in $\mathcal{L}$
that is equal to the disjoint union of all paths in some $(\lambda, \mu)$-path family $\vec p$.
In this case, we say that $\vec{p}$ \emph{covers} $H$.
We may also simply call $H$ a \textit{subnetwork} to mean it is a $(\lambda, \mu)$-subnetwork for some $\lambda/\mu$.
While the path family covering a given subnetwork $H$ is not always unique,
the $\vec x$-weight of such a path family is unique.
As such, we write $\wt_{\vec x}(H)$ to denote
the $\vec x$-weight of any path family that covers $H$.
Given a $(\lambda, \mu)$-subnetwork $H$ with $\ell(\lambda) \leq n$,
the \emph{path family generating function} for $H$ is
\[\beta(H) := \sum_{\vec{p}} \type(\vec{p}) \in \CC[\mathfrak{S}_n],\]
where the sum is over all path families \( \vec{p} \) that cover $H$.
See \Cref{fig:subnetwork and beta} for an example.

\begin{figure}
    \centering
    \[\begin{array}{cc}
    \begin{tikzpicture}[scale=.75,baseline={(0,1)}]
    \node (H) at (-.75,2) {$H=$};
    \node[above] at (5,4) {$A_1$};
    \node[above] at (4,4) {$A_2$};
    \node[above] at (3,4) {$A_3$};
    \node[above] at (2,4) {$A_4$};
    \node[below] at (3,0) {$B_1$};
    \node[below] at (2,0) {$B_2$};
    \node[below] at (1,0) {$B_3$};
    \node[below] at (0,0) {$B_4$};
    \draw[dashed,gray] (0,0) grid (5,4);
    \draw[ultra thick,black] (3, 0) -- (3, 1);
    \draw[ultra thick,black] (3, 1) -- (4, 2);
    \draw[ultra thick,black] (4, 2) -- (4, 3);
    \draw[ultra thick,black] (4, 3) -- (5, 4);
    \draw[ultra thick,black] (2, 0) -- (2, 1);
    \draw[ultra thick,black] (2, 1) -- (3, 2);
    \draw[ultra thick,black] (3, 2) -- (4, 3);
    \draw[ultra thick,black] (4, 3) -- (4, 4);
    \draw[ultra thick,black] (1, 0) -- (2, 1);
    \draw[ultra thick,black] (2, 1) -- (2, 2);
    \draw[ultra thick,black] (2, 2) -- (3, 3);
    \draw[ultra thick,black] (3, 3) -- (3, 4);
    \draw[ultra thick,black] (0, 0) -- (1, 1);
    \draw[ultra thick,black] (1, 1) -- (1, 2);
    \draw[ultra thick,black] (1, 2) -- (1, 3);
    \draw[ultra thick,black] (1, 3) -- (2, 4);
    \end{tikzpicture}
    &
    \begin{array}{cc}
    \begin{tikzpicture}[scale=.5,baseline={(0,1)}]
    %
    \draw[dashed,gray] (0,0) grid (5,4);
    \draw[ultra thick,blue] (3, 0) -- (3, 1);
    \draw[ultra thick,blue] (3, 1) -- (4, 2);
    \draw[ultra thick,blue] (4, 2) -- (4, 3);
    \draw[ultra thick,blue] (4, 3) -- (5, 4);
    \draw[ultra thick,red] (2, 0) -- (2, 1);
    \draw[ultra thick,red] (2, 1) -- (3, 2);
    \draw[ultra thick,red] (3, 2) -- (4, 3);
    \draw[ultra thick,red] (4, 3) -- (4, 4);
    \draw[ultra thick,Dandelion] (1, 0) -- (2, 1);
    \draw[ultra thick,Dandelion] (2, 1) -- (2, 2);
    \draw[ultra thick,Dandelion] (2, 2) -- (3, 3);
    \draw[ultra thick,Dandelion] (3, 3) -- (3, 4);
    \draw[ultra thick,YellowGreen] (0, 0) -- (1, 1);
    \draw[ultra thick,YellowGreen] (1, 1) -- (1, 2);
    \draw[ultra thick,YellowGreen] (1, 2) -- (1, 3);
    \draw[ultra thick,YellowGreen] (1, 3) -- (2, 4);
    \node at (2.5,-.5) {$1$} ;
    \end{tikzpicture}
    &
    \begin{tikzpicture}[scale=.5,baseline={(0,1)}]
    %
    \draw[dashed,gray] (0,0) grid (5,4);
    \draw[ultra thick,red] (3, 0) -- (3, 1);
    \draw[ultra thick,red] (3, 1) -- (4, 2);
    \draw[ultra thick,red] (4, 2) -- (4, 3);
    \draw[ultra thick,blue] (4, 3) -- (5, 4);
    \draw[ultra thick,blue] (2, 0) -- (2, 1);
    \draw[ultra thick,blue] (2, 1) -- (3, 2);
    \draw[ultra thick,blue] (3, 2) -- (4, 3);
    \draw[ultra thick,red] (4, 3) -- (4, 4);
    \draw[ultra thick,Dandelion] (1, 0) -- (2, 1);
    \draw[ultra thick,Dandelion] (2, 1) -- (2, 2);
    \draw[ultra thick,Dandelion] (2, 2) -- (3, 3);
    \draw[ultra thick,Dandelion] (3, 3) -- (3, 4);
    \draw[ultra thick,YellowGreen] (0, 0) -- (1, 1);
    \draw[ultra thick,YellowGreen] (1, 1) -- (1, 2);
    \draw[ultra thick,YellowGreen] (1, 2) -- (1, 3);
    \draw[ultra thick,YellowGreen] (1, 3) -- (2, 4);
    \node at (2.5,-.5) {$\s_1$} ;
    \end{tikzpicture}
    \\
    \begin{tikzpicture}[scale=.5,baseline={(0,1)}]
    %
    \draw[dashed,gray] (0,0) grid (5,4);
    \draw[ultra thick,blue] (3, 0) -- (3, 1);
    \draw[ultra thick,blue] (3, 1) -- (4, 2);
    \draw[ultra thick,blue] (4, 2) -- (4, 3);
    \draw[ultra thick,blue] (4, 3) -- (5, 4);
    \draw[ultra thick,Dandelion] (2, 0) -- (2, 1);
    \draw[ultra thick,red] (2, 1) -- (3, 2);
    \draw[ultra thick,red] (3, 2) -- (4, 3);
    \draw[ultra thick,red] (4, 3) -- (4, 4);
    \draw[ultra thick,red] (1, 0) -- (2, 1);
    \draw[ultra thick,Dandelion] (2, 1) -- (2, 2);
    \draw[ultra thick,Dandelion] (2, 2) -- (3, 3);
    \draw[ultra thick,Dandelion] (3, 3) -- (3, 4);
    \draw[ultra thick,YellowGreen] (0, 0) -- (1, 1);
    \draw[ultra thick,YellowGreen] (1, 1) -- (1, 2);
    \draw[ultra thick,YellowGreen] (1, 2) -- (1, 3);
    \draw[ultra thick,YellowGreen] (1, 3) -- (2, 4);
    \node at (2.5,-.5) {$\s_2$} ;
    \end{tikzpicture}
    &
    \begin{tikzpicture}[scale=.5,baseline={(0,1)}]
    %
    \draw[dashed,gray] (0,0) grid (5,4);
    \draw[ultra thick,red] (3, 0) -- (3, 1);
    \draw[ultra thick,red] (3, 1) -- (4, 2);
    \draw[ultra thick,red] (4, 2) -- (4, 3);
    \draw[ultra thick,blue] (4, 3) -- (5, 4);
    \draw[ultra thick,Dandelion] (2, 0) -- (2, 1);
    \draw[ultra thick,blue] (2, 1) -- (3, 2);
    \draw[ultra thick,blue] (3, 2) -- (4, 3);
    \draw[ultra thick,red] (4, 3) -- (4, 4);
    \draw[ultra thick,blue] (1, 0) -- (2, 1);
    \draw[ultra thick,Dandelion] (2, 1) -- (2, 2);
    \draw[ultra thick,Dandelion] (2, 2) -- (3, 3);
    \draw[ultra thick,Dandelion] (3, 3) -- (3, 4);
    \draw[ultra thick,YellowGreen] (0, 0) -- (1, 1);
    \draw[ultra thick,YellowGreen] (1, 1) -- (1, 2);
    \draw[ultra thick,YellowGreen] (1, 2) -- (1, 3);
    \draw[ultra thick,YellowGreen] (1, 3) -- (2, 4);
    \node at (2.5,-.5) {$\s_1\s_2$} ;
    \end{tikzpicture}
    \end{array}
    \end{array}\]
    \caption{On the left, a $((2,2,2,2), \emptyset)$-subnetwork $H$.
      On the right, the four path families that cover $H$, with different colors indicating distinct paths. Below each path family, its permutation type is displayed. Summing over all types, we obtain $\beta(H) = 1 + \s_1 + \s_2 + \s_1\s_2 = (1+\s_1)(1+\s_2)$.
    }
    \label{fig:subnetwork and beta}
\end{figure}
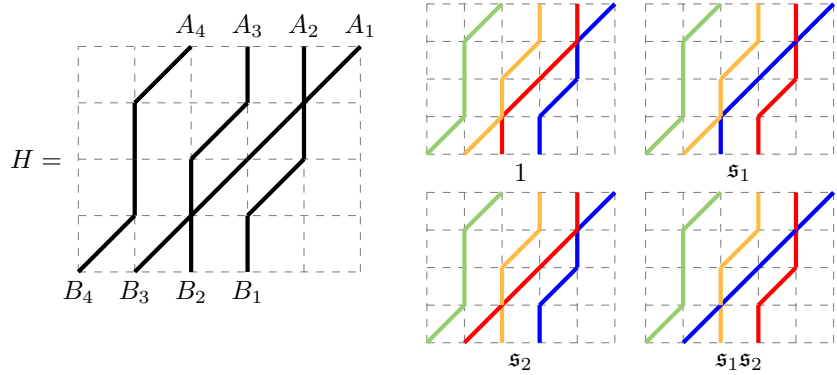

When computing the $f$-immanant of a path matrix, we extend the function $f: \mathfrak{S}_n \to \mathbb{R}$ by linearity so that it is defined on all of $\CC[\mathfrak{S}_n]$.
Then, we may collect the terms in \eqref{eq: first imm expansion} according to the 
subnetwork covered by each path family.
For dual Jacobi--Trudi matrices, we thus obtain the expression
\begin{equation}
\imm_f \left( \tJT_{\lambda/\mu}(\vec x) \right) = \sum_{H} f(\beta(H)) \wt_{\vec x}(H),\label{eq: second det expansion}
\end{equation}
where the sum ranges over all $(\lambda, \mu)$-subnetworks $H$.

Our goal in the remainder of this section is to 
recall Rhoades and Skandera's result, 
which gives a manifestly positive interpretation of \eqref{eq: second det expansion}
in the case of TL immanants.
The primary idea is to show that $f_\tau(\beta(H)) \geq 0$
for any Kauffman diagram $\tau$ and any subnetwork $H$. 

Two south-southwest lattice paths $p$ and $q$ are said to be \emph{noncrossing} if there do not exist $(a_1, b_1), \, (a_2, b_2) \in p$ and $(c_1, d_1), \, (c_2, d_2) \in q$ such that \[a_1 < c_1, \, b_1=d_1, \quad \text{and} \quad c_2 < a_2, \, b_2=d_2.\]
Note that a pair of noncrossing paths may still intersect. 
A path family is said to be \emph{noncrossing} if its paths are pairwise noncrossing.
Observe that, for any subnetwork $H$, there is a unique noncrossing path family $\vec{p}$ that covers $H$. See \Cref{fig:3-2 avoiding network} for an example.

Let \(\vec{p} = (p_1, \ldots, p_n)\) be the unique noncrossing path family  for \(H\). An \textit{essential intersection point} of \(H\) is a vertex in the intersection of two paths \(p_i\) and \(p_{i+1}\) with the property that the previous vertex in each path is not in the intersection of \(p_{i}\) and \(p_{i+1}\).
Note that a pair of paths may have multiple essential intersection points. 
An ordering $(u_1, \ldots, u_\ell)$ of the essential intersection points of $H$ is called \textit{compatible} if there is no directed path in $H$ from $u_b$ to $u_a$ whenever $a < b$.
Given a compatible ordering of the essential intersection points of $H$, an \emph{intersection sequence} for $H$ is $(i_1, \ldots, i_\ell)$, where $i_k=c$ if $u_k$ is in the intersection of $p_c$ and $p_{c+1}$.
For example, the subnetwork in \Cref{fig:subnetwork and beta} has $2$ essential intersection points and a unique intersection sequence $(1,2)$.

A \emph{$(\lambda,\mu)$-generalized wiring diagram} is a $(\lambda,\mu)$-subnetwork $H$ covered by a path family in which no three paths share a vertex.
As with subnetworks, we call $H$ a \emph{generalized wiring diagram}
if it is a $(\lambda,\mu)$-generalized wiring diagram for some $\lambda/\mu$.
Note that, if two path families $\vec{p}$ and $\vec{p}'$ both cover $H$, then $\vec{p}$ has three paths that share a vertex if and only if $\vec{p}'$ does.
This implies that whether a given subnetwork is a generalized wiring diagram is independent of the choice of the covering path family.
Combining the arguments in the proof of \cite[Lemma~2.5]{Rhoades2005} and \cite[p.~461]{Rhoades2005},
we obtain the following lemma.

\begin{lem}\label{lem:beta for wiring diagrams}
Let $H$ be a subnetwork. If $H$ is a generalized wiring diagram, then
    \[\beta(H) = (1+\s_{i_1}) \cdots (1+\s_{i_\ell}),\]
where $(i_1, \ldots, i_\ell)$ is an intersection sequence for $H$.
If $H$ is not a generalized wiring diagram, then $\beta(H)$ is in the kernel of $\theta$.
\end{lem}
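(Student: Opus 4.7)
The plan is to analyze the sum $\beta(H) = \sum_{\vec p} \type(\vec p)$ by examining local behavior at each essential intersection point of the noncrossing covering $\vec p_0$ of $H$, and to handle the two cases separately. In the generalized wiring case, I would first establish a bijection between path families covering $H$ and subsets $S \subseteq [\ell]$: enumerate the essential intersection points in the compatible order $(u_1, \ldots, u_\ell)$ and, for each $k \in S$, ``swap'' the outgoing tails of the two paths meeting at $u_k$. Because $H$ is a generalized wiring diagram, no three paths share a vertex, so the swaps at distinct essential intersection points can be performed independently without interfering with one another, and every covering path family arises from a unique such choice of $S$.

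Next, I would compute the permutation type of the path family indexed by $S$ by induction on $|S|$, processing swaps in the compatible order. Each swap at $u_k$ exchanges the two paths passing through $u_k$, which composes the running type with $\s_{i_k}$ on the right; compatibility of the ordering (no directed path from $u_b$ to $u_a$ when $a<b$) ensures that which two paths actually meet at $u_b$ is correctly tracked in terms of the swaps already performed at earlier points. This gives type equal to $\prod_{k \in S} \s_{i_k}$ in the given order, and summing over $S$ yields
\[
  \beta(H) \;=\; \sum_{S \subseteq [\ell]} \prod_{k \in S} \s_{i_k} \;=\; \prod_{k=1}^\ell (1+\s_{i_k}),
\]
as desired.

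For the non-wiring case, by hypothesis some vertex $v$ of $H$ is shared by at least three paths of the noncrossing covering, which after relabeling we take to be indexed by $j, j+1, j+2$. Locally at $v$, the three incoming path segments can be matched to the three outgoing segments in any of $3! = 6$ ways, and these six local configurations correspond to the six elements of the subgroup $\langle \s_j, \s_{j+1}\rangle \cong \mathfrak{S}_3$ acting by right multiplication on the global permutation type. Isolating these local choices from the rest of $H$ gives a factorization
\[
  \beta(H) \;=\; \alpha \cdot \bigl(1 + \s_j + \s_{j+1} + \s_j\s_{j+1} + \s_{j+1}\s_j + \s_j\s_{j+1}\s_j\bigr) \cdot \gamma
\]
for some $\alpha, \gamma \in \csn$, and since $\ker(\theta)$ is a two-sided ideal containing the middle factor (by the defining presentation of $\TL_n(2)$), we conclude $\beta(H) \in \ker(\theta)$. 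The main obstacle is rigorously justifying this factorization when $H$ has several vertices at which three or more paths meet, since the local choices at distinct bad vertices may interact through the global permutation type; I would address this by choosing the compatible ordering so that the essential intersection points incident to a single bad vertex $v$ are processed consecutively, then using the braid relations to collect the corresponding transpositions into the claimed $\mathfrak{S}_3$ factor, and induct on the number of bad vertices if necessary.
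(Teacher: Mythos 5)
First, a remark on the comparison: the paper does not actually prove this lemma itself --- it is imported from Rhoades--Skandera (their Lemma~2.5 and the discussion on p.~461), so your proposal is being measured against that standard argument rather than an in-paper proof. Your treatment of the generalized wiring case is essentially that argument and is correct in outline: covering families biject with subsets $S$ of the essential intersection points, and processing swaps in a compatible order multiplies the type on the right by $\s_{i_k}$. One small point deserves sharpening: what makes ``right multiplication by $\s_{i_k}$'' correct is not tracking \emph{which} two paths meet at $u_k$ (their sources are irrelevant), but the fact that the two strands leaving $u_k$ terminate at the consecutive sinks $B_{i_k}$ and $B_{i_k+1}$; this holds because every swap point reachable from $u_k$ comes later in the compatible order and is therefore still unswapped at the moment you process $u_k$. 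With that sentence added, part one is fine.

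The non-wiring case, however, has a genuine gap. Your claim that the six local rematchings at a triply covered vertex $v$ correspond to right multiplication by the six elements of the parabolic subgroup $\langle \s_j,\s_{j+1}\rangle$ is not correct in general, and the asserted two-sided factorization $\beta(H)=\alpha\cdot(1+\s_j+\s_{j+1}+\s_j\s_{j+1}+\s_{j+1}\s_j+\s_j\s_{j+1}\s_j)\cdot\gamma$ is not justified. Rematching the strands at $v$ permutes the \emph{sinks} of the three downstream strands, and within a fixed class of families those sinks need not be $B_j,B_{j+1},B_{j+2}$, nor even consecutive (downstream swaps that are fixed within the class can carry the strands to far-apart sinks). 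Moreover the obstacle is not ``several bad vertices'': even with a single bad vertex the naive global factorization fails, and the proposed repair via braid relations and intersection sequences is off-track, since intersection sequences are only meaningful in the wiring case. The correct repair needs only one multiply covered vertex $v$: group the covering families by everything except the matching of the $r\ge 3$ upstream portions at $v$ to the $r$ downstream portions (upstream portions have distinct sources and downstream portions distinct sinks, so each class consists of exactly $r!$ families, all of which cover $H$). The type-sum of a class is then $\sigma_0\cdot\sum_{\pi\in\mathfrak{S}_C}\pi$, where $C$ is the $r$-element set of sinks of the downstream strands and $\sigma_0$ is the type of one member. Finally, $\sum_{\pi\in\mathfrak{S}_C}\pi$ lies in $\ker\theta$ for \emph{any} $3$-element (or larger) subset $C\subseteq[n]$, not just consecutive ones: $\ker\theta$ is a two-sided ideal of $\csn$, hence closed under conjugation, and it contains the defining parabolic sum $1+\s_1+\s_2+\s_1\s_2+\s_2\s_1+\s_1\s_2\s_1$; conjugating gives the sum over $\mathfrak{S}_{C'}$ for any $3$-subset $C'$, and the $\mathfrak{S}_C$-sum is a left multiple of such a sum. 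Summing over classes then gives $\beta(H)\in\ker\theta$. So your overall strategy for part two (local rematching at a bad vertex producing an $\mathfrak{S}_3$-symmetrized factor killed by $\theta$) is salvageable, but as written the identification of the local symmetry, the factorization, and the proposed multi-vertex induction do not hold up and must be replaced by the class-by-class argument above.
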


Note that, if $H$ is indeed a generalized wiring diagram, then by \Cref{lem:beta for wiring diagrams},
\begin{equation}\label{eq: getting psi algebraically}
    \theta(\beta(H)) = \theta((1+\s_{i_1}) \cdots (1+\s_{i_\ell})) = t_{i_1} \cdots t_{i_\ell}. 
\end{equation}
Using the defining relations of $\TL_n(2)$, the latter expression can
be reduced to \(2^k \tau\) for some integer \( k \) and a Kauffman
diagram \(\tau \in \K_n\). Note that \( k \) and \( \tau \) are
uniquely determined by \( H \), because the element
\(\theta(\beta(H)) \in \TL_n(2)\) has a unique expansion in the basis
\(\K_n\).

\begin{defn}\label{def:1}
  For a generalized wiring diagram \( H \), we define
  \( \epsilon(H) \) and \( \psi(H) \) as the unique integer and the
  Kauffman diagram, respectively, satisfying
\[
  \theta(\beta(H)) = 2^{\epsilon(H)} \psi(H).
\]
The Kauffman diagram \(\psi(H)\) is called the \emph{Temperley--Lieb
  type} of \(H\).
\end{defn}

We can compute $\psi(H)$ and $\epsilon(H)$ visually using the following method \cite[p.~462]{Rhoades2005}. An example is given in \Cref{fig:visually obtaining psi}.
\begin{enumerate}
    \item Contract each doubly covered edge into a single vertex.
    \item Split each vertex shared by two paths into two vertices---one incident to the two incoming edges and the other incident to the two outgoing edges.
    \item If any closed loops are created during steps (1) and (2), remove them.
    \item The number of loops removed in step (3) is $\epsilon(H)$, and the resulting graph is the Kauffman diagram $\psi(H)$.
\end{enumerate}

\begin{figure}
    \centering
    \[\begin{array}{ccccc}
    \begin{tikzpicture}[scale=.75,baseline={(0,1.4)}]
    \node[above] at (5,4) {$A_1$};
    \node[above] at (4,4) {$A_2$};
    \node[above] at (3,4) {$A_3$};
    \node[above] at (2,4) {$A_4$};
    \node[below] at (3,0) {$B_1$};
    \node[below] at (2,0) {$B_2$};
    \node[below] at (1,0) {$B_3$};
    \node[below] at (0,0) {$B_4$};
    \draw[dashed,gray] (0,0) grid (5,4);
    \draw[thick,black] (3, 0) -- (3, 1);
    \draw[thick,black] (3, 1) -- (4, 2);
    \draw[thick,black] (4, 2) -- (4, 3);
    \draw[thick,black] (4, 3) -- (5, 4);
    \draw[thick,black] (2, 0) -- (2, 1);
    \draw[thick,black] (2, 1) -- (3, 2);
    \draw[thick,black] (3, 2) -- (4, 3);
    \draw[thick,black] (4, 3) -- (4, 4);
    \draw[thick,black] (1, 0) -- (2, 1);
    \draw[thick,black] (2, 1) -- (2, 2);
    \draw[thick,black] (2, 2) -- (3, 3);
    \draw[thick,black] (3, 3) -- (3, 4);
    \draw[thick,black] (0, 0) -- (1, 1);
    \draw[thick,black] (1, 1) -- (1, 2);
    \draw[thick,black] (1, 2) -- (1, 3);
    \draw[thick,black] (1, 3) -- (2, 4);
    \end{tikzpicture}
    &
    \leadsto
    &
    \begin{tikzpicture}[scale=.75,baseline={(0,1.4)}]
    \draw[circle, fill=black](5,4) circle[radius = 1mm] node {};
    \draw[circle, fill=black](4,4) circle[radius = 1mm] node {};
    \draw[circle, fill=black](3,4) circle[radius = 1mm] node {};
    \draw[circle, fill=black](2,4) circle[radius = 1mm] node {};
    \draw[circle, fill=black](3,0) circle[radius = 1mm] node {};
    \draw[circle, fill=black](2,0) circle[radius = 1mm] node {};
    \draw[circle, fill=black](1,0) circle[radius = 1mm] node {};
    \draw[circle, fill=black](0,0) circle[radius = 1mm] node {};
    \draw[dashed,gray] (0,0) grid (5,4);
    \draw[thick,black] (3, 0) -- (3, 1);
    \draw[thick,black] (3, 1) -- (4, 2);
    \draw[thick,black] (4, 2) -- (4, 2.75);
    \draw[thick,black] (4, 4) -- (4, 3.25);
    \draw[thick,black] (4, 3.25) -- (5,4);
    \draw[thick,black] (2, 1.25) -- (3, 2);
    \draw[thick,black] (3, 2) -- (4, 2.75);
    \draw[thick,black] (2, 0) -- (2, .75);
    \draw[thick,black] (1, 0) -- (2, .75);
    \draw[thick,black] (2, 1.25) -- (2, 2);
    \draw[thick,black] (2, 2) -- (3, 3);
    \draw[thick,black] (3, 3) -- (3, 4);
    \draw[thick,black] (0, 0) -- (1, 1);
    \draw[thick,black] (1, 1) -- (1, 2);
    \draw[thick,black] (1, 2) -- (1, 3);
    \draw[thick,black] (1, 3) -- (2, 4);
    \end{tikzpicture}
    &
    \leadsto
    &
    \begin{array}{c}
    \psi(H) = 
    \vcenter{\hbox{\begin{tikzpicture}[baseline,x=.75cm, y=.5cm,scale=.75]
    \draw[circle, fill=black](0, 0) circle[radius = .75mm] node {};
    \draw[circle, fill=black](0, -1) circle[radius = .75mm] node {};
    \draw[circle, fill=black](0, -2) circle[radius = .75mm] node {};
    \draw[circle, fill=black](0, 1) circle[radius = .75mm] node {};
    \draw[circle, fill=black](1, -2) circle[radius = .75mm] node {};
    \draw[circle, fill=black](1, -1) circle[radius = .75mm] node {};
    \draw[circle, fill=black](1, 0) circle[radius = .75mm] node {};
    \draw[circle, fill=black](1, 1) circle[radius = .75mm] node {};
    \draw[thick] (0, 1) to (1, 1);
    \draw[thick] (0, 0) to (1, -2);
    \draw[thick] (0, -1) to [bend left=45] (0, -2);
    \draw[thick] (1, 0) to [bend right=45] (1, -1);
    \end{tikzpicture}}}
    = t_1t_2,
    \\ \, \\
    \epsilon(H) = 0
    \end{array}
    \end{array}\]
    \caption{Using the subnetwork $H$ from \Cref{fig:subnetwork and beta}, we implement the visual method for obtaining $\psi(H)$ and $\epsilon(H)$.}
    \label{fig:visually obtaining psi}
\end{figure}
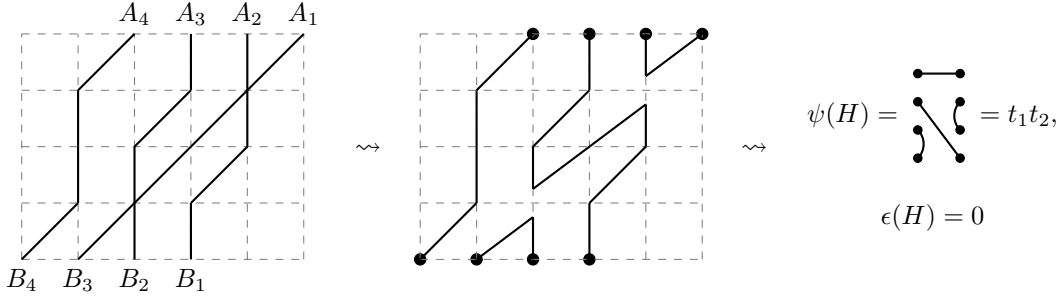

When computing the Temperley--Lieb immanant of Jacobi--Trudi matrices
via \eqref{eq: second det expansion}, one must compute
$f_\tau(\beta(H))$ as $H$ ranges over all subnetworks. Since
\( f_\tau(\beta(H)) \) is the coefficient of \( \tau \) in
\( \theta(\beta(H)) \), by \Cref{lem:beta for wiring diagrams},
Equation~\eqref{eq: getting psi algebraically}, and the ensuing
discussion, we have
\begin{equation}\label{eq:2}
  f_\tau(\beta(H)) 
  = \begin{cases}
    2^{\epsilon(H)} & \text{if } H \text{ is a generalized wiring diagram with } \psi(H)=\tau, \\
    0 & \text{otherwise.}
  \end{cases}
\end{equation}
As a result, we obtain the following corollary.

\begin{cor}\label{cor:TL numerical positivity}\cite[Theorem~3.1]{Rhoades2005}
Given any Kauffman diagram $\tau$ and skew shape $\lambda/\mu$, we have
    \[\imm_\tau(\tJT_{\lambda/\mu}(\vec x)) = \sum_H 2^{\epsilon(H)} \wt_{\vec x}(H),\]
where the sum is over all $(\lambda, \mu)$-generalized wiring diagrams \( H \) with $\psi(H)=\tau$.
\end{cor}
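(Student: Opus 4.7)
The plan is essentially a one-line assembly of the machinery already set up in the excerpt; the real content of the corollary has been packaged into \Cref{lem:beta for wiring diagrams} and \Cref{def:1}, together with the subnetwork-indexed expansion \eqref{eq: second det expansion}. So the proof is mostly a matter of substitution, and I would structure it as follows.

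First, I would invoke \eqref{eq: second det expansion} with $f = f_\tau$ to write
\[
  \imm_\tau\!\left(\tJT_{\lambda/\mu}(\vec x)\right) = \sum_H f_\tau(\beta(H))\,\wt_{\vec x}(H),
\]
where the sum is over all $(\lambda,\mu)$-subnetworks $H$. The goal is then to show that the scalar $f_\tau(\beta(H))$ is exactly the indicator $2^{\epsilon(H)}[\psi(H) = \tau]$ for generalized wiring diagrams, which is precisely the content of \eqref{eq:2}.

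Next, I would verify \eqref{eq:2}. By definition, $f_\tau(\beta(H))$ is the coefficient of $\tau$ in $\theta(\beta(H)) \in \TL_n(2)$. Split into two cases. If $H$ is not a generalized wiring diagram, then by \Cref{lem:beta for wiring diagrams} we have $\beta(H) \in \ker\theta$, so $\theta(\beta(H)) = 0$ and hence $f_\tau(\beta(H)) = 0$. If $H$ is a generalized wiring diagram with intersection sequence $(i_1,\ldots,i_\ell)$, then by \Cref{lem:beta for wiring diagrams} and the algebra homomorphism $\theta$,
\[
  \theta(\beta(H)) = \theta\!\left((1+\s_{i_1})\cdots(1+\s_{i_\ell})\right) = t_{i_1}\cdots t_{i_\ell},
\]
which by \Cref{def:1} equals $2^{\epsilon(H)}\psi(H)$. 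Since $\K_n$ is a basis of $\TL_n(2)$, the coefficient of $\tau$ in this expression is $2^{\epsilon(H)}$ if $\psi(H) = \tau$ and $0$ otherwise.

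Finally, I would substitute this back into the expansion, so that only generalized wiring diagrams $H$ with $\psi(H) = \tau$ contribute, each with coefficient $2^{\epsilon(H)}$, giving exactly
\[
  \imm_\tau\!\left(\tJT_{\lambda/\mu}(\vec x)\right) = \sum_{\substack{H \text{ gen. wiring diag.} \\ \psi(H) = \tau}} 2^{\epsilon(H)}\,\wt_{\vec x}(H).
\]
There is no real obstacle here: the technical work has already been done in \Cref{lem:beta for wiring diagrams} (which is quoted from Rhoades--Skandera), and the only thing to be careful about is keeping track of the fact that $\beta(H)$ lies in $\CC[\sn]$ while $\theta(\beta(H))$ lies in $\TL_n(2)$, and that $f_\tau$ was defined precisely as the composition that extracts the $\tau$-coefficient of $\theta$. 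Thus the proof is a clean unpacking of definitions.
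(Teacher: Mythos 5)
Your proof is correct and follows essentially the same route as the paper: start from the subnetwork expansion \eqref{eq: second det expansion}, evaluate $f_\tau(\beta(H))$ via \Cref{lem:beta for wiring diagrams} and \Cref{def:1} to obtain \eqref{eq:2}, and substitute. The paper presents this same chain of reasoning in the discussion immediately preceding the corollary, so there is no meaningful difference in approach.
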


When $\tau$ is the diagram associated to the identity element in $\TL_n(2)$, the requirement that $\psi(H) = \tau$ is equivalent to requiring that $H$ can only be covered by a nonintersecting path family; thus we recover \eqref{eq:LGV}. Otherwise, $\imm_\tau$ is a generating function for subnetworks with a Temperley--Lieb type specified by $\tau$.

\subsection{Representation theory of \texorpdfstring{$\mathfrak{S}_n\times \mathfrak{S}_n$}{Sn x Sn}}

The irreducible representations of the symmetric group
\( \mathfrak{S}_n \) correspond bijectively to the partitions
\( \lambda \) of \( n \). We denote by \( S^\lambda \) the
corresponding irreducible representation, known as the \emph{Specht
  module}. The \emph{Frobenius characteristic map} is an isomorphism
between the space of (virtual) \( \mathfrak{S}_n \)-modules and the space of
symmetric functions of homogeneous degree \( n \), defined as follows:
for a finite-dimensional \( \mathfrak{S}_n \)-module
\( V = \bigoplus_{\lambda \vdash n} (S^\lambda)^{\oplus c_\lambda} \),
\[
  \Frob (V) := \sum_{\lambda \vdash n} c_\lambda s_\lambda.
\]

The irreducible representations of the product
\(\mathfrak{S}_n\times \mathfrak{S}_n\) of two symmetric groups are
indexed by the pairs \( (\lambda,\mu) \) of partitions of \( n \),
with corresponding representations \( S^{\lambda} \otimes S^{\mu} \).
The Frobenius characteristic map extends naturally to the
representations of \(\mathfrak{S}_n\times \mathfrak{S}_n\). More
precisely, for a finite dimensional
\( \mathfrak{S}_n\times \mathfrak{S}_n \)-module
\[
  V = \bigoplus_{\lambda,\mu\vdash n} \left( S^\lambda \otimes S^\mu
  \right)^{\oplus c_{\lambda,\mu}},
\]
we define
\[
  \Frob_{\vec x, \vec y}(V)
  =\sum_{\lambda,\mu\vdash n} c_{\lambda,\mu} s_\lambda(\vec x) s_\mu(\vec y).
\]
In particular, if \( \Frob_{\vec x,\vec y}(V) = f(\vec x, \vec y)\)
for some \( \mathfrak{S}_n \times \mathfrak{S}_n \)-module \( V \),
then \( f(\vec x, \vec y) \) is \( s\)-positive.

\section{Hadamard products of ribbon-like Jacobi--Trudi matrices}\label{Sec: main proofs for 3x2 avoiders}

In \Cref{sec:step1 m expansion}, we prove \Cref{thm: TL pos for 3x2 avoiding}, i.e., that the Temperley--Lieb immanant
    \[\imm_\tau \left(
    \tJT_{\lambda^{(1)} / \mu^{(1)}} (\vec{x}^{(1)}) 
    * \cdots * 
    \tJT_{\lambda^{(k)} / \mu^{(k)}} (\vec{x}^{(k)}) 
    \right)\]
is $s$-positive for any $\tau \in \K_n$, provided that each $\lambda^{(i)} / \mu^{(i)}$ is a skew shape that does not contain a $3 \times 2$ block of cells. We say that such shapes are $(3 \times 2)$-\emph{avoiding}.

If each of the skew shapes is in fact a \emph{ribbon}, i.e., a connected skew shape not containing a $2 \times 2$ block, we can explicitly describe the expansion of this immanant into the Schur basis using standard Young tableaux with prescribed descent sets.
This is done in \Cref{sec:step2 s expansion}.

In \Cref{sec: limitations}, we discuss an extension of the result to a slightly larger class of collections of skew shapes. We then demonstrate that extending our main result to skew shapes containing a $3 \times 2$ block would require a new technique.

\subsection{Hadamard products of \texorpdfstring{$(3 \times 2)$}{3x2}-avoiding skew shapes}\label{sec:step1 m expansion}

When computing an immanant of a single Jacobi--Trudi matrix, one first expands the immanant as a sum over $(\lambda, \mu)$-path families as in \eqref{eq: first det expansion}.
For Hadamard products of several Jacobi--Trudi matrices, we start in an analogous fashion.
Given skew shapes $\lambda^{(1)}/\mu^{(1)}, \ldots, \lambda^{(k)}/\mu^{(k)}$
with \( \ell(\lambda^{(i)})\le n \) for all \( i \), we have
    \begin{align}
    \imm_f \left( \tJT_{\lambda^{(1)}/\mu^{(1)} }(\vec{x}^{(1)})
    *\cdots * 
    \right.\vphantom{.} &
    \left.\vphantom{.} 
    \tJT_{\lambda^{(k)}/\mu^{(k)} }(\vec{x}^{(k)})  \right)
    \nonumber 
    \\ 
    &= \sum_{\sigma \in \mathfrak{S}_n} \,\,
    \sum_{(\vec{p}^{(1)}, \ldots, \vec{p}^{(k)} )} \,\,
    f(\sigma) 
    \wt_{\vec{x}^{(1)}}(\vec{p}^{(1)}) 
    \cdots 
    \wt_{\vec{x}^{(k)}}(\vec{p}^{(k)}).
    \label{eq: hadamard first expansion}
    \end{align}
Here the inner sum is over all tuples $(\vec{p}^{(1)}, \ldots, \vec{p}^{(k)})$
such that each $\vec{p}^{(i)}$ is a $\left(\lambda^{(i)}, \mu^{(i)}\right)$-path family satisfying $\type(\vec{p}^{(i)}) = \sigma$.

In the single matrix case, the next step is to collect path families according to their union, and reorganize the sum to be over subnetworks instead; see \eqref{eq: second det expansion}.
We now introduce the corresponding notions for Hadamard products, which will allow us to simplify the expression above.

Given tuples
\[\underline{\lambda} = (\lambda^{(1)}, \ldots, \lambda^{(k)})
\quad \text{ and } \quad
\underline{\mu} = (\mu^{(1)}, \ldots, \mu^{(k)})\]
of partitions satisfying $\mu^{(i)} \subseteq \lambda^{(i)}$ for all $i$, 
a $(\underline{\lambda},\underline{\mu})$-\emph{multinetwork} is a tuple 
\[\vec{H} = (H^{(1)}, \ldots, H^{(k)})\]
in which each $H^{(i)}$ is a $(\lambda^{(i)}, \mu^{(i)})$-subnetwork.
Given such a multinetwork,
we define the corresponding \emph{path family generating function} to be
    \begin{align}
    \beta(\vec{H}) &= \sum_{\underline{\vec{p}}}
    \type(\underline{\vec{p}}) \in \CC[\mathfrak{S}_n]. \label{eq:def of multibeta}
    \end{align}
Here, the sum is over all tuples $\underline{\vec{p}} = (\vec{p}^{(1)}, \ldots, \vec{p}^{(k)})$ in which each $\vec{p}^{(i)}$ is a $(\lambda^{(i)},\mu^{(i)})$-path family covering $H^{(i)}$, and all of the path families $\vec{p}^{(i)}$ are of the same permutation type. We denote this shared permutation type by $\type(\underline{\vec{p}})$. 
In such a situation, we say that the tuple $\underline{\vec{p}}$ \emph{covers} $\vec{H}$.
An example of a multinetwork is given in \Cref{fig:example of a multinetwork}.

\begin{figure}
    \centering
    \[\begin{array}{ccc}
    \begin{tikzpicture}[scale=.5,baseline={(0,1)}]
    \draw[dashed,gray] (0,0) grid (9, 8);
    \node[above] at (9, 8) {\tiny $A_1$};
    \node[above] at (7, 8) {\tiny $A_2$};
    \node[above] at (4, 8) {\tiny $A_3$};
    \node[above] at (2, 8) {\tiny $A_4$};
    \node[above] at (1, 8) {\tiny $A_5$};
    \node[below] at (6, 0) {\tiny $B_1$};
    \node[below] at (4, 0) {\tiny $B_2$};
    \node[below] at (2, 0) {\tiny $B_3$};
    \node[below] at (1, 0) {\tiny $B_4$};
    \node[below] at (0, 0) {\tiny $B_5$};
    \draw[thick] (9,8) -- (7, 6) -- (7,5) -- (6,4) -- (6,0);
    \draw[thick] (7,8) -- (7, 6) -- (6,5) -- (6,4) -- (4,2) -- (4,0);
    \draw[thick] (4,8) -- (4,2) -- (2,0);
    \draw[thick] (2,8) -- (2,6) -- (1,5) -- (1,0);
    \draw[thick] (1,8) -- (1,4) -- (0,3) -- (0,0);
    \end{tikzpicture}
    &
    \begin{tikzpicture}[scale=.5,baseline={(0,1)}]
    \draw[dashed,gray] (0,0) grid (6, 8);
    \node[above] at (6, 8) {\tiny $A_1$};
    \node[above] at (5, 8) {\tiny $A_2$};
    \node[above] at (4, 8) {\tiny $A_3$};
    \node[above] at (3, 8) {\tiny $A_4$};
    \node[above] at (1, 8) {\tiny $A_5$};
    \node[below] at (5, 0) {\tiny $B_1$};
    \node[below] at (4, 0) {\tiny $B_2$};
    \node[below] at (3, 0) {\tiny $B_3$};
    \node[below] at (1, 0) {\tiny $B_4$};
    \node[below] at (0, 0) {\tiny $B_5$};
    \draw[thick] (6,8) -- (6,5) -- (5,4) -- (5,0);
    \draw[thick] (5,8) -- (5,3) -- (4,2) -- (4,0);
    \draw[thick] (4,8) -- (4,4) -- (3,3) -- (3,0);
    \draw[thick] (3,8) -- (3,7) -- (2,6) -- (2,4) -- (1,3) -- (1,0);
    \draw[thick] (1,8) -- (1,2) -- (0,1) -- (0,0);
    \end{tikzpicture}
    &
    \begin{tikzpicture}[scale=.5,baseline={(0,1)}]
    \draw[dashed,gray] (0,0) grid (8, 8);
    \node[above] at (8, 8) {\tiny $A_1$};
    \node[above] at (7, 8) {\tiny $A_2$};
    \node[above] at (6, 8) {\tiny $A_3$};
    \node[above] at (3, 8) {\tiny $A_4$};
    \node[above] at (2, 8) {\tiny $A_5$};
    \node[below] at (7, 0) {\tiny $B_1$};
    \node[below] at (4, 0) {\tiny $B_2$};
    \node[below] at (3, 0) {\tiny $B_3$};
    \node[below] at (2, 0) {\tiny $B_4$};
    \node[below] at (0, 0) {\tiny $B_5$};
    \draw[thick] (8,8) -- (7,7) -- (7,4) -- (7,0);
    \draw[thick] (7,8) -- (7,6) -- (6,5) -- (6,4) -- (5,3) -- (5,2) -- (4,1) -- (4,0);
    \draw[thick] (6,8) -- (6,5) -- (5,4) -- (5,3) -- (4,2) -- (4,1) -- (3,0);
    \draw[thick] (3,8) -- (3,6) -- (2,5) -- (2,0);
    \draw[thick] (2,8) -- (2,5) -- (1,4) -- (1,2) -- (0,1) -- (0,0);
    \end{tikzpicture}
    \\
    H^{(1)}
    & 
    H^{(2)}
    & 
    H^{(3)}
    \end{array} \]
    \caption{A $(\underline{\lambda}, \underline{\mu})$-multinetwork $\vec{H}$ for
    \begin{align*}
    \underline{\lambda} &= ((5, 4, 2, 1, 1), (2, 2, 2, 2, 1), (4, 4, 4, 2, 2)), \\
    \underline{\mu} &= ((2,1), (1, 1, 1), (3, 1, 1, 1)).
    \end{align*}
    Note that $\beta(\vec{H}) = 2^3 (1+\s_1)(1+\s_4) = 8 \cdot \B{\{1,4\}}.$ 
    For each collection $\vec p$ of path families covering $\vec H$,
    swapping paths around each loop does not affect the permutation type. In particular, for each permutation type, there are $2^{3}$ path families of that type covering $\vec{H}$.  
    }
    \label{fig:example of a multinetwork}
\end{figure}

Let \( \underline{\vec x} = (\vec x^{(1)},\dots,\vec x^{(k)}) \).
For any tuple $\underline{\vec{p}} = (\vec{p}^{(1)}, \ldots, \vec{p}^{(k)})$ covering a multinetwork $\vec{H}$,
we define the $\underline{\vec x}$-weight of $\vec{H}$ to be
\[\wt_{\underline{\vec x}}(\vec H) = \wt_{\vec{x}^{(1)}}(\vec{p}^{(1)}) \cdots \wt_{\vec{x}^{(k)}}(\vec{p}^{(k)}).\]
We can use this notation to group tuples of path families
according to the multinetwork that they cover and rewrite the
expansion in \eqref{eq: hadamard first expansion} as
    \begin{equation}
    \imm_f \left( \tJT_{\lambda^{(1)}/\mu^{(1)}}(\vec{x}^{(1)}) *\cdots * \tJT_{\lambda^{(k)}/\mu^{(k)}}(\vec{x}^{(k)}) \right) = \sum_{\vec{H}} f(\beta(\vec{H})) \wt_{\underline{\vec x}}(\vec{H}), \label{eq: hadamard second expansion}
    \end{equation}
where the sum is over all $(\underline{\lambda}, \underline{\mu})$-multinetworks $\vec{H}$.

When showing that a Temperley--Lieb immanant of a single Jacobi--Trudi matrix is nonnegative,
the final step is to argue that for each subnetwork $H$, the path family generating functions $\beta(H)$ have a simple form, for which the evaluations $f_\tau(\beta(H))$ are clearly nonnegative. See \Cref{lem:beta for wiring diagrams} and \eqref{eq:2}.
In what follows, we argue that $\beta(\vec{H})$ still has a simple form
if $\vec{H}$ is a multinetwork for a collection of $(3 \times 2)$-avoiding skew shapes.

To this end, we introduce some notation.
Given $I \subseteq [n-1]$, define
    \[\B{I} = \prod_{i \in I} (1+\s_i) \in \CC[\sn].\]
Throughout, products over subsets are taken in increasing order from left to right.
We also define \(\T{I} = \theta(\B{I})\). Note that
\begin{align*}
  \T{I} = \theta \left( \prod_{i \in I} (1+\s_i) \right)
  = \prod_{i \in I} (1+t_i - 1)
  = \prod_{i \in I} t_i.
\end{align*}
From this, one can see that for \( I,I' \subseteq[n-1] \), we have
\( \tau(I) = \tau(I') \) if and only if \( I=I' \).
Hence, we also have \( \B{I} = \B{I'} \) if and only if \( I=I' \).

Recall \( \epsilon(H) \) and \( \psi(H) \) defined in
\Cref{def:1}.

\begin{lem}\label{lem:beta for ribbons}
  Suppose \(\lambda / \mu\) is a \((3 \times 2)\)-avoiding skew shape
  with \( \ell(\lambda)\le n \). Let \( H \) be a
  \((\lambda, \mu)\)-subnetwork, and let
  \( \vec{p}=(p_1, \ldots, p_n) \) be the unique noncrossing path
  family covering \( H \). Then the following hold:
  \begin{enumerate}
  \item \(H\) is a generalized wiring diagram.
\item There exists a unique set \(I \subseteq [n-1]\) such that
  \(\beta(H) = 2^{\epsilon(H)}\B{I}\). In particular,
  \( \psi(H) = \tau(I) \).
\item The path \(p_i\) intersects \(p_{i+1}\) if and only if
  \(i \in I\).
\item The essential intersection points of \(p_i\) with \(p_{i+1}\)
  always occur before the essential intersection points of \(p_{i+1}\)
  with \(p_{i+2}\).
  \end{enumerate}
\end{lem}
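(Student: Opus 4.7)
My plan is to deduce the structural statements from a local analysis of the noncrossing path family, using the $(3 \times 2)$-avoidance hypothesis to rule out degenerate configurations. The proof of Part (1) extracts a $3 \times 2$ block directly from any triple intersection; Part (4) does the same for a putative inversion of essential-intersection heights; and Parts (2), (3) then follow by combining \Cref{lem:beta for wiring diagrams} with the local algebra of $(1 + \s_i)$'s.

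For Part (1), I argue by contradiction. If $p_j$ and $p_k$ share a vertex $v = (a, b)$ with $k - j \geq 2$, the noncrossing condition forces every intermediate path to pass through $v$: at height $b$, the intermediate path's column lies between those of $p_j$ and $p_k$, which coincide. So any triple of coincident paths reduces to three consecutive paths $p_i, p_{i+1}, p_{i+2}$ through $v$. Since $p_{i+2}$ starts at column $\lambda_{i+2} + n - i - 2$ and $p_i$ ends at column $\mu_i + n - i$, we must have $\mu_i + n - i \leq a \leq \lambda_{i+2} + n - i - 2$. Setting $c = a - n + i + 2$ yields $\mu_i + 2 \leq c \leq \lambda_{i+2}$, so both columns $c-1$ and $c$ lie in each of rows $i, i+1, i+2$ of $\lambda/\mu$, forming a $3 \times 2$ block and contradicting the hypothesis.

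For Part (4), I again argue by contradiction. Suppose there exist essential intersections $v_1 = (a_1, b_1) \in p_i \cap p_{i+1}$ and $v_2 = (a_2, b_2) \in p_{i+1} \cap p_{i+2}$ with $b_1 \leq b_2$. The equality case forces $p_i, p_{i+1}, p_{i+2}$ through a common vertex, contradicting Part (1). For $b_1 < b_2$, essentiality combined with noncrossing pins down the local incoming directions at $v_2$ (namely, $p_{i+1}$ must enter via a southwest step from $(a_2 + 1, b_2 + 1)$ and $p_{i+2}$ via a south step from $(a_2, b_2 + 1)$, since the other three combinations either place $p_{i+1}$ strictly west of $p_{i+2}$ at height $b_2 + 1$ or leave the previous vertex in the intersection), and analogously at $v_1$. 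Tracking the three paths between heights $b_1$ and $b_2$ under the noncrossing ordering $p_i$ east of $p_{i+1}$ east of $p_{i+2}$, I would produce two adjacent columns lying in rows $i, i+1, i+2$ of $\lambda/\mu$, giving a $3 \times 2$ block.

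Given Parts (1) and (4), Parts (2) and (3) follow. By Part (1), $H$ is a generalized wiring diagram, so \Cref{lem:beta for wiring diagrams} yields $\beta(H) = (1 + \s_{i_1}) \cdots (1 + \s_{i_\ell})$ for some intersection sequence. Define $I = \{i : p_i \cap p_{i+1} \neq \emptyset\}$. Applying Part (4) to each consecutive triple $(p_j, p_{j+1}, p_{j+2})$ and using the commutation $(1+\s_i)(1+\s_j) = (1+\s_j)(1+\s_i)$ for $|i-j| \geq 2$, I may reorder the product to one with indices nondecreasing. Grouping gives $\prod_{i \in I}(1 + \s_i)^{k_i}$, where $k_i$ is the number of essential intersections of $p_i \cap p_{i+1}$, and the identity $(1 + \s_i)^{k_i} = 2^{k_i - 1}(1 + \s_i)$ collapses this to $2^{\ell - |I|} \B{I}$. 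Hence $\epsilon(H) = \ell - |I|$ and $\psi(H) = \T{I}$, while uniqueness of $I$ follows from the injectivity of $I \mapsto \B{I}$ noted in the excerpt. The main obstacle is the case analysis in Part (4): several local configurations between $v_1$ and $v_2$ must be examined, and a careful geometric accounting of the cells forced into rows $i, i+1, i+2$ is required to reliably extract the $3 \times 2$ block.
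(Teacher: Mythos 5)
Parts (1)--(3) of your proposal are sound. Your part (1) is essentially the paper's argument (the endpoint bounds \( \mu_i + n - i \le a \le \lambda_{i+2}+n-i-2 \) forcing a \(3\times 2\) block), and your route to (2)--(3) differs mildly from the paper's: you invoke \Cref{lem:beta for wiring diagrams} with an arbitrary intersection sequence and then sort it using the commutations \((1+\s_i)(1+\s_j)=(1+\s_j)(1+\s_i)\) for \(|i-j|\ge 2\), whereas the paper constructs a compatible ordering whose intersection sequence is already weakly increasing. Your sorting step is legitimate, but it silently uses the bridge that part (4) gives a directed path along \(p_{j+1}\) from every essential intersection of \(p_j\cap p_{j+1}\) to every one of \(p_{j+1}\cap p_{j+2}\), so that any compatible ordering places all index-\(j\) points before all index-\((j+1)\) points; this is what guarantees the sequence has no inversions between consecutive values and hence can be sorted by the allowed commutations. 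You should make that inference explicit.

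The genuine gap is part (4) itself, on which your part (2) depends: you only sketch it, deferring to ``several local configurations'' and a ``careful geometric accounting'' that you never carry out, so as written the proof is incomplete. Moreover, the machinery you propose (pinning down the incoming step directions at \(v_1\) and \(v_2\) and tracking the three paths between heights \(b_1\) and \(b_2\)) is unnecessary; the bounds you already used in part (1) finish it immediately. If \(v_1=(a_1,b_1)\in p_i\cap p_{i+1}\) and \(v_2=(a_2,b_2)\in p_{i+1}\cap p_{i+2}\) with \(b_1\le b_2\), then since both lie on \(p_{i+1}\), whose \(x\)-coordinate weakly decreases as the path descends, we get \(a_1\le a_2\); also \(v_1\in p_i\) forces \(a_1\ge \mu_i+n-i\) and \(v_2\in p_{i+2}\) forces \(a_2\le \lambda_{i+2}+n-i-2\). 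Hence \(\mu_i+2\le\lambda_{i+2}\), giving a \(3\times2\) block in rows \(i,i+1,i+2\) and contradicting avoidance (this also subsumes your equality case \(b_1=b_2\)). Equivalently, and this is how the paper argues inside its compatibility check: every point of \(p_i\cap p_{i+1}\) has \(x\ge\mu_i+n-i\), every point of \(p_{i+1}\cap p_{i+2}\) has \(x\le\lambda_{i+2}+n-i-2<\mu_i+n-i\), and \(x\) is weakly decreasing along \(p_{i+1}\). With this two-line replacement for your part (4), your proof is complete and matches the paper's in substance.
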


\begin{proof}
  For the first statement, we need to show that there is no vertex \(v\)
  contained in three of the paths in \(\vec{p}\). Since \( \vec{p} \) is
  a noncrossing path family, it suffices to show that no vertex \(v\) is
  contained in three consecutive paths \(p_i, p_{i+1},\) and \(p_{i+2}\).
  Suppose, for the sake of contradiction, that \(v\) is such a vertex.

Since the vertex \(v=(x,y)\) is contained in the south-southwest lattice path \(p_i: A(\lambda)_i \to B(\mu)_i\), it must lie weakly to the right of \(B(\mu)_i = (\mu_i + n - i, 0)\) in the plane, which implies \(\mu_i + n-i \leq x.\)
Similarly, since \(v\) is contained in \(p_{i+2}\), it must lie weakly to the left of \(A(\lambda)_{i+2} = (\lambda_{i+2} + n-i-2, \infty)\), which implies \(x \leq \lambda_{i+2} + n-i-2.\)
Together, these conditions imply \(\mu_i + n-i \leq \lambda_{i+2} + n-i-2\), equivalently \(\mu_i \leq \lambda_{i+2}-2\).
However, 
since \(\lambda/\mu\) is \((3 \times 2)\)-avoiding, we have
\(\lambda_{i+2}-2 < \mu_{i}\).
Thus we have obtained a contradiction. It follows that the paths \(p_i\) and \(p_{i+2}\) can never intersect, which implies that no three paths in \(\vec{p}\) can share a vertex.

For the remaining statements, we construct a compatible ordering
\((u_1, \ldots, u_\ell)\) of the essential intersection points of
\(H\) for which the corresponding intersection sequence
\((i_1, \ldots, i_\ell)\) is weakly increasing. First, place all
essential intersection points of \(p_1\) and \(p_2\), then all
essential intersection points of \(p_2\) and \(p_3\), and so on. Each
time, place the vertices from the path \(p_i\) in the order in which
they appear in \(p_i\). By construction, the corresponding sequence
\((i_1, \ldots, i_\ell)\) is weakly increasing, so we only need to
check that this sequence is compatible with \(H\), i.e., that there
can never be a directed path in \(H\) from \(u_b\) to \(u_a\) for
\(a < b\). There are two cases to consider. If \(u_b\) and \(u_a\)
both belong to the same path \(p_i\), they have already been ordered
compatibly. Otherwise \(u_{a}\) lies in the intersection of paths
\(p_i\) and \(p_{i+1}\), while \(u_{b}\) belongs to the intersection
of \(p_{j}\) and \(p_{j+1}\) for \(j \geq i+1\). In particular,
\(u_b\) lies on a path \(p_{j+1}\) with \(j+1 \geq i+2\). Following a
similar argument as we did above, this implies \(u_b\) lies strictly
to the left of \(u_a\) in the plane, so there is no directed path from
\(u_b\) to \(u_a\).

Now we are ready to prove the remaining statements. Let
\(I \subseteq [n-1]\) be the set of integers appearing in
\((i_1, \ldots, i_\ell)\). Since \((i_1, \ldots, i_\ell)\) is weakly
increasing, by \Cref{lem:beta for wiring diagrams} and the fact that
\((1+\s_i)^k = 2^{k-1}(1+\s_i)\), we have
\[
  \beta(H) = (1+\s_{i_1})(1+\s_{i_2}) \cdots (1+\s_{i_\ell}) =
  2^\epsilon \prod_{i \in I} (1+\s_i) = 2^\epsilon \B{I},
\]
where \(\epsilon = \ell-|I|\). Since \(i \in I\) if and only if
\(p_i\) and \(p_{i+1}\) intersect, \( I \) and \( \epsilon \) are
uniquely determined by \(H\). Since
\( \theta(\beta(H)) = \theta(2^\epsilon \B{I}) = 2^\epsilon \T{I}\),
we have \( \epsilon=\epsilon(H) \) and \( \T{I} = \psi(H) \). This
shows the second statement. The third and the fourth statements follow
immediately from the construction of the ordering
\( (u_1,\dots,u_\ell) \) above.
\end{proof}

As mentioned in \Cref{sec: imms and tl setup}, \(\epsilon(H)\) is the
number of closed loops in \(H\). 

\begin{exam}\label{ex:computing beta for 3x2 avoider}
  Let $\lambda = (7,6,5,2,2)$ and $\mu=(5,1,1)$. Then the skew
  shape $\lambda / \mu$ is $(3 \times 2)$-avoiding with \( n=\ell(\lambda)=5 \).
  Recalling that $A(\lambda)_i = (\lambda_i + n-i, \infty)$ and
  $B(\mu)_i = (\mu_i + n-i, 0)$, we have
  $$A(\lambda)_1 = (11, \infty), \quad A(\lambda)_2 = (9, \infty), \quad
  A(\lambda)_3 = (7, \infty), \quad A(\lambda)_4 = (3, \infty), \quad
  A(\lambda)_5 = (2, \infty) \quad \text{and}$$
$$B(\mu)_1 = (9, 0), \quad B(\mu)_2 = (4, 0), \quad B(\mu)_3 = (3, 0), \quad B(\mu)_4 = (1, 0), \quad B(\mu)_5 = (0, 0).$$
Let $H$ be the $(\lambda, \mu)$-subnetwork from \Cref{fig:3-2 avoiding network}. Observe that $\epsilon(H)=2$, since there are two closed loops in $H$.
Also observe that
\[\beta(H) = (1+\s_{2})^3(1+\s_{3})(1+\s_{4}) = 2^2 \cdot \B{\{2,3,4\}}. \]
After the initial essential intersection point $(7,9)$ of $p_2$ and $p_3$, the path $p_2$ cannot cross $p_1$. The remaining vertices in $p_2$ lie strictly to the left of $B_1$, which is the destination of the path $p_1$;
see \Cref{fig:3-2 avoiding network}. 
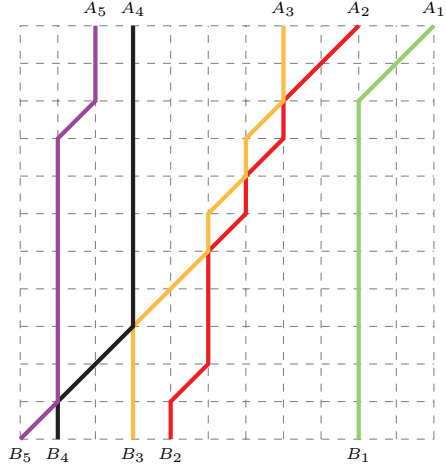
\begin{figure}
    \centering
    \[\begin{array}{ccc}
    \begin{tikzpicture}[scale=.5,baseline={(0,1)}]
    \draw[dashed,gray] (0,0) grid (11, 11);
    \node[above] at (11, 11) {\tiny $A_1$};
    \node[above] at (9, 11) {\tiny $A_2$};
    \node[above] at (7, 11) {\tiny $A_3$};
    \node[above] at (3, 11) {\tiny $A_4$};
    \node[above] at (2, 11) {\tiny $A_5$};
    \node[below] at (9, 0) {\tiny $B_1$};
    \node[below] at (4, 0) {\tiny $B_2$};
    \node[below] at (3, 0) {\tiny $B_3$};
    \node[below] at (1, 0) {\tiny $B_4$};
    \node[below] at (0, 0) {\tiny $B_5$};
    \draw[ultra thick,YellowGreen] (11,11) -- (9, 9) -- (9,0);
    \draw[ultra thick,Red] (9,11) -- (7, 9) -- (7,8) -- (6,7) -- (6,6) -- (5,5) -- (5,2) -- (4,1) -- (4,0);
    \draw[ultra thick,Dandelion] (7,11) -- (7,9) -- (6,8) -- (6,7) -- (5,6) -- (5,5) -- (3,3) -- (3,0);
    \draw[ultra thick,Black] (3,11) -- (3,3) -- (1,1) -- (1,0);
    \draw[ultra thick,Purple] (2,11) -- (2,9) -- (1,8) -- (1,1) -- (0,0);
    \end{tikzpicture}
    \end{array} \]
    \caption{A $((7,6,5,2,2),(5,1,1))$-subnetwork $H$. Paths for the unique noncrossing path family covering $H$ have been highlighted in distinct colors.}
    \label{fig:3-2 avoiding network}
\end{figure}
\end{exam}

\begin{lem}\label{lem: beta for multinetworks}
  Suppose
  $\lambda^{(1)} / \mu^{(1)}, \ldots, \lambda^{(k)} / \mu^{(k)}$ are
  $(3 \times 2)$-avoiding skew shapes, with
  \( \ell(\lambda^{(i)})\le n \) for all \( i \). Let
  $\vec{H} = (H^{(1)}, \ldots, H^{(k)})$ be a tuple such that each
  \( H^{(i)} \) is a \( (\lambda^{(i)},\mu^{(i)}) \)-subnetwork.
  Then 
  \[
    \beta(\vec{H}) = 2^{\epsilon(\vec{H})} \B{I_1 \cap \cdots \cap I_k},
  \]
  where
  $\epsilon(\vec{H}) = \epsilon(H^{(1)}) + \cdots + \epsilon(H^{(k)})$,
  and for all $1 \leq i \leq k$, the set \( I_i \subseteq [n-1] \) is the unique set such that
  \(\beta(H^{(i)}) = 2^{\epsilon(H^{(i)})}\B{I_i}\).
\end{lem}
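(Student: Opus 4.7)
The plan is to reduce to the single-network version via \Cref{lem:beta for ribbons}, expand the resulting product $\B{I_i}$ as an explicit sum of distinct permutations, and then count tuples of path families factor by factor.

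First I would apply \Cref{lem:beta for ribbons} to each $H^{(i)}$ to obtain
\[
\beta(H^{(i)}) \;=\; 2^{\epsilon(H^{(i)})} \B{I_i} \;=\; 2^{\epsilon(H^{(i)})} \sum_{J \subseteq I_i} \sigma_J,
\]
where I write $\sigma_J := \prod_{j \in J} \s_j$ with factors taken in increasing order. The discussion preceding \Cref{lem:beta for ribbons} shows that the permutations $\sigma_J$ are pairwise distinct as $J$ varies over subsets of $[n-1]$, so comparing coefficients in $\CC[\sn]$ tells me that the number of $(\lambda^{(i)}, \mu^{(i)})$-path families covering $H^{(i)}$ with type $\sigma_J$ equals $2^{\epsilon(H^{(i)})}$ when $J \subseteq I_i$, and $0$ otherwise.

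Next I would unpack the definition \eqref{eq:def of multibeta} of $\beta(\vec H)$. A tuple $\underline{\vec p} = (\vec p^{(1)}, \ldots, \vec p^{(k)})$ with common type $\sigma_J$ and each $\vec p^{(i)}$ covering $H^{(i)}$ exists precisely when $\sigma_J$ is simultaneously achievable at every factor, i.e., when $J \subseteq I_i$ for all $i$, equivalently $J \subseteq I_1 \cap \cdots \cap I_k$. In that case, the $\vec p^{(i)}$ may be chosen independently across $i$, producing
\[
\prod_{i=1}^k 2^{\epsilon(H^{(i)})} \;=\; 2^{\epsilon(\vec H)}
\]
such tuples of type $\sigma_J$. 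Summing contributions over all feasible $J$ then yields
\[
\beta(\vec H) \;=\; 2^{\epsilon(\vec H)} \sum_{J \subseteq I_1 \cap \cdots \cap I_k} \sigma_J \;=\; 2^{\epsilon(\vec H)} \B{I_1 \cap \cdots \cap I_k},
\]
as claimed.

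I do not anticipate any real obstacle here: once \Cref{lem:beta for ribbons} has done the heavy lifting at each factor, the multinetwork statement is a clean consequence of the injectivity of $J \mapsto \sigma_J$ together with the independence of choices across the $k$ factors. The one point that deserves explicit mention is this injectivity, without which one could not pass from the equality of elements of $\CC[\sn]$ to a coefficient-by-coefficient comparison, and hence could not multiply the counts $2^{\epsilon(H^{(i)})}$ across $i$ into a single coefficient for the multinetwork.
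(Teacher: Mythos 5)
Your proposal is correct and is essentially the paper's own argument: the paper packages your coefficient-by-coefficient comparison as the identity $\beta(\vec H)=\beta(H^{(1)})\star\cdots\star\beta(H^{(k)})$ for the entrywise product $\star$ on $\CC[\sn]$ and then proves $\B{I}\star\B{I'}=\B{I\cap I'}$ using exactly the expansion $\B{I}=\sum_{J\subseteq I}\s_J$ and the injectivity of $J\mapsto\s_J$. The only nitpick is the citation for that injectivity: the discussion preceding \Cref{lem:beta for ribbons} only gives $\B{I}=\B{I'}\iff I=I'$, whereas the distinctness of the individual permutations $\s_J$ is a separate (easy) fact, which the paper verifies inside its proof by noting that $\min(J)$ is the smallest non-fixed point of $\s_J$.
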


\begin{proof}
  Let $\star$ be the product in $\CC[\sn]$ defined by
  $u \star w = u\delta_{u,w}$ for $u,w \in \sn$ and extended by
  linearity. By the definition of \( \beta(\vec{H}) \) in
  \eqref{eq:def of multibeta}, we have
\[
  \beta(\vec{H}) = \beta (H^{(1)}) \star \cdots \star
  \beta(H^{(k)}).
\]
By \Cref{lem:beta for ribbons}, for each \( i\in [k] \), we have
$\beta(H^{(i)}) = 2^{\epsilon(H^{(i)})}\B{I_i}$ for a unique set \( I_i \subseteq [n-1] \).
Thus, the above equation can be written as
    \begin{equation}
    \beta(\vec{H}) 
    = 2^{\epsilon(H^{(1)})} \B{I_1} \star  \cdots \star  2^{\epsilon(H^{(k)})} \B{I_k}  
    = 2^{\epsilon(\vec{H})} \left( \B{I_1} \star \cdots \star \B{I_k}\right). \label{eq: union expression}
    \end{equation}

Now observe that for any \( I\subseteq [n-1] \),
\begin{align*}
    \B{I} &= \prod_{i \in I} (1+\s_i)
    = \sum_{A \subseteq I} \s_A,
\end{align*}
where \( \s_A = \prod_{i\in A}\s_i \). As before, products over
subsets are taken in increasing order from left to right.
For any sets
\( A,B\subseteq [n-1] \), we have \( \s_A = \s_B \) if and only if
\( A=B \). This can easily be seen from the observation that
\( \min(A) \) is the smallest integer \( i \) such that
\( \s_A(i)\ne i \). Thus, for any sets $I,I'\subseteq [n-1]$, we have
\begin{align}
\B{I} \star  \B{I'} 
= \left(\sum_{A \subseteq  I} \s_A \right) 
\star  
\left(\sum_{B \subseteq  I'} \s_B \right)
= \sum_{C \subseteq I \cap I'} \s_C 
= \B{I \cap I'}. 
\label{eq:hadamard-is-int}
\end{align}
By \eqref{eq: union expression} and
\eqref{eq:hadamard-is-int}, we obtain the lemma.
\end{proof}

\begin{exam}
Consider the multinetwork presented in \Cref{fig:example of a multinetwork}. In this case, we have
\[\begin{array}{llll}
    \beta(H^{(1)}) &= (1+\s_1)^2(1+\s_2)(1+\s_4) &= 2 \cdot \B{\{1,2,4\}}, & \epsilon(H^{(1)}) = 1; \\
    \beta(H^{(2)}) &= (1+\s_1)(1+\s_4) &= \B{\{1,4\}}, & \epsilon(H^{(2)}) = 0; \\
    \beta(H^{(3)}) &= (1+\s_1)(1+\s_2)^3(1+\s_4) &= 4 \cdot \B{\{1,2,4\}}, & \epsilon(H^{(3)}) = 2.
\end{array}\]
In addition,
\begin{align*}
    \beta(\vec{H}) &= 2^3(1+\s_1)(1+\s_4) \\
    &= 8 \cdot \B{\{1,4\}} \\
    &= \left( 2 \cdot \B{\{1,2,4\}} \right)
    \star
    \B{\{1,4\}}
    \star 
    \left( 4 \cdot \B{\{1,2,4\}} \right).
    \end{align*}
\end{exam}

\begin{thm}\label{thm: TL multinetwork expansion}
  Let \( \tau\in \K_n \) and suppose
  $\lambda^{(1)} / \mu^{(1)}, \ldots, \lambda^{(k)} / \mu^{(k)}$ are
  $(3 \times 2)$-avoiding skew shapes, with
  \( \ell(\lambda^{(i)})\le n \) for all \( i \). If $\tau=\T{I}$ for
  some $I \subseteq [n-1]$, we have
  \[
    \imm_{\tau} \left( \tJT_{\lambda^{(1)} /
        \mu^{(1)}}(\vec{x}^{(1)}) * \cdots * \tJT_{\lambda^{(k)} /
        \mu^{(k)}}(\vec{x}^{(k)}) \right) = \sum_{\vec{H}}
    2^{\epsilon(\vec{H})} \wt_{\underline{\vec x}}(\vec{H}),
  \]
  where the sum is over all $(\underline{\lambda}, \underline{\mu})$-multinetworks
  $\vec{H}$ with $\beta(\vec{H}) = 2^{\epsilon(\vec{H})} \B{I}$.
  If $\tau$ is not of this form, the immanant is $0$.
  
  In particular, this immanant is always $m$-positive.
\end{thm}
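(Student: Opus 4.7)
The strategy is to start from the multinetwork expansion in \eqref{eq: hadamard second expansion} with $f = f_\tau$ and then simplify the summand $f_\tau(\beta(\vec{H}))$ using the identification of $\beta(\vec{H})$ already provided by \Cref{lem: beta for multinetworks}. All the combinatorial heavy lifting is contained in \Cref{lem:beta for ribbons} and \Cref{lem: beta for multinetworks}, so what remains is essentially bookkeeping together with the observation that evaluating $f_\tau$ on the elements $\B{J}$ is trivial.

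Concretely, I would apply \Cref{lem: beta for multinetworks} to write $\beta(\vec{H}) = 2^{\epsilon(\vec{H})} \B{J(\vec{H})}$ for a uniquely determined subset $J(\vec{H}) = I_1 \cap \cdots \cap I_k \subseteq [n-1]$. By linearity, $f_\tau(\beta(\vec{H})) = 2^{\epsilon(\vec{H})} f_\tau(\B{J(\vec{H})})$, and by the definition of $f_\tau$ the quantity $f_\tau(\B{J(\vec{H})})$ is the coefficient of $\tau$ in $\theta(\B{J(\vec{H})}) = \T{J(\vec{H})}$. Since $\K_n$ is a basis of $\TL_n(2)$ and $\T{J(\vec{H})}\in \K_n$, this coefficient is $1$ when $\tau = \T{J(\vec{H})}$ and $0$ otherwise. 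Combining this with the injectivity $\T{I} = \T{I'} \iff I = I'$ recorded just before \Cref{lem:beta for ribbons}, I would conclude that when $\tau = \T{I}$ the surviving multinetworks are precisely those with $J(\vec{H}) = I$, i.e., those with $\beta(\vec{H}) = 2^{\epsilon(\vec{H})}\B{I}$, while when $\tau$ is not of the form $\T{I}$ every summand vanishes.

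Plugging this back into \eqref{eq: hadamard second expansion} yields the two cases in the theorem. For the concluding $m$-positivity assertion, I would note that the resulting expression is a nonnegative integer combination of monomials $\wt_{\underline{\vec{x}}}(\vec{H})$ in the variables $\underline{\vec{x}}$; since the immanant is a multi-symmetric function, collecting monomials lying in a common $\mathfrak{S}_\infty^k$-orbit expresses it as a nonnegative integer combination of products $m_{\lambda^{(1)}}(\vec{x}^{(1)}) \cdots m_{\lambda^{(k)}}(\vec{x}^{(k)})$, as desired. There is no genuine obstacle beyond carefully aligning the two uniqueness statements (for $\B{\cdot}$ and for $\T{\cdot}$) when translating the combinatorial support condition on the sum.
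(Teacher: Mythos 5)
Your proposal is correct and follows essentially the same route as the paper's proof: expand via \eqref{eq: hadamard second expansion}, invoke \Cref{lem: beta for multinetworks} to write $\beta(\vec{H}) = 2^{\epsilon(\vec{H})}\B{I'}$, apply $\theta$, and use the distinctness of the elements $\T{I'}$ (which are Kauffman basis elements) to extract the coefficient of $\tau$. The only addition is your explicit justification of the final $m$-positivity claim, which the paper leaves implicit but which matches the intended reasoning.
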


\begin{proof}
Using \eqref{eq: hadamard second expansion}, we first write this immanant as 
    \[\imm_{\tau} \left( \tJT_{\lambda^{(1)} / \mu^{(1)}}(\vec{x}^{(1)}) *\cdots *
    \tJT_{\lambda^{(k)} / \mu^{(k)}}(\vec{x}^{(k)}) \right)
    = \sum_{\vec{H}} f_\tau(\beta(\vec{H})) \wt_{\underline{\vec x}}(\vec{H}),\]
where the sum is over all $(\underline{\lambda}, \underline{\mu})$-multinetworks $\vec{H}$.
Recall that $f_\tau(\beta(\vec{H}))$ is the coefficient of $\tau$ in $\theta(\beta(\vec{H}))$. 

Suppose that $\tau=\T{I}$ for some $I \subseteq [n-1]$.
By \Cref{lem: beta for multinetworks}, each $\beta(\vec{H})$ is of the form $2^{\epsilon(\vec{H})} \B{I'}$ for some $I' \subseteq [n-1]$. 
By definition,
\[\theta(2^{\epsilon(\vec{H})} \B{I'}) = 2^{\epsilon(\vec{H})} \T{I'}.\]
Since \( \tau(I) = \tau(I') \) if and only if  \( I=I' \),
the above implies that $f_{\T{I}}(\B{I'}) = 2^{\epsilon(\vec{H})}$ if $I=I'$ and $f_{\T{I}}(\B{I'}) = 0$ otherwise, so the claim follows. 

If \( \tau \) is not of the form \( \T{I} \) with \( I\subseteq [n-1] \), then the above argument also shows that the immanant is \( 0 \).
\end{proof}

We can now obtain \Cref{thm: TL pos for 3x2 avoiding} from the introduction as a corollary.

\begin{cor}\label{cor:all TL imms s positive}
  For any $\tau \in \K_n$ and any $(3 \times 2)$-avoiding skew shapes
  $\lambda^{(1)} / \mu^{(1)}, \ldots, \lambda^{(k)} / \mu^{(k)}$ with
  \( \ell(\lambda^{(i)})\le n \) for all \( i \), the immanant
  \[ \imm_\tau \left( \tJT_{\lambda^{(1)} /
        \mu^{(1)}}(\vec{x}^{(1)}) *\cdots *
      \tJT_{\lambda^{(k)} / \mu^{(k)}}(\vec{x}^{(k)})
    \right)
\] is Schur positive.
\end{cor}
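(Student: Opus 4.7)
The plan is to upgrade the monomial-positive expansion provided by \Cref{thm: TL multinetwork expansion} to a Schur-positive one by regrouping the sum over multinetworks according to the intersection decomposition from \Cref{lem: beta for multinetworks}, and then invoking the Rhoades--Skandera Schur positivity result for Temperley--Lieb immanants of a single Jacobi--Trudi matrix.

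First I would handle the trivial case: if $\tau$ is not of the form $\tau(I)$ for some $I \subseteq [n-1]$, then \Cref{thm: TL multinetwork expansion} gives immanant $0$, so we may assume $\tau = \tau(I)$. Combining \Cref{thm: TL multinetwork expansion} with \Cref{lem: beta for multinetworks}, the immanant in question equals
\[
  \sum_{(H^{(1)},\dots,H^{(k)})} 2^{\epsilon(H^{(1)}) + \cdots + \epsilon(H^{(k)})}\, \wt_{\vec x^{(1)}}(H^{(1)}) \cdots \wt_{\vec x^{(k)}}(H^{(k)}),
\]
where the sum ranges over tuples in which each $H^{(i)}$ is a $(\lambda^{(i)},\mu^{(i)})$-subnetwork and the sets $I_i \subseteq [n-1]$ uniquely determined by $\beta(H^{(i)}) = 2^{\epsilon(H^{(i)})}\B{I_i}$ satisfy $I_1 \cap \cdots \cap I_k = I$.

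Second, I would perform this sum in two stages: first choose a tuple $(I_1,\dots,I_k)$ with $I_1 \cap \cdots \cap I_k = I$, then independently choose each $H^{(i)}$ compatible with $I_i$. Since the variable sets $\vec x^{(1)},\dots,\vec x^{(k)}$ are disjoint, the expression factors as
\[
  \sum_{\substack{I_1,\dots,I_k \subseteq [n-1] \\ I_1 \cap \cdots \cap I_k = I}}
  \prod_{i=1}^{k} \imm_{\tau(I_i)}\bigl(\tJT_{\lambda^{(i)}/\mu^{(i)}}(\vec x^{(i)})\bigr),
\]
where the $i$-th inner sum has been recognized as the single-matrix Temperley--Lieb immanant $\imm_{\tau(I_i)}\bigl(\tJT_{\lambda^{(i)}/\mu^{(i)}}(\vec x^{(i)})\bigr)$ via \Cref{thm: TL multinetwork expansion} applied with $k=1$.

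To conclude, each factor $\imm_{\tau(I_i)}\bigl(\tJT_{\lambda^{(i)}/\mu^{(i)}}(\vec x^{(i)})\bigr)$ is a Temperley--Lieb immanant of a single dual Jacobi--Trudi matrix, hence Schur positive in $\vec x^{(i)}$ by the Rhoades--Skandera theorem \cite{RHOADES2006793}. A product of Schur-positive functions in disjoint variable sets is Schur positive in the tensor product basis $\{s_{\lambda^{(1)}}(\vec x^{(1)}) \cdots s_{\lambda^{(k)}}(\vec x^{(k)})\}$, and a nonnegative sum of such products remains Schur positive, giving the claim. I do not foresee a real obstacle here: the essential combinatorial work is already encapsulated in \Cref{lem: beta for multinetworks} (which in turn uses the $(3\times 2)$-avoidance hypothesis through \Cref{lem: beta for ribbons}), and the analytic input is the known Schur positivity for a single Jacobi--Trudi matrix; the only delicate step is the refactoring of the multinetwork sum into a product of single-matrix contributions, which follows directly from the uniqueness of the sets $I_i$ and the multiplicativity of $\epsilon(\vec H)$ and $\wt_{\underline{\vec x}}(\vec H)$.
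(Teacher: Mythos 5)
Your proposal is correct and follows essentially the same route as the paper: reduce to $\tau=\tau(I)$ via \Cref{thm: TL multinetwork expansion}, regroup the multinetwork sum by the sets $I_1,\dots,I_k$ with $I_1\cap\cdots\cap I_k=I$ using \Cref{lem: beta for multinetworks}, recognize each factor as a single-matrix Temperley--Lieb immanant (the $k=1$ case of \Cref{thm: TL multinetwork expansion}), and conclude by the Rhoades--Skandera Schur positivity of Temperley--Lieb immanants of a single Jacobi--Trudi matrix. No gaps; this matches the paper's argument step for step.
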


\begin{proof}
  By \Cref{thm: TL multinetwork expansion}, we may assume that \( \tau=\tau(I) \)
  for some \( I\subseteq [n-1] \).
  Recall that a
  \((\underline{\lambda}, \underline{\mu})\)-multinetwork \(\vec{H}\)
  is a tuple \((H^{(1)}, \ldots, H^{(k)})\) in which each \(H^{(i)}\)
  is a \((\lambda^{(i)} ,\mu^{(i)})\)-subnetwork. By \Cref{lem: beta
    for multinetworks}, the condition 
  \(\beta(\vec{H}) = 2^{\epsilon(\vec{H})}\B{I}\) is equivalent to
  \(I_1 \cap \cdots \cap I_k = I\), where
  \(\beta(H^{(i)})=2^{\epsilon(H^{(i)})}\B{I_i}\) for each \( i \).
  Therefore, the equation in \Cref{thm: TL multinetwork expansion} can be
  rewritten as
  \begin{multline}
        \imm_{\tau} \left( \tJT_{\lambda^{(1)} /
        \mu^{(1)}}(\vec{x}^{(1)}) * \cdots * \tJT_{\lambda^{(k)} /
        \mu^{(k)}}(\vec{x}^{(k)}) \right) \\
    = \sum_{\substack{I_1,\dots,I_k \subseteq [n-1] \\ I_1 \cap \cdots \cap I_k = I}} \,
     \prod_{i=1}^k
    \left( \sum_{\substack{H^{(i)} \\ \beta(H^{(i)}) = 2^{\epsilon(H^{(i)})}\B{I_i}}} 
    2^{\epsilon(H^{(i)})} \wt_{\vec{x}^{(i)}}(H^{(i)}) \right) .
    \label{eq:network expansion verbose}
    \end{multline}
    By \Cref{thm: TL multinetwork expansion}, for each $i \in [n]$, we
    have
    \[
       \sum_{\substack{H^{(i)} \\ \beta(H^{(i)}) = 2^{\epsilon(H^{(i)})}\B{I_i}}} 
    2^{\epsilon(H^{(i)})} \wt_{\vec{x}^{(i)}}(H^{(i)}) 
    = \imm_{\T{I_i}} \left( \tJT_{\lambda^{(i)} / \mu^{(i)}}(\vec{x}^{(i)}) \right).\]
By \cite[Proposition 3]{RHOADES2006793} and \cite[Proposition 5 and the paragraph above it]{RHOADES2006793}, each Temperley--Lieb immanant of a single Jacobi--Trudi matrix is Schur positive. Thus the expression in \eqref{eq:network expansion verbose} is Schur positive.
\end{proof}

\subsection{Schur expansion of ribbon Hadamard products}\label{sec:step2 s expansion}

In this subsection, we provide a more explicit description of the Schur expansion from \Cref{cor:all TL imms s positive} in the case where all of the skew shapes $\lambda^{(i)} / \mu^{(i)}$ are ribbons. The description is given in terms of standard Young tableaux with restricted descent sets.

We begin with the following lemma, which guarantees that the powers of
$2$ from \Cref{thm: TL multinetwork expansion} do not appear in the
ribbon case.

\begin{lem}\label{lem: epsilon for ribbons}
    Suppose $\lambda / \mu$ is a ribbon.
    Then any $(\lambda, \mu)$-subnetwork $H$ satisfies $\epsilon(H) = 0$.
\end{lem}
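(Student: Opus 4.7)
The plan is to show $\epsilon(H) = 0$ by combining \Cref{lem:beta for ribbons} with the additional rigidity imposed by the ribbon condition. Since every ribbon is $(2 \times 2)$-avoiding and hence $(3\times 2)$-avoiding, \Cref{lem:beta for ribbons} applies and produces a unique $I \subseteq [n-1]$ with $\beta(H) = 2^{\epsilon(H)} \B{I}$. Tracing through its proof shows that $\epsilon(H) = \ell - |I|$, where $\ell$ is the total number of essential intersection points in the unique noncrossing path family $\vec{p} = (p_1,\dots,p_n)$ covering $H$, and $|I|$ is the number of indices $i$ for which $p_i$ and $p_{i+1}$ share a vertex. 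Thus it suffices to prove that each intersecting consecutive pair contributes exactly one essential intersection point.

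To exploit the ribbon condition, I would use the identity $\mu_i = \lambda_{i+1} - 1$, which follows from connectivity together with $(2 \times 2)$-avoidance. This forces the x-coordinate of the endpoint $B(\mu)_i$ of $p_i$ to coincide with the x-coordinate of the starting point $A(\lambda)_{i+1}$ of $p_{i+1}$; call this common value $x_0 = \mu_i + n - i$. Since $p_i$ must eventually reach $(x_0, 0)$ and its x-coordinate is monotone along the path, $p_i(y) \ge x_0$ for every height $y$ it occupies. Dually, $p_{i+1}$ emanates from $(x_0, \infty)$ and can only move weakly left, giving $p_{i+1}(y) \le x_0$. Because $\vec p$ is noncrossing and $p_i$ starts strictly to the right of $p_{i+1}$, we have $p_i(y) \ge p_{i+1}(y)$ at every height; equality therefore forces both to equal $x_0$. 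Consequently $p_i \cap p_{i+1}$ is a (possibly empty) connected vertical segment on the line $x = x_0$.

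It remains to verify that such a segment contributes at most one essential intersection point. At its topmost vertex, $p_i$ must have arrived from the right by a southwest step (since at the height directly above, $p_i$'s x-coordinate exceeds $x_0$), while $p_{i+1}$ must have arrived from directly above by a south step from $(x_0, \cdot)$, a vertex not lying on $p_i$; both previous vertices therefore sit outside the common segment, so this top vertex is essential. At every lower vertex of the segment, the previous vertex of $p_i$ is obtained by a south step from within the segment and so already belongs to $p_i \cap p_{i+1}$, failing the essential condition. Hence $\ell = |I|$ and $\epsilon(H) = 0$. I expect the main obstacle to be executing the step-direction bookkeeping carefully so that the monotonicity claims for $p_i(y)$ and $p_{i+1}(y)$, together with the noncrossing condition, are tied rigorously to the ribbon identity $\mu_i = \lambda_{i+1} - 1$, rather than relying on a picture.
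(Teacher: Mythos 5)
Your proposal is correct and takes essentially the same route as the paper: the ribbon identity $\mu_i=\lambda_{i+1}-1$ forces every common vertex of $p_i$ and $p_{i+1}$ onto a single vertical line $x=\mu_i+n-i$, and the monotonicity of south--southwest paths then gives at most one essential intersection point per consecutive pair, hence no loops and $\epsilon(H)=0$ (the paper phrases the reduction as ``no loops'' rather than via $\epsilon(H)=\ell-|I|$, but these are the same). The only detail to add is the paper's one-line observation that intersecting pairs can occur only for $i$ with $a\le i\le b-1$, where $[a,b]=\{i:\lambda_i>\mu_i\}$, since the identity $\mu_i=\lambda_{i+1}-1$ you invoke holds only in that range, while the remaining paths are vertical and never meet their neighbors.
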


\begin{proof}
  As in the proof of \Cref{lem:beta for ribbons}, let
  \(\vec{p}=(p_1,\dots,p_n)\) be the unique noncrossing path family
  covering \(H\), where \( n \) is an integer with
  \( \ell(\lambda)\le n \). It suffices to show that each pair
  \((p_i, p_{i+1})\) of paths in \(\vec{p}\) has at most one essential
  intersection point, because this implies that there are no loops in \( H \).
  
  To show this, first observe that because \(\lambda / \mu\) is a
  ribbon, it satisfies \(\mu_i = \lambda_{i+1}-1\) for all
  \(a\le i\le b-1\), where \( [a,b] = \{i:\lambda_i>\mu_i\} \).
  Then, let
  \(v = (x,y)\) be an essential intersection point of \(p_i\) and
  \(p_{i+1}\) 
  for some \( i \in [a,b-1]\).
  Note that $i$ lies in this range because the paths $p_k$ for \( k\le a-1 \)
  or \( k\ge b+1 \) are vertical paths consisting of south steps.
  Since \(v\) lies on the path
  \(p_i: A(\lambda)_i \to B(\mu)_i\), it must lie weakly to the right
  of \(B(\mu)_i = (\mu_i + n-i, 0)\) in the plane. Since \(v\)
  lies on the path \(p_{i+1}: A(\lambda)_{i+1} \to B(\mu)_{i+1}\), it
  must lie weakly to left of
  \(A(\lambda)_{i+1} = (\lambda_{i+1} + n-i-1, \infty)\). The latter two
  conditions together imply that
  \[
    \lambda_{i+1} + n-i-1 \leq x \leq \mu_i + n-i.
  \]
  Since \(\mu_i = \lambda_{i+1}-1\), this implies
  \(x = \lambda_{i+1} + n-i-1 = \mu_i + n-i\). Therefore all essential
  intersection points of \(p_i\) and \(p_{i+1}\) must lie on a fixed
  vertical line. Since both paths move only south and southwest in the
  plane, there is at most one essential intersection point.
\end{proof}

We now establish some notation that will be useful in translating between Temperley--Lieb types and standard Young tableaux.
For a
ribbon \( R \) of size \( m \), label the cells sequentially
from \( 1 \) to \( m \), starting from the top-right cell. A
\emph{descent} of \( R \) is an integer \( i\in [m-1] \) such that the
cell labeled \( i \) is positioned immediately above the cell labeled
\( i + 1 \). We denote by \(\Des(R)\) the set of descents of \( R \).
For a word \( u=u_1 \cdots u_n = (u_1,\dots,u_m)\in \ZZ_{\ge1}^m \),
we also define \( \Des(u) \) to be the set of integers
\( i\in [m-1] \) such that \( u_i>u_{i+1} \), and we denote
\( \vec x_u = x_{u_1} \cdots x_{u_m} \).

Suppose that \(R = \lambda / \mu\) is a ribbon
with \( \{i:\lambda_i>\mu_i\} = [a,b] \).
Then
\begin{equation}\label{eq:3}
  \Des(R) = \left\{\sum_{j=1}^{i} (\lambda_j-\mu_j) : i\in [a,b-1]
  \right\}.
\end{equation}
For any set \(I \subseteq \NN\), we define
\begin{equation}\label{eq:d(I,R)}
  d(I,R) = \left\{\sum_{j=1}^{i} (\lambda_j-\mu_j) : i \in I
  \right\}.
\end{equation}
Note that \( d([a,b-1], R) = \Des(R) \).
Lastly, we denote $\NDes(R) := [m-1] \setminus \Des(R).$

\begin{lem}\label{thm:subnetworks-to-words}
  Suppose \(R = \lambda/\mu\) is a ribbon of size \(m\) with
  \( \{i:\lambda_i>\mu_i\} = [a,b] \). Then, for any set
  \(I \subseteq [a,b-1]\), there is a bijection between the set of
  \(R\)-subnetworks \(H\) with \(\beta(H)=\B{I}\) and the set of words
  \(u \in \mathbb{Z}_{\geq 1}^m \) satisfying
  \(\Des(u) = \NDes(R) \cup d(I,R)\).
\end{lem}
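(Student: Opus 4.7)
The plan is to construct an explicit bijection by reading off the heights of the southwest steps from the unique noncrossing path family covering $H$. Given $H$ with $\beta(H) = \B{I}$, let $\vec{p} = (p_a, \ldots, p_b)$ be the unique noncrossing path family covering $H$. Each path $p_i$ contains exactly $\lambda_i - \mu_i$ southwest steps (its total horizontal displacement from $A(\lambda)_i$ to $B(\mu)_i$), so reading the heights of these southwest steps in order---first those of $p_a$ from top to bottom, then those of $p_{a+1}$, and so on---produces a word $u = (u_1,\dots,u_m) \in \mathbb{Z}_{\ge 1}^m$ whose block boundaries fall at the partial sums $d_i = \sum_{j \le i}(\lambda_j-\mu_j)$ for $i \in [a,b-1]$, which by \eqref{eq:3} is precisely $\Des(R)$.

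To establish the descent condition, I would split $[m-1]$ into positions interior to a block (those in $\NDes(R)$) and boundary positions (those in $\Des(R)$). Within a single path, consecutive southwest-step heights are strictly decreasing, since any intervening south step only lowers the starting height of the next southwest step; hence every interior position is automatically a descent of $u$. For a boundary position $k = d_i$, the key observation, borrowed from the proof of \Cref{lem: epsilon for ribbons}, is that any shared point of $p_i$ and $p_{i+1}$ must lie on the vertical line $x = \mu_i + n - i = \lambda_{i+1} + n - i - 1$. On this line, $p_i$ occupies heights $y \le u_{d_i}-1$ (what remains after its final southwest step), while $p_{i+1}$ occupies heights $y \ge u_{d_i+1}$ (what it traverses before its first southwest step). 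Thus $p_i \cap p_{i+1} \neq \emptyset$ iff $u_{d_i} > u_{d_i+1}$, i.e., iff $d_i \in \Des(u)$. By \Cref{lem:beta for ribbons}(3) this occurs iff $i \in I$, so the descents of $u$ at boundary positions form exactly $d(I,R)$.

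For the inverse direction, given $u \in \mathbb{Z}_{\ge 1}^m$ with $\Des(u) = \NDes(R) \cup d(I,R)$, I would cut $u$ into blocks of sizes $\lambda_a-\mu_a, \ldots, \lambda_b-\mu_b$ along the positions in $\Des(R)$. Since the interior positions of each block all lie in $\NDes(R) \subseteq \Des(u)$, each block is strictly decreasing in $\mathbb{Z}_{\ge 1}$; its entries then serve as the southwest-step heights of a uniquely determined south-southwest path $p_i : A(\lambda)_i \to B(\mu)_i$, with south steps filling the gaps. Extending by trivial vertical paths outside $[a,b]$ yields a path family $\vec{p}$ whose edge-union is a subnetwork $H$. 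The same vertical-line analysis shows that $\vec{p}$ is automatically noncrossing (paths $p_i, p_j$ with $|i-j| \ge 2$ live in disjoint half-planes, while adjacent paths only touch the shared line $x = \mu_i + n - i$ without swapping sides) and that $p_i \cap p_{i+1} \neq \emptyset$ iff $d_i \in \Des(u)$, i.e., iff $i \in I$; by \Cref{lem:beta for ribbons}(3) this gives $\beta(H) = \B{I}$. The two constructions are evidently mutually inverse.

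I expect the boundary-descent calculation to be the main technical content, since it translates the geometric intersection condition into a single numerical inequality via the vertical-line restriction from \Cref{lem: epsilon for ribbons}; the interior descents and the reconstruction of each path from its block of southwest-step heights are essentially routine once that translation is in hand.
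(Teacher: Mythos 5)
Your proposal is correct and follows essentially the same route as the paper's proof: read off the southwest-step heights of the unique noncrossing path family to get a word whose within-path descents give $\NDes(R)$ and whose block-boundary descents, via the vertical-line argument forced by $\mu_i=\lambda_{i+1}-1$, record exactly the adjacent-path intersections and hence $d(I,R)$ by \Cref{lem:beta for ribbons}, with the inverse given by cutting $u$ along $\Des(R)$ and rebuilding the paths. The only cosmetic remark is that in the inverse direction the conclusion $\beta(H)=\B{I}$ (with no factor $2^{\epsilon(H)}$) also uses \Cref{lem: epsilon for ribbons}, which you in effect invoke already through the vertical-line restriction.
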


\begin{proof}
  Suppose \( \ell(\lambda)\le n \). Let \( X \) be the set of
  \( R \)-subnetworks \( H \) with \(\beta(H)=\B{I}\) and let \( Y \)
  be the set of words \(u \in \mathbb{Z}_{\geq 1}^m \) satisfying
  \(\Des(u) = \NDes(R) \cup d(I,R)\). We need to find a bijection from
  \( X \) to \( Y \).

  Consider \( H\in X \). Let \(\vec{p}=(p_1, \ldots, p_n)\) be the
  unique noncrossing path family that covers \(H\). Next, associate to
  each path \(p_i\) a decreasing word \(u(p_i)\), consisting of
  positive integers, recorded by tracking the \(y\)-coordinates of the
  starting points of southwest steps in \(p_i\), starting from the
  top. We then define \(u(\vec{p}) \in \ZZ^m_{\ge1}\) as the
  concatenation of the words \(u(p_1), \dots, u(p_n)\). We now show
  that \(\Des(u(\vec{p})) = \NDes(R) \cup d(I,R)\).

There are two types of descents in \(\Des(u(\vec{p})) \subseteq [m-1]\) to consider: 
\begin{itemize}
\item[(i)] descents that come from one of the words \(u(p_i)\), or 
\item[(ii)] descents that occur when the last entry of \(u(p_i)\) is greater than the first entry of \(u(p_{i+1})\). 
\end{itemize}
We claim that the descents of type (i) form the set $[m-1] \setminus \Des(R) = \NDes(R)$. 
We also claim that the descents of type (ii) form the set $d(I,R)$.
These two claims together will demonstrate that
\(\Des(u(\vec{p})) = \NDes(R) \cup d(I,R)\), 
as desired.

For the first claim, observe that since each
\(p_i:(\lambda_i+n-i, \infty) \to (\mu_i+n-i, 0)\) is a south-southwest
lattice path, each \(u(p_i)\) is a decreasing word. Since the \(x\)
coordinates of these points differ by \(\lambda_i - \mu_i\), each
\(p_i\) must take exactly \(\lambda_i-\mu_i\) southwest steps, so
\(u(p_i)\) is a word of length \(\lambda_i-\mu_i\). Thus \(u(p_i)\)
contributes the descents to \(u(\vec{p})\) in positions
\( N_{i-1}+1,N_{i-1}+2, \dots,N_{i}-1 \),
where \( N_i = \sum_{j=1}^{i} \left(\lambda_j-\mu_j\right) \).
Thus, by \eqref{eq:3},
the set of descents of type (i) is
\[
  [m-1] \setminus \{N_1,\dots,N_n\} = [m-1] \setminus \{N_a,\dots,N_{b-1}\} = [m-1] \setminus \Des(R),
\]
which shows the first claim.

For the second claim, suppose the last entry of \(u(p_i)\) is greater
than the first entry of \(u(p_{i+1})\). Since \(p_j\) is of length
\(\lambda_j-\mu_j\) for all \(j\), the corresponding descent in
\(u(\vec{p})\) is at position
\( N_i = \sum_{j=1}^{i} (\lambda_j - \mu_j)\). To prove the claim, we
must argue that this occurs if and only if \(i \in I\). Since \(p_i\)
has destination \((\mu_i+n-i, 0)\), the last southwest step of \(p_i\)
terminates with \(x\)-coordinate \(\mu_i+n-i\). Similarly, the first
southwest step of \(p_{i+1}\) originates with \(x\)-coordinate
\(\lambda_{i+1}+n-i-1\). Since \(\mu_i = \lambda_{i+1}-1\), the
vertical lines \(x = \lambda_{i+1} + n-i-1 \) and \(x = \mu_i + n-i\)
are identical. The fact that the last entry of \(u(p_i)\) is greater
than the first entry of \(u(p_{i+1})\) means that the last southwest
step of \(p_i\) originates strictly north of the first southwest step
of \(p_{i+1}\). It follows that \(p_i\) and \(p_{i+1}\) intersect. By
\Cref{lem:beta for ribbons}, since \(\beta(H)=\B{I}\), such an
intersection occurs if and only if \(i \in I\). This implies the
second claim.

By the two claims, \( H\mapsto u \) is a map from \( X \) to
\( Y \). To show that this is a bijection, we construct its inverse.
Observe that a south-southwest path \( p \) is determined by the
decreasing word \( u(p) \) defined earlier. Suppose
\( u = (u_1,\dots,u_n)\in Y \). First, let
\( p_1:A(\lambda)_1\to B(\mu)_1 \) be the unique path with
\( u(p_1) = (u_1,\dots,u_{N_1}) \). Then define
\( p_2:A(\lambda)_2\to B(\mu)_2 \) to be the unique path with
\( u(p_2) = (u_{N_1+1},\dots,u_{N_2}) \), and so on. In this way, we
obtain a sequence \( \vec p = (p_1,\dots,p_n) \) of paths. Let \( H \)
be the subnetwork covered by \( \vec p \). It is easy to check that
\( u\mapsto H \) is the desired inverse map.
  \end{proof}

\begin{exam}
Consider the subnetwork $H$ for the ribbon $R= (2,2,2,2,1)/(1,1,1)$ of size $m=6$ presented in \Cref{fig:subnetwork for a ribbon}.
Observe that $\Des(R) = \{1,2,3,5\}$, so $\NDes(R) = \{4\}$.
In addition, we have $\beta(H) = \B{\{1,4\}}$, so we set $I = \{1,4\}$.
In this case, $d(I,R) = d(\{1,4\}, R) = \{1,5\}$.
Applying the bijection from \Cref{thm:subnetworks-to-words}, we obtain 
    \begin{align*}
        u(p_1) = 4, \, u(p_2) = 2, \, u(p_3) = 3, \, u(p_4) = 63, \text{ and } u(p_5) = 1.
    \end{align*}
Thus $u(\vec{p}) = 4 2 3 63 1.$
We have $\Des(u(\vec{p})) = \{1, 4, 5\} = \{4\} \cup \{1,5\} = \NDes(R) \cup d(I,R)$, as desired.
\end{exam}

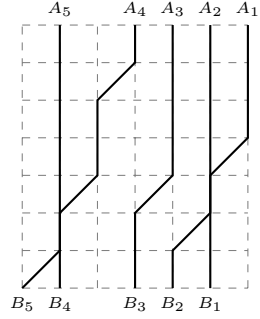
\begin{figure}
    \centering
    \begin{tikzpicture}[scale=.5,baseline={(0,1)}]
    \draw[dashed,gray] (0,1) grid (6, 8);
    \node[above] at (6, 8) {\tiny $A_1$};
    \node[above] at (5, 8) {\tiny $A_2$};
    \node[above] at (4, 8) {\tiny $A_3$};
    \node[above] at (3, 8) {\tiny $A_4$};
    \node[above] at (1, 8) {\tiny $A_5$};
    \node[below] at (5, 1) {\tiny $B_1$};
    \node[below] at (4, 1) {\tiny $B_2$};
    \node[below] at (3, 1) {\tiny $B_3$};
    \node[below] at (1, 1) {\tiny $B_4$};
    \node[below] at (0, 1) {\tiny $B_5$};
    \draw[thick] (6,8) -- (6,5) -- (5,4) -- (5,1);
    \draw[thick] (5,8) -- (5,3) -- (4,2) -- (4,1);
    \draw[thick] (4,8) -- (4,4) -- (3,3) -- (3,1);
    \draw[thick] (3,8) -- (3,7) -- (2,6) -- (2,4) -- (1,3) -- (1,1);
    \draw[thick] (1,8) -- (1,2) -- (0,1) -- (0,1);
    \end{tikzpicture}
    \caption{A subnetwork for the ribbon $R= (2,2,2,2,1)/(1,1,1).$}
    \label{fig:subnetwork for a ribbon}
\end{figure}

Let \(f^{\lambda/\mu}(A)\) denote the number of standard Young tableaux
of shape \(\lambda/\mu\) with descent set \(A\).

\begin{thm}\label{thm: explicit ribbon expansion in full generality}
  Let $R^{(1)}, \ldots, R^{(k)}$ be ribbons, where each
  \( R^{(i)} = \lambda^{(i)}/\mu^{(i)} \) satisfies
  \( \ell(\lambda^{(i)})\le n \). Then for any $I \subseteq [n-1]$, we
  have
\begin{align*}
    &\imm_{\T{I}} \left( \tJT_{R^{(1)}}( \vec{x}^{(1)})
    *\cdots * 
    \tJT_{R^{(k)}}(\vec{x}^{(k)}) \right) \\
    &= \sum_{\nu_1 \vdash m_1, \, \ldots, \, \nu_k \vdash m_k}
    \left(
    \sum_{
    \substack{
    I_i \subseteq [a_i,b_i-1] \\
    I_1 \cap \cdots \cap I_k = I
    }} \,
    \prod_{i=1}^k
    f^{\nu_i} \left( \NDes(R^{(i)}) \cup d(I_i, R^{(i)}) \right)
    \right)
    s_{\nu_1}(\vec{x}^{(1)})
    \cdots 
    s_{\nu_k}(\vec{x}^{(k)}),
\end{align*}
where $m_i$ is the size of $R^{(i)}$ and $[a_i, b_i-1] = \{j:\lambda^{(i)}_j>\mu^{(i)}_j\}$.
\end{thm}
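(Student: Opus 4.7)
The plan is to reduce the $k$-factor immanant on the left-hand side to a product of single-ribbon Temperley--Lieb immanants, and then expand each single-ribbon factor explicitly in the Schur basis. Concretely, from the proof of \Cref{cor:all TL imms s positive} — specifically the factorization \eqref{eq:network expansion verbose} combined with the identification of each inner sum as a single-ribbon immanant — we have
\[
\imm_{\T{I}}\!\left(\tJT_{R^{(1)}}(\vec{x}^{(1)}) * \cdots * \tJT_{R^{(k)}}(\vec{x}^{(k)})\right)
= \sum_{\substack{I_1, \ldots, I_k \subseteq [n-1] \\ I_1 \cap \cdots \cap I_k = I}} \prod_{i=1}^k \imm_{\T{I_i}}\!\left(\tJT_{R^{(i)}}(\vec{x}^{(i)})\right).
\]
It therefore suffices to produce an explicit Schur expansion of each single-ribbon factor.

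For this I would invoke \Cref{lem: epsilon for ribbons}, which forces $\epsilon(H)=0$ for every $R^{(i)}$-subnetwork, and then apply \Cref{thm: TL multinetwork expansion} in the $k=1$ case to obtain
\[
\imm_{\T{I_i}}\!\left(\tJT_{R^{(i)}}(\vec{x}^{(i)})\right)
= \sum_{\substack{H^{(i)} \\ \beta(H^{(i)}) = \B{I_i}}} \wt_{\vec{x}^{(i)}}(H^{(i)}).
\]
A brief geometric check — already implicit in the proof of \Cref{lem: epsilon for ribbons} — shows this sum is empty unless $I_i \subseteq [a_i, b_i - 1]$: for any $j$ outside $[a_i, b_i-1]$ at least one of the paths $p_j, p_{j+1}$ corresponds to a row with $\lambda^{(i)}_j = \mu^{(i)}_j$ and is thus purely vertical, so $p_j$ and $p_{j+1}$ cannot intersect, forcing $j \notin I_i$ by \Cref{lem:beta for ribbons}(3). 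For admissible $I_i$, \Cref{thm:subnetworks-to-words} provides a weight-preserving bijection between the surviving subnetworks and words $u \in \ZZ_{\ge 1}^{m_i}$ with $\Des(u) = \NDes(R^{(i)}) \cup d(I_i, R^{(i)})$, so
\[
\imm_{\T{I_i}}\!\left(\tJT_{R^{(i)}}(\vec{x}^{(i)})\right)
= \sum_{\substack{u \in \ZZ_{\ge 1}^{m_i} \\ \Des(u) = \NDes(R^{(i)}) \cup d(I_i, R^{(i)})}} \vec{x}^{(i)}_u.
\]

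The final ingredient is the classical identity
\[
\sum_{\substack{u \in \ZZ_{\ge 1}^m \\ \Des(u) = A}} \vec{x}_u = \sum_{\nu \vdash m} f^{\nu}(A)\, s_\nu(\vec x),
\]
which follows from RSK together with Sch\"utzenberger's theorem that the descent set of a word equals the descent set of its recording tableau. Applying this to each single-ribbon factor, substituting back, and interchanging the order of summation so that the outer sum runs over $(\nu_1, \ldots, \nu_k)$ yields the claimed formula. The main piece of bookkeeping is to verify that the restriction $I_i \subseteq [a_i, b_i-1]$ emerges automatically from the geometry of ribbon path families; this is the only genuinely new check, and it reduces to the vertical-path observation above.
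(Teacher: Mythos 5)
Your proposal is correct and follows essentially the same route as the paper's proof: factor the Hadamard-product immanant into single-ribbon Temperley--Lieb immanants via the argument of \Cref{cor:all TL imms s positive} (using \Cref{lem: epsilon for ribbons} to kill the powers of $2$), discard the sets $I_i$ meeting empty rows by the vertical-path observation, convert subnetworks to words by \Cref{thm:subnetworks-to-words}, and finish with RSK and an interchange of summation. No gaps; the only differences from the paper are cosmetic (e.g.\ citing the $k=1$ case of \Cref{thm: TL multinetwork expansion} explicitly and naming the descent-preservation property of RSK).
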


\begin{proof}
Following the proof of \Cref{cor:all TL imms s positive}, we have
\begin{align*}
\imm_{\T{I}} 
\left( \tJT_{R^{(1)}}( \vec{x}^{(1)})
*\cdots * 
\tJT_{R^{(k)}}(\vec{x}^{(k)} ) \right)
&= \sum_{\substack{I_1, \ldots, I_k \subseteq [n-1] \\ I_1 \cap \cdots \cap I_k = I}} \phantom{,} 
\prod_{i=1}^k
\imm_{\T{I_i}} \left( \tJT_{R^{(i)}} ( \vec{x}^{(i)}  ) \right) \\
&= \sum_{\substack{I_1, \ldots, I_k \subseteq [n-1] \\ I_1 \cap \cdots \cap I_k = I}} \phantom{,} 
\prod_{i=1}^k
 \sum_{H^{(i)}} \wt_{\vec{x}^{(i)}} ( H^{(i)} ) ,
\end{align*}
where the last sum above is over the \(R^{(i)}\)-subnetworks
\( H^{(i)} \) satisfying \(\beta(H^{(i)}) = \B{I_i}\). 
Note
that \(\epsilon(H^{(i)}) = 0\) for all \(i\) by \Cref{lem: epsilon for ribbons}.

Observe that the only nontrivial terms of the outer sum occur when
\(I_i \subseteq [a_i, b_i-1]\) for each \( i \). To see this, suppose
\(j \not\in [a_i, b_i-1]\) for some \( j\in I_i \). If \(\vec{p} = (p_1, \ldots, p_n)\) is a
\((\lambda^{(i)}, \mu^{(i)})\)-path family covering \(H^{(i)}\), the
path \(p_j\) is a vertical path because
\(\lambda^{(i)}_j = \mu^{(i)}_j\). The requirement that
\(\beta(H^{(i)}) = \B{I_i}\) would mean that \(p_j\) intersects
\(p_{j+1}\), which is impossible in this situation. Thus we may
rewrite the sum above as
\begin{equation*}
\sum_{
\substack{
I_i \subseteq [a_i,b_i-1] \\
I_1 \cap \cdots \cap I_k = I
}} \phantom{,} 
\prod_{i=1}^k
 \sum_{H^{(i)}} \wt_{\vec{x}^{(i)}} ( H^{(i)} ).
\end{equation*}
Applying \Cref{thm:subnetworks-to-words}, the above is equal to 
\begin{equation}\label{eq: before plugging in ssyt}
\sum_{
\substack{
I_i \subseteq [a_i,b_i-1] \\
I_1 \cap \cdots \cap I_k = I
}} \phantom{,} 
\prod_{i=1}^k
\sum_{ \substack{u^{(i)} \in \ZZ^{m_i}_{\geq 1}\\
\Des(u^{(i)}) = \NDes(R^{(i)}) \cup d(I_i, R^{(i)})
}} (\vec{x}^{(i)})_{u^{(i)}} . 
\end{equation}

It is well-known \cite[7.23.1~Lemma]{EC2} that if \( u \) corresponds to \( (P,Q) \) under Robinson--Schensted--Knuth (RSK) correspondence, then \( \Des(u) = \Des(Q) \). Thus, for each \(i \in [k]\), we have
\begin{align*}
\sum_{ \substack{
u^{(i)} \in \ZZ^{m_i}_{\geq 1} \\
\Des(u^{(i)}) = \NDes(R^{(i)}) \cup d(I_i, R^{(i)})
}} (\vec{x}^{(i)})_{u^{(i)}}
&=
\sum_{\nu_i \vdash m_i}
\sum_{\substack{Q \in \SYT(\nu_i) \\ \Des(Q) = \NDes(R^{(i)}) \cup d(I_i, R^{(i)})}}
 \sum_{P \in \SSYT(\nu_i)} (\vec{x}^{(i)})_{P}  \\
&= 
\sum_{\nu_i \vdash m_i}
f^{\nu_i} \left( \NDes(R^{(i)}) \cup d(I_i, R^{(i)}) \right)
s_{\nu_i}(\vec{x}^{(i)}).
\end{align*}
Substituting the above into \eqref{eq: before plugging in ssyt}, the immanant is equal to
\begin{align*}
&\sum_{
\substack{
I_i \subseteq [a_i,b_i-1] \\
I_1 \cap \cdots \cap I_k = I
}} \phantom{,} 
\prod_{i=1}^k
\left(
\sum_{\nu_i \vdash m_i}
f^{\nu_i} \left( \NDes(R^{(i)}) \cup d(I_i, R^{(i)}) \right)
s_{\nu_i}(\vec{x}^{(i)})
\right) \\
&= \sum_{\nu_1 \vdash m_1, \, \ldots, \, \nu_k \vdash m_k}
\left(
\sum_{
\substack{
I_i \subseteq [a_i,b_i-1] \\
I_1 \cap \cdots \cap I_k = I
}} \phantom{,} 
\prod_{i=1}^k
f^{\nu_i} \left( \NDes(R^{(i)}) \cup d(I_i, R^{(i)}) \right)
\right)
s_{\nu_1}(\vec{x}^{(1)})
\cdots 
s_{\nu_k}(\vec{x}^{(k)}),
\end{align*}
which completes the proof.
\end{proof}

As a conclusion to this subsection, we present some
corollaries of \Cref{thm: explicit ribbon expansion in full
  generality} that concern the case $R^{(1)} = \cdots = R^{(k)}$. 

\begin{cor}\label{cor:ribbon power tableau expansion}
  Let \( R=\lambda/\mu \) be a ribbon of size \( m \) with \( \ell(\lambda) \leq n \),
  and let \( I\subseteq [n-1] \). Then
\begin{align*}
&\imm_{\T{I}} \left( \tJT_{R}(\vec{x}^{(1)}) *\cdots * \tJT_{R}(\vec{x}^{(k)}) \right) \\
&= \sum_{\nu_1,\ldots, \nu_k \vdash m}
\left(
\sum_{\substack{A_1, \ldots, A_k \subseteq [m-1] \\
A_1 \cap \cdots \cap A_k = \NDes(R) \cup d(I,R)}}
f^{\nu_1} \left(A_1\right)
\cdots
f^{\nu_k} \left(A_k\right)
\right)
s_{\nu_1}(\vec{x}^{(1)})
\cdots 
s_{\nu_k}(\vec{x}^{(k)}).
\end{align*}
\end{cor}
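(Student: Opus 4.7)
The plan is to deduce this corollary from \Cref{thm: explicit ribbon expansion in full generality} by specializing to the case $R^{(1)} = \cdots = R^{(k)} = R$, and then translating the indexing of the inner sum via a bijection. Setting $[a,b-1] = \{j : \lambda_j > \mu_j\}$, we have $a_i = a$ and $b_i = b$ for every $i$, so \Cref{thm: explicit ribbon expansion in full generality} directly gives
\[
\sum_{\nu_1,\ldots,\nu_k \vdash m}
\Biggl(\,\sum_{\substack{I_1,\ldots,I_k \subseteq [a,b-1] \\ I_1 \cap \cdots \cap I_k = I}}
\prod_{i=1}^k f^{\nu_i}\bigl(\NDes(R) \cup d(I_i,R)\bigr)\Biggr)
s_{\nu_1}(\vec x^{(1)}) \cdots s_{\nu_k}(\vec x^{(k)}).
\]
It therefore suffices, for each fixed tuple $(\nu_1,\ldots,\nu_k)$, to match the inner sum above with the one appearing in the corollary.

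To do this, I would introduce the substitution $A_i := \NDes(R) \cup d(I_i, R)$. Since $i \mapsto \sum_{j=1}^i (\lambda_j - \mu_j)$ is strictly increasing on $[a,b-1]$ with image exactly $\Des(R)$ by \eqref{eq:3}, the map $J \mapsto d(J,R)$ is a bijection from $2^{[a,b-1]}$ onto $2^{\Des(R)}$. Composing with $J \mapsto \NDes(R) \cup J$ yields a bijection
\[
2^{[a,b-1]} \;\longrightarrow\; \bigl\{A \subseteq [m-1] : A \supseteq \NDes(R)\bigr\}, \qquad I_i \longmapsto A_i.
\]
Since this bijection sends intersections to intersections within $\Des(R)$, the identity
\[
\bigcap_{i=1}^k A_i \;=\; \NDes(R) \,\cup\, \bigcap_{i=1}^k d(I_i,R) \;=\; \NDes(R) \cup d\Bigl(\bigcap_{i=1}^k I_i,\, R\Bigr)
\]
shows that the constraint $I_1 \cap \cdots \cap I_k = I$ is equivalent to $A_1 \cap \cdots \cap A_k = \NDes(R) \cup d(I,R)$.

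The only remaining point is to check that the bijection produces all tuples $(A_1, \ldots, A_k)$ appearing in the target sum. This is immediate: if $A_1 \cap \cdots \cap A_k = \NDes(R) \cup d(I,R)$, then in particular each $A_i \supseteq \NDes(R)$, so each $A_i$ lies in the image of our bijection. To handle the edge case where $I \not\subseteq [a,b-1]$, I would observe that the LHS of the claimed identity is zero by \Cref{thm: explicit ribbon expansion in full generality}, while on the right $d(I,R)$ necessarily contains either $0$ or $m$, preventing $\NDes(R) \cup d(I,R)$ from being contained in $[m-1]$, so no valid $A_i$'s exist and the RHS is zero as well. The main point of the argument is really the bijection identity $\bigcap_i d(I_i,R) = d(\bigcap_i I_i, R)$, which is the only nontrivial step and follows directly from the injectivity of the map $i \mapsto \sum_{j=1}^i(\lambda_j - \mu_j)$ on $[a,b-1]$; no serious obstacle is expected.
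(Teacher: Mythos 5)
Your proposal is correct and follows essentially the same route as the paper: specialize \Cref{thm: explicit ribbon expansion in full generality} to $R^{(1)}=\cdots=R^{(k)}=R$, then use the bijection $c\mapsto\sum_{j=1}^{c}(\lambda_j-\mu_j)$ from $[a,b-1]$ to $\Des(R)$ to re-index via $A_i=\NDes(R)\cup d(I_i,R)$, noting that intersections are preserved. Your explicit treatment of the case $I\not\subseteq[a,b-1]$ is a small extra check the paper leaves implicit, but it does not change the argument.
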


\begin{proof}
  Let \( [a,b]=\{i:\lambda_i>\mu_i\} \). Then by \Cref{thm: explicit
    ribbon expansion in full generality}, the immanant is equal
  to
\begin{equation}\label{eq:1}
\sum_{\nu_1 \vdash m_1, \, \ldots, \, \nu_k \vdash m_k}
\left(
\sum_{
\substack{
I_i \subseteq [a,b-1] \\
I_1 \cap \cdots \cap I_k = I
}} \,
\prod_{i=1}^k
f^{\nu_i} \left( \NDes(R) \cup d(I_i, R) \right)
\right)
s_{\nu_1}(\vec{x}^{(1)})
\cdots 
s_{\nu_k}(\vec{x}^{(k)}).
\end{equation}

The map \(c \mapsto \sum_{j=1}^c (\lambda_j-\mu_j)\)
is a bijection from \([a,b-1]\) to \(\Des(R)\).
Thus, as \(I_i\) ranges over all possible subsets of \([a, b-1]\),
the set \(d(I_i,R)\) can be any subset of \(\Des(R)\).
This means that \(\NDes(R) \cup d(I_i,R)\)
can be any subset of \([m-1]\) containing \(\NDes(R)\).
In addition, if
\(I_1 \cap \cdots \cap I_k = I\),
then by the definition of \( d(I,R) \) in \eqref{eq:d(I,R)},
we have
\(d(I_1,R) \cap \cdots \cap d(I_k,R) = d(I,R) \).
It follows that
\[
\bigcap_{i=1}^k \left( \NDes(R) \cup d(I_i, R) \right) 
= \NDes(R) \cup \left( \bigcap_{i=1}^k d(I_i, R)\right) 
= \NDes(R) \cup d(I,R).
\]
Thus, setting \(A_i = \NDes(R) \cup d(I_i,R)\), 
we can rewrite \eqref{eq:1} as the right-hand side
of the formula in the corollary.
\end{proof}

We can now obtain \Cref{thm: Schur expansion of JT_R*JT_R} from the
introduction as a consequence of \Cref{cor:ribbon power tableau
  expansion}. In fact, we can make a more general statement involving
\(k\) ribbons, rather than just two. Let \(\mathcal{R}_m\) denote the
set of ribbons \(R = \lambda/\mu\) of size \(m\) that satisfy
\(\{i:\lambda_i > \mu_i\} = [\ell(\lambda)]\). This condition ensures
that \(\mathcal{R}_m\) does not contain vertical or horizontal
translations of the same Young diagram. The map \(R \mapsto \Des(R)\)
defines a bijection between \(\mathcal{R}_m\) and the set
of all subsets of \([m-1]\). In
other words, up to translation, ribbons are uniquely determined by
their descent sets. We denote \( R' = \lambda'/\mu' \), the conjugate
of \( R=\lambda/\mu \).

\begin{cor}\label{cor:det power expansion}
For any ribbon $R$, we have
\begin{align*}
&\det\left(
\tJT_R(\vec{x}^{(1)})
* \cdots *
\tJT_R(\vec{x}^{(k)})\right) \\
&=
\sum_{\nu_1,\ldots, \nu_k \vdash m}
\left(
\sum_{\substack{ A_1, \ldots, A_k \subseteq [m-1] \\
A_1 \cap \cdots \cap A_k = \Des(R')}}
f^{\nu_1} \left(A_1\right)
\cdots
f^{\nu_k} \left(A_k\right)
\right)
s_{\nu_1}(\vec{x}^{(1)})
\cdots 
s_{\nu_k}(\vec{x}^{(k)}).
\end{align*}
\end{cor}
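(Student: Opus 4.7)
The strategy is to deduce the formula from \Cref{cor:ribbon power tableau expansion} by specializing to $I = \emptyset$. Since $\tau(\emptyset) = 1 \in \TL_n(2)$ and the coefficient of $1$ in $\theta(w) = \prod(t_{i_j}-1)$ is $\sgn(w)$, the immanant $\imm_{\tau(\emptyset)}$ is the determinant. With $I = \emptyset$, we have $d(\emptyset, R) = \emptyset$, so \Cref{cor:ribbon power tableau expansion} gives
\[
  \det\bigl(\tJT_R(\vec{x}^{(1)}) * \cdots * \tJT_R(\vec{x}^{(k)})\bigr) = \sum_{\nu_1,\ldots,\nu_k \vdash m} \biggl(\sum_{\substack{A_1,\ldots,A_k \subseteq [m-1] \\ A_1 \cap \cdots \cap A_k = \NDes(R)}} \prod_{i=1}^k f^{\nu_i}(A_i)\biggr) s_{\nu_1}(\vec{x}^{(1)}) \cdots s_{\nu_k}(\vec{x}^{(k)}).
\]
So to prove the corollary, it suffices to show that replacing $\NDes(R)$ by $\Des(R')$ in the indexing condition on the $A_i$ does not change the inner sum.

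This follows from two symmetries. First, a direct analysis of the effect of conjugation on the natural ribbon labeling yields $\Des(R') = m - \NDes(R)$, where $m - S := \{m - i : i \in S\}$. Indeed, the top-right cell of $R'$ is the transpose of the bottom-left cell of $R$, so the labeling of $R'$ traverses $R$ in reverse: the cell labeled $i$ in $R'$ equals the cell labeled $m+1-i$ in $R$. A short bookkeeping check converts ``cell $i+1$ directly below cell $i$ in $R'$'' into ``cell $m+1-i$ directly to the right of cell $m-i$ in $R$'', i.e.\ $m - i \in \NDes(R)$. Second, Sch\"utzenberger's evacuation involution on $\SYT(\nu)$ for $\nu \vdash m$ is shape-preserving and satisfies $\Des(\text{evac}(T)) = m - \Des(T)$, hence $f^\nu(S) = f^\nu(m - S)$ for every $S \subseteq [m-1]$.

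Combining these facts, the substitution $(A_1,\ldots,A_k) \mapsto (m - A_1,\ldots,m - A_k)$ is a bijection between $k$-tuples with $\bigcap_i A_i = \NDes(R)$ and those with $\bigcap_i (m - A_i) = m - \NDes(R) = \Des(R')$, and the product $\prod_i f^{\nu_i}(A_i)$ is invariant under this substitution (using $f^{\nu_i}(A_i) = f^{\nu_i}(m-A_i)$ term-by-term). The two indexed sums are therefore equal, which finishes the proof. The main substantive input is the evacuation symmetry $f^\nu(S) = f^\nu(m - S)$; once that is granted, the rest is routine reindexing.
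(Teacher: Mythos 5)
Your proof is correct, and it takes a genuinely different (though closely parallel) route from the paper's. Both arguments rest on \Cref{cor:ribbon power tableau expansion} with \(I=\emptyset\), but the paper applies its symmetry \emph{before} invoking that corollary: it passes to the ribbon \(\overline{R}\) with \(\Des(\overline{R})=\NDes(R)\), uses the rotation identity \(s_{R'}=s_{\overline{R}}\) (via \cite[Exercise 7.56.(a)]{EC2}) and the homomorphism \(\Phi\) to rewrite the determinant as the one for \((\overline{R})'\), and then applies the corollary to that ribbon, where \(\NDes((\overline{R})')=\Des(R')\) falls out directly. You instead apply the corollary to \(R\) itself and re-index the resulting tableau sum \emph{afterwards}, using the reversal bijection \(A_i\mapsto m-A_i\), the identity \(\Des(R')=\{m-i: i\in\NDes(R)\}\), and the evacuation symmetry \(f^{\nu}(S)=f^{\nu}(m-S)\). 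Your route avoids introducing \(\overline{R}\), \(\Phi\), and the rotation identity, at the cost of invoking the (classical, standard) Sch\"utzenberger evacuation descent-reversal; the two external facts are really two faces of the same symmetry, so the trade-off is a wash. Two small remarks: your ``bookkeeping check'' has the horizontal direction reversed --- the correct transposed statement is that the cell labeled \(m-i\) lies directly to the \emph{right} of the cell labeled \(m+1-i\) in \(R\) (equivalently, passing from label \(m-i\) to \(m-i+1\) one moves left), which is exactly the condition \(m-i\in\NDes(R)\), so the identity \(\Des(R')=m-\NDes(R)\) you use is correct; and note that your argument needs no reduction to \(\mathcal{R}_m\), since \Cref{cor:ribbon power tableau expansion} and the descent sets are translation-invariant, which makes your proof marginally more self-contained than the paper's ``without loss of generality'' step.
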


\begin{proof}
Without loss of generality, we assume \(R \in \mathcal{R}_m\).
Let \(\overline{R}\) be the unique 
ribbon in \(\mathcal{R}_m\) that satisfies
\[\Des(\overline{R}) = [m-1] \setminus \Des(R) = \NDes(R).\]
Note that \(\overline{R}\) is a \(180^\circ\) rotation of \(R'\).
Thus, by \cite[Exercise 7.56.(a)]{EC2},
we have \(s_{\overline{R}}(\vec x) = s_{R'}(\vec x)\).
Generalizing an idea of Stanley \cite{Stanley2024}, we consider the homomorphism 
\(\Phi: \Lambda \to \Lambda(\vec x^{(1)}) \otimes \cdots \otimes \Lambda(\vec x^{(k)})\)
defined by \(\Phi(e_n)=e_n(\vec x^{(1)}) \cdots e_n(\vec x^{(k)})\).
By the Jacobi--Trudi identity, we have
\[\det\left(
\tJT_R(\vec{x}^{(1)})
* \cdots *
\tJT_R(\vec{x}^{(k)})\right) = \Phi(s_{R'}). \]
Since \(s_{R'}(\vec x) = s_{\overline{R}}(\vec x)\), we also have
\[\Phi(s_{R'}) = \Phi(s_{\overline{R}}) = \det\left(
\tJT_{(\overline{R})'}(\vec{x}^{(1)})
* \cdots *
\tJT_{(\overline{R})'}(\vec{x}^{(k)})\right).\]
Recall that \(\det = \imm_{\T{\emptyset}}\). 
By \Cref{cor:ribbon power tableau expansion} with \( I=\emptyset \), we have
\begin{align*}
&\det\left(
\tJT_{(\overline{R})'}(\vec{x}^{(1)})
* \cdots *
\tJT_{(\overline{R})'}(\vec{x}^{(k)})\right) \nonumber \\
&=
\sum_{\nu_1,\ldots, \nu_k \vdash m}
\left(
\sum_{\substack{ A_1, \ldots, A_k \subseteq [m-1] \\
A_1 \cap \cdots \cap A_k = \NDes({(\overline{R})'})}}
f^{\nu_1} \left(A_1\right)
\cdots
f^{\nu_k} \left(A_k\right)
\right)
s_{\nu_1}(\vec{x}^{(1)})
\cdots 
s_{\nu_k}(\vec{x}^{(k)}).
\end{align*}
To conclude, it suffices to observe that
\((\overline{R})' = \overline{R'}\), which implies
\[ \NDes({(\overline{R})'}) = \NDes(\overline{R'}) = \Des(R'). \qedhere \]
\end{proof}

Since \(R \mapsto \Des(R)\) is a bijection between \(\mathcal{R}_m\)
and the set of all subsets of \([m-1]\), summing the equation in
\Cref{cor:det power expansion} over all \( R\in \mathcal{R}_m \) gives
the following proposition.

\begin{prop}\label{cor:all1s-coefficient}
We have
  \[
  \sum_{R \in \mathcal{R}_m} 
  \det\left(
  \tJT_R(\vec{x}^{(1)})
  * \cdots *
  \tJT_R(\vec{x}^{(k)})
  \right)
  = \left(\sum_{\lambda \vdash m} f^{\lambda}s_\lambda(\vec{x}^{(1)}) \right)
  \cdots
  \left(\sum_{\lambda \vdash m} f^{\lambda}s_\lambda(\vec{x}^{(k)}) \right).
  \]
In particular, the coefficient of 
$(x^{(1)}_1 \cdots x^{(1)}_m)
\cdots 
(x^{(k)}_1 \cdots x^{(k)}_m)$
on either side is $(m!)^k$.
\end{prop}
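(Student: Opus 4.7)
The plan is to derive the identity by summing \Cref{cor:det power expansion} over all $R \in \mathcal{R}_m$, and then prove the coefficient statement via a short SSYT-counting argument.

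First I would use the bijection $R \mapsto \Des(R)$ from $\mathcal{R}_m$ to $2^{[m-1]}$. Since conjugating a ribbon swaps horizontal and vertical steps between consecutive cells, $\Des(R') = [m-1] \setminus \Des(R)$, so the map $R \mapsto \Des(R')$ is also a bijection from $\mathcal{R}_m$ onto $2^{[m-1]}$. Applying \Cref{cor:det power expansion} and summing over $R \in \mathcal{R}_m$ therefore gives
\[
\sum_{R \in \mathcal{R}_m} \det\!\left(\tJT_R(\vec{x}^{(1)}) * \cdots * \tJT_R(\vec{x}^{(k)})\right)
= \sum_{\nu_1, \ldots, \nu_k \vdash m} \left( \sum_{B \subseteq [m-1]} \sum_{\substack{A_1, \ldots, A_k \subseteq [m-1] \\ A_1 \cap \cdots \cap A_k = B}} \prod_{i=1}^k f^{\nu_i}(A_i) \right) \prod_{i=1}^k s_{\nu_i}(\vec{x}^{(i)}).
\]
Since every tuple $(A_1, \ldots, A_k)$ has a unique intersection $B$, the double sum over $B$ and over tuples with intersection $B$ collapses to an unconstrained sum over all tuples, which factors as $\prod_i \sum_{A \subseteq [m-1]} f^{\nu_i}(A) = \prod_i f^{\nu_i}$ because the descent map partitions $\SYT(\nu_i)$ by descent set. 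This yields $\sum_{\nu_1, \ldots, \nu_k} \prod_i f^{\nu_i} s_{\nu_i}(\vec{x}^{(i)})$, which factors as the claimed product of $k$ sums.

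For the final coefficient claim, I would note that the coefficient of $x_1 \cdots x_m$ in $s_\nu(\vec{x})$ counts SSYT of shape $\nu$ with content $(1, 1, \ldots, 1)$, namely $f^\nu$. Therefore the coefficient of $x^{(i)}_1 \cdots x^{(i)}_m$ in $\sum_{\nu \vdash m} f^\nu s_\nu(\vec{x}^{(i)})$ is $\sum_{\nu \vdash m} (f^\nu)^2 = m!$ by the RSK correspondence (equivalently, $|\mathfrak{S}_m| = m!$), and multiplying across the $k$ factors gives $(m!)^k$.

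No step looks like a genuine obstacle: the argument is a routine bookkeeping exercise once \Cref{cor:det power expansion} is in hand. The only point requiring any care is the identification $\Des(R') = [m-1] \setminus \Des(R)$ ensuring that the constraint $A_1 \cap \cdots \cap A_k = \Des(R')$ is freed when we sum over $R \in \mathcal{R}_m$.
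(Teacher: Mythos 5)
Your argument is exactly the paper's: sum \Cref{cor:det power expansion} over $R\in\mathcal{R}_m$, observe that the constraint $A_1\cap\cdots\cap A_k=\Des(R')$ is thereby freed, collapse the sum using $\sum_{A\subseteq[m-1]}f^{\nu}(A)=f^{\nu}$, and get the coefficient statement from the fact that the squarefree coefficient of $s_\nu$ is $f^\nu$ together with $\sum_{\nu\vdash m}(f^{\nu})^{2}=m!$. All of these steps are fine and coincide with the paper's one-line derivation.

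However, the statement you flag as ``the only point requiring any care'' is false as written: conjugation of a ribbon does not complement the descent set. For the hook $R=(2,1)$ (so $m=3$) one has $\Des(R)=\{2\}$ and $R'=R$, hence $\Des(R')=\{2\}\neq\{1\}=[m-1]\setminus\Des(R)$. The correct relation is $\Des(R')=[m-1]\setminus\{m-i : i\in\Des(R)\}$, the complement of the \emph{reversed} set; it is the $180^{\circ}$ rotation $\overline{R}$ of $R'$, not $R'$ itself, whose descent set equals $[m-1]\setminus\Des(R)$, which is how the paper phrases it in the proof of \Cref{cor:det power expansion}. This slip does not sink your proof, because the only fact you actually use is that $R\mapsto\Des(R')$ is a bijection from $\mathcal{R}_m$ onto the subsets of $[m-1]$, and that still holds: it is the composition of the bijection $R\mapsto\Des(R)$ with the involution $D\mapsto[m-1]\setminus\{m-i: i\in D\}$ of the power set of $[m-1]$. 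With that one correction, your derivation of both the identity and the coefficient $(m!)^{k}$ is complete and is essentially the same as the paper's.
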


Suppose \(R = \lambda/\mu \in \mathcal{R}_m\) satisfies
\(\ell(\lambda) = n\). For \(I \subseteq [n-1]\), define \(R_I\) to be
the unique ribbon in $\mathcal{R}_m$ satisfying
$\Des\left(R_I\right) = d([n-1] \setminus I,R)$.

\begin{thm}\label{thm:TL of hadamard power equals det}
For any \(R = \lambda/\mu \in \mathcal{R}_m\) with \(\ell(\lambda) \le n\) and any \(I \subseteq [n-1]\), we have
\[\imm_{\T{I}} \left( \tJT_{R}(\vec{x}^{(1)}) *\cdots * \tJT_{R}(\vec{x}^{(k)}) \right) = 
\det \left( \tJT_{R_I} (\vec{x}^{(1)}) *\cdots * \tJT_{R_I}(\vec{x}^{(k)}) \right).\]
\end{thm}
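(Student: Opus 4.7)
The plan is to reduce the theorem to an identity about descent sets of ribbons by comparing the Schur expansions of the two sides. On the left-hand side, \Cref{cor:ribbon power tableau expansion} gives a Schur expansion whose coefficient of $s_{\nu_1}(\vec{x}^{(1)}) \cdots s_{\nu_k}(\vec{x}^{(k)})$ equals the number of tuples $(A_1,\dots,A_k)$ of subsets of $[m-1]$ with $A_1 \cap \cdots \cap A_k = \NDes(R) \cup d(I,R)$, weighted by $f^{\nu_1}(A_1)\cdots f^{\nu_k}(A_k)$. On the right-hand side, since $\det = \imm_{\T{\emptyset}}$, I would apply \Cref{cor:det power expansion} to $R_I$ to obtain the same form of expansion but with the intersection condition $A_1 \cap \cdots \cap A_k = \Des(R_I')$. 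Thus it suffices to prove the set-theoretic identity
\[\Des(R_I') = \NDes(R) \cup d(I,R).\]

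To establish this identity, I would first recall the standard fact that for any ribbon $S$ of size $m$, one has $\Des(S') = [m-1] \setminus \Des(S)$, since conjugation interchanges vertical and horizontal adjacency between consecutively labeled cells, turning descents into non-descents and vice versa. Applied to $R_I$, this gives $\Des(R_I') = [m-1] \setminus \Des(R_I)$. By definition, $\Des(R_I) = d([n-1] \setminus I, R)$. Since the map $i \mapsto \sum_{j=1}^{i}(\lambda_j - \mu_j)$ is an injection from $[n-1]$ onto $\Des(R)$ (using that $R \in \mathcal{R}_m$, so $\{i : \lambda_i > \mu_i\} = [\ell(\lambda)]$), we have the disjoint decomposition $\Des(R) = d(I,R) \sqcup d([n-1] \setminus I, R)$. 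Therefore
\[\Des(R_I) = \Des(R) \setminus d(I,R),\]
which yields
\[\Des(R_I') = [m-1] \setminus \bigl(\Des(R) \setminus d(I,R)\bigr) = \bigl([m-1] \setminus \Des(R)\bigr) \cup d(I,R) = \NDes(R) \cup d(I,R),\]
exactly as required.

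The proof is largely bookkeeping, so I do not expect any real obstacle beyond carefully matching the two Schur expansions via the disjoint decomposition of $\Des(R)$ induced by $I$; the only subtlety is ensuring that $R_I$, as defined, indeed has size $m$ (so that the Schur expansions are both indexed by partitions of $m$), which follows because $R$ and $R_I$ both lie in $\mathcal{R}_m$.
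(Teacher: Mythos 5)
Your reduction to a set identity has the right shape, but the identity you reduce to is false, because the ``standard fact'' \(\Des(S') = [m-1]\setminus\Des(S)\) does not hold with the paper's conventions. Conjugation reverses the order of the cell labels (the top-right cell of \(S\) becomes the bottom-left cell of \(S'\)) as well as exchanging vertical and horizontal adjacencies, so the correct relation is \(\Des(S') = \{m-i : i \in [m-1]\setminus\Des(S)\}\), the \emph{reverse} complement. For instance, \(S=(2,1)\) is self-conjugate with \(\Des(S)=\Des(S')=\{2\}\), whereas \([2]\setminus\Des(S)=\{1\}\). The paper is explicit about this distinction: the ribbon whose descent set is \(\NDes(S)\) is \(\overline{S}\), the \(180^\circ\) rotation of \(S'\), not \(S'\) itself (see the proof of \Cref{cor:det power expansion}); if your fact were true, that detour through \(\overline{R}\) would be unnecessary. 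Consequently \(\Des(R_I')\) is the reversal of \(\NDes(R_I)=\NDes(R)\cup d(I,R)\), not that set itself, so the two Schur expansions you compare are governed by genuinely different index sets and your matching step fails as written.

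The argument can be repaired in two ways. The cleaner one is the paper's: expand the right-hand side not via \Cref{cor:det power expansion} but as \(\imm_{\T{\emptyset}}\) of the Hadamard product of the \(\tJT_{R_I}\)'s, i.e., via \eqref{eq: before plugging in ssyt} (equivalently \Cref{cor:ribbon power tableau expansion} with \(I=\emptyset\)), whose condition is \(A_1\cap\cdots\cap A_k=\NDes(R_I)\); then your own computation \(\Des(R_I)=\Des(R)\setminus d(I,R)\), hence \(\NDes(R_I)=\NDes(R)\cup d(I,R)\), is exactly the identity the paper proves and it finishes the argument. Alternatively, you may keep \Cref{cor:det power expansion}, but you must then add the symmetry \(f^{\nu}(A)=f^{\nu}(\{m-a: a\in A\})\) (via the Sch\"utzenberger evacuation involution), which shows that the coefficient \(\sum_{A_1\cap\cdots\cap A_k=D}\prod_i f^{\nu_i}(A_i)\) is unchanged when \(D\) is replaced by its reversal; this extra ingredient is what your write-up silently assumes. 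A smaller caveat: the map \(i\mapsto\sum_{j\le i}(\lambda_j-\mu_j)\) is injective only on \([\ell(\lambda)-1]\) (it is constantly \(m\) for \(i\ge\ell(\lambda)\)), so the decomposition \(\Des(R)=d(I,R)\sqcup d([n-1]\setminus I,R)\) should be stated after intersecting with \([m-1]\), as in the paper's computation of \(\NDes(R_I)\).
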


\begin{proof}
  We expand each side of the equation in the theorem via \eqref{eq:
    before plugging in ssyt} from the proof of \Cref{thm: explicit
    ribbon expansion in full generality}. As in the proof of
  \Cref{cor:ribbon power tableau expansion}, we make use here of the
  fact that the condition \(I_1 \cap \cdots \cap I_k = I\) is
  equivalent to
  \(\Des(u^{(1)}) \cap \cdots \cap \Des(u^{(k)}) = \NDes(R) \cup d(I,R)\).
  Thus the left hand side of the equation becomes
    \begin{align*}
    \imm_{\T{I}} \left( \tJT_{R}( \vec{x}^{(1)} ) *\cdots * \tJT_{R}( \vec{x}^{(k)} ) \right)
    &= \sum_{\substack{u^{(1)}, \ldots, u^{(k)} \in \ZZ^m_{\geq 1} \\ \Des(u^{(1)}) \cap \cdots \cap \Des(u^{(k)}) = \NDes(R) \cup d(I,R) } } 
    (\vec{x}^{(1)})_{u^{(1)}} \cdots (\vec{x}^{(k)})_{u^{(k)}}.
    \end{align*}
    Since \(\det = \imm_{\T{\emptyset}}\), we expand the right hand
    side of the equation similarly:
    \begin{align*}
    \det \left( \tJT_{R_I}(\vec{x}^{(1)}) *\cdots * \tJT_{R_I}(\vec{x}^{(k)})\right)
    &= \imm_{\T{\emptyset}} \left( \tJT_{R_I}(\vec{x}^{(1)}) *\cdots * \tJT_{R_I}(\vec{x}^{(k)})\right) \\
    &= \sum_{\substack{u^{(1)}, \ldots, u^{(k)} \in \ZZ^m_{\geq 1} \\ \Des(u^{(1)}) \cap \cdots \cap \Des(u^{(k)}) = \NDes(R_I) }} (\vec{x}^{(1)})_{u^{(1)}} \cdots (\vec{x}^{(k)})_{u^{(k)}}.
    \end{align*}
To finish the proof, it suffices to observe that 
\begin{align*}
\NDes(R_I) &= [m-1] \setminus d([n-1] \setminus I, R) \\
&= [m-1] \setminus \left\{\sum_{j=1}^i (\lambda_j - \mu_j) : i \in [n-1] \setminus I \right\} \\
&= \left\{a \in [m-1] : a \neq  \sum_{j=1}^i (\lambda_j - \mu_j) \mbox{ for all \( i \)} \right\} \cup \left\{\sum_{j=1}^i (\lambda_j - \mu_j) : i \in I \right\} \\
&= \NDes(R) \cup d(I,R). \qedhere
\end{align*}
\end{proof}

\begin{cor}\label{cor: single imm is det}
  For any ribbon \(R = \lambda/\mu\) with \(\ell(\lambda) \le n\) and any \(I \subseteq [n-1]\), we have
    \[ \imm_{\T{I}} \left( \tJT_{R} ({\vec{x}}) \right) = \det \left(\tJT_{R_I}({\vec{x}})\right) = s_{\left(R_I\right)'}(\vec{x}). \]
\end{cor}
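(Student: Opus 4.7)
The plan is to derive this corollary as an immediate specialization of \Cref{thm:TL of hadamard power equals det}, which was just proved in full generality for $k$ variable alphabets. First I would set $k=1$ in that theorem, which directly yields the first equality
\[
  \imm_{\T{I}}\bigl(\tJT_{R}(\vec{x})\bigr) \;=\; \det\bigl(\tJT_{R_{I}}(\vec{x})\bigr).
\]
No further argument is required for this step: the hypothesis $\ell(\lambda)\le n$ and $I\subseteq [n-1]$ is inherited verbatim, and the definition of $R_I \in \mathcal{R}_m$ is unchanged.

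For the second equality, I would invoke the (dual) Jacobi--Trudi identity as recorded immediately after \eqref{eq: JT identity} in the preliminaries: for any skew shape $\nu/\pi$,
\[
  \det\bigl(\tJT_{\nu/\pi}(\vec{x})\bigr) \;=\; s_{\nu'/\pi'}(\vec{x}).
\]
Applying this with $\nu/\pi = R_I$ (which is itself a skew shape, being a ribbon in $\mathcal{R}_m$) gives
\[
  \det\bigl(\tJT_{R_I}(\vec{x})\bigr) \;=\; s_{(R_I)'}(\vec{x}),
\]
completing the chain of equalities.

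There is essentially no obstacle: the content of the corollary is contained in \Cref{thm:TL of hadamard power equals det}, and the step from a determinant of a dual Jacobi--Trudi matrix to a (skew) Schur function is the standard identity already used throughout the paper. The only point requiring any care is to note that $R_I$ is well defined in $\mathcal{R}_m$ whenever $R\in \mathcal{R}_m$ and $I \subseteq [n-1]$, so that the conjugate $(R_I)'$ makes sense; this was already verified in the paragraphs preceding \Cref{thm:TL of hadamard power equals det}.
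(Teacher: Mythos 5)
Your proof is correct and follows exactly the same route as the paper: set $k=1$ in \Cref{thm:TL of hadamard power equals det} to obtain the first equality, then apply the dual Jacobi--Trudi identity $\det \tJT_{\nu/\pi}(\vec x) = s_{\nu'/\pi'}(\vec x)$ for the second. The paper's own proof is simply the one-line remark ``This is the case $k=1$ of Theorem~\ref{thm:TL of hadamard power equals det}''; you have spelled out the same content slightly more fully.
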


\begin{proof}
This is the case $k=1$ of Theorem \ref{thm:TL of hadamard power equals det}.
\end{proof}

Let \((1^m)=(m)'\) denote the partition with \(m\) copies of \(1\).

\begin{cor}
  Every skew Schur function \( s_R(\vec x) \) for a ribbon
  \(R \in \mathcal{R}_m\) is equal to some Temperley--Lieb immanant of
  \(\tJT_{(1^m)}(\vec{x})\). Conversely, any nonzero
  \( \imm_\tau(\tJT_{(1^m)}(\vec{x})) \) is equal to \( s_R(\vec x) \)
  for some \( R \in \mathcal{R}_m \).
\end{cor}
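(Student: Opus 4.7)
The plan is to apply Corollary \ref{cor: single imm is det} with $R=(1^m)$ and keep careful track of which element of $\mathcal{R}_m$ appears as the target ribbon. First, since $R=(1^m)$ satisfies $\lambda_j-\mu_j=1$ for every $j$, the map $J\mapsto d(J,R)$ from \eqref{eq:d(I,R)} reduces to the identity on subsets of $[m-1]$. Unpacking the definition of $R_I$ then gives $\Des(R_I)=[m-1]\setminus I$ for each $I\subseteq[m-1]$.

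Next, Corollary \ref{cor: single imm is det} yields
\[
\imm_{\T{I}}\bigl(\tJT_{(1^m)}(\vec x)\bigr)=s_{(R_I)'}(\vec x).
\]
To recognize the right-hand side as the Schur function of a standard element of $\mathcal{R}_m$, I would invoke the identity $s_{R'}(\vec x)=s_{\overline R}(\vec x)$ recorded in the proof of Corollary \ref{cor:det power expansion}, where $\overline R\in\mathcal{R}_m$ is the unique ribbon with $\Des(\overline R)=\NDes(R)$. Applied to $R_I$, this gives $s_{(R_I)'}(\vec x)=s_{S_I}(\vec x)$ with $S_I\in\mathcal{R}_m$ determined by $\Des(S_I)=\NDes(R_I)=I$.

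The assignment $I\mapsto S_I$ is the composition of complementation on $[m-1]$ with the bijection $\mathcal{R}_m\leftrightarrow 2^{[m-1]}$ noted in the excerpt just before Corollary \ref{cor:det power expansion}, so it is itself a bijection. Therefore every $s_R(\vec x)$ with $R\in\mathcal{R}_m$ is realized as $\imm_{\T{I}}\bigl(\tJT_{(1^m)}(\vec x)\bigr)$ for some $I$, giving the forward direction. For the converse, Theorem \ref{thm: TL multinetwork expansion} with $k=1$ forces any nonzero $\imm_\tau\bigl(\tJT_{(1^m)}(\vec x)\bigr)$ to be of the form $\imm_{\T{I}}\bigl(\tJT_{(1^m)}(\vec x)\bigr)$ for some $I\subseteq[m-1]$, which the previous step identifies with $s_{S_I}(\vec x)\in\{s_R(\vec x):R\in\mathcal{R}_m\}$. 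I do not anticipate a substantive obstacle; the only nontrivial piece is verifying $d(J,(1^m))=J$ and the descent-set bookkeeping that relates $R_I$, its conjugate, and $S_I$.
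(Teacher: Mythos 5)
Your proposal is correct and takes essentially the same route as the paper: compute $d(I,(1^m))=I$, apply Corollary~\ref{cor: single imm is det}, identify $s_{(R_I)'}$ with the Schur function of the ribbon with descent set $I$ via the $180^\circ$-rotation identity, and invoke Theorem~\ref{thm: TL multinetwork expansion} for the converse. (One trivial slip: since $\Des(S_I)=I$, the map $I\mapsto S_I$ is the plain bijection $2^{[m-1]}\leftrightarrow\mathcal{R}_m$, not its composition with complementation; this has no effect on the conclusion.)
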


\begin{proof}
Observe that \(d(I,(1^m)) = I\) for any \(I \subseteq [m-1]\). 
Thus \((1^m)_I\) is the unique ribbon in 
\(\mathcal{R}_m\) with descent set \([m-1] \setminus I\).
In particular, setting \(I = \Des(R)\),
we have \((1^m)_{\Des(R)}= \overline{R}\).

Applying \Cref{cor: single imm is det}, we have
\[\imm_{\tau(\Des(R))} \left( \tJT_{(1^m)} ({\vec{x}}) \right)
= s_{((1^m)_{\Des(R)})'}(\vec{x}) 
= s_{(\overline{R})'}(\vec x)
= s_{R}(\vec x).\]
As in the proof of \Cref{cor:det power expansion}, the final equality above follows from \cite[Exercise 7.56.(a)]{EC2}
and the observation that 
\(\overline{R}\) is a \(180^\circ\) rotation of \(R'\).

The second statement follows from \Cref{thm: TL multinetwork expansion}
and \Cref{cor: single imm is det}.
\end{proof}

\subsection{Steps to the full conjecture}\label{sec: limitations}

In view of \Cref{conj: stronger sokal's conjecture}, in this subsection we discuss the extent to which
the proof of monomial-positivity in \Cref{thm: TL multinetwork expansion} can be generalized.
While we can generalize \Cref{thm: TL multinetwork expansion} slightly,
we show that $3 \times 2$ blocks will essentially always generate an obstruction to the proof method.

Call a collection $\lambda^{(1)}/\mu^{(1)}, \ldots, \lambda^{(k)}/\mu^{(k)}$ of skew shapes
\textit{essentially $(3 \times 2)$-avoiding} if 
whenever some $\lambda^{(t)}/\mu^{(t)}$
contains a $3 \times 2$ block spanning rows $a$, $a+1$, and $a+2$,
then there exists $1 \leq s \leq k$ and $b \in \{a,a+1,a+2\}$ such that
$\lambda^{(s)}_b = \mu^{(s)}_b $.

\begin{prop}\label{cor:essential-implies-s-pos}
If $\lambda^{(1)}/\mu^{(1)}, \ldots, \lambda^{(k)}/\mu^{(k)}$
is an essentially $(3 \times 2)$-avoiding collection of skew shapes
with \( \ell(\lambda^{(i)})\le n \) for all \( i \),
then
\(\imm_{\tau} ( \tJT_{\lambda^{(1)} /
\mu^{(1)}}(\vec{x}^{(1)}) * \cdots * \tJT_{\lambda^{(k)} /
\mu^{(k)}}(\vec{x}^{(k)}) )\)
is $s$-positive for any $\tau \in \K_n$.
\end{prop}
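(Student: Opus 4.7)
The plan is to induct on the total cell count $N = \sum_{i=1}^k |\lambda^{(i)}/\mu^{(i)}|$. If the collection $\lambda^{(1)}/\mu^{(1)}, \ldots, \lambda^{(k)}/\mu^{(k)}$ is already $(3 \times 2)$-avoiding (which includes the base case $N=0$), the desired $s$-positivity is precisely \Cref{cor:all TL imms s positive}. Otherwise, some shape $\lambda^{(t)}/\mu^{(t)}$ contains a $3 \times 2$ block at rows $a, a+1, a+2$, and the essentially $(3 \times 2)$-avoiding hypothesis supplies a shape $s$ together with a row $b \in \{a, a+1, a+2\}$ such that $\lambda^{(s)}_b = \mu^{(s)}_b$. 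The strategy is to use this empty $b$-th row of shape $s$ to factor the immanant into a product of immanants of strictly smaller, essentially $(3 \times 2)$-avoiding sub-collections.

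The key geometric input is that since $\lambda^{(s)}_b = \mu^{(s)}_b$, the source $A(\lambda^{(s)})_b$ and sink $B(\mu^{(s)})_b$ lie on the same vertical line $x = c := \mu^{(s)}_b + n - b$; moreover, since $\lambda^{(s)}$ and $\mu^{(s)}$ are partitions, for $i < b$ both $A(\lambda^{(s)})_i$ and $B(\mu^{(s)})_i$ lie strictly east of this line, while for $i > b$ they both lie strictly west. Because south-southwest lattice paths cannot move east, no path in a $(\lambda^{(s)}, \mu^{(s)})$-subnetwork $H^{(s)}$ can cross this line; consequently, in every path family covering $H^{(s)}$, the path $p_b^{(s)}$ is purely vertical, the paths indexed by $[1, b-1]$ remain strictly east, and the paths indexed by $[b+1, n]$ remain strictly west. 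I conclude that every permutation type appearing in $\beta(H^{(s)})$---and hence in $\beta(\vec H)$ for any $(\underline{\lambda}, \underline{\mu})$-multinetwork $\vec H$---lies in the subgroup $\mathfrak{S}_{b-1} \times \mathfrak{S}_{n-b} \subseteq \mathfrak{S}_n$ fixing $b$ and preserving $[1, b-1]$ and $[b+1, n]$.

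Algebraically, this means $\theta(\beta(\vec H))$ lies in the subalgebra of $\TL_n(2)$ generated by $\{t_i : i \ne b-1, b\}$, which is naturally isomorphic to $\TL_{b-1}(2) \otimes \TL_{n-b}(2)$. The Kauffman diagrams $\tau \in \K_n$ appearing in this subalgebra are exactly those whose $b$-th dot on each side is horizontally connected to its opposite, and each such $\tau$ factors uniquely as $\tau = \tau' \otimes \tau''$ with $\tau' \in \K_{b-1}$ and $\tau'' \in \K_{n-b}$. For any other $\tau$, $f_\tau$ vanishes on the subalgebra, so the immanant is zero and there is nothing to prove. For $\tau = \tau' \otimes \tau''$, the multiplicativity $f_{\tau' \otimes \tau''}(\sigma' \sigma'') = f_{\tau'}(\sigma') \, f_{\tau''}(\sigma'')$, combined with the splitting of each multinetwork $\vec H$ into upper, middle, and lower components, will yield the factorization
\[
\imm_\tau\bigl(\tJT_{\lambda^{(1)}/\mu^{(1)}}(\vec x^{(1)}) * \cdots * \tJT_{\lambda^{(k)}/\mu^{(k)}}(\vec x^{(k)})\bigr) = \imm_{\tau'}(M^{\mathrm{up}}) \cdot M_{b,b} \cdot \imm_{\tau''}(M^{\mathrm{down}}),
\]
where $M^{\mathrm{up}}$ and $M^{\mathrm{down}}$ are Hadamard products of the Jacobi--Trudi matrices for the truncations of the shapes to rows $[1, b-1]$ and $[b+1, n]$ respectively, and the middle factor $M_{b,b} = \prod_i e_{\lambda^{(i)}_b - \mu^{(i)}_b}(\vec x^{(i)})$ is manifestly $s$-positive.

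To close the induction, the truncated sub-collections are again essentially $(3 \times 2)$-avoiding---any $3 \times 2$ block in a truncated shape lies entirely within $[1, b-1]$ or entirely within $[b+1, n]$, forcing its empty-row witness from the original hypothesis into the same range---and the total cell count strictly decreases, because $\lambda^{(t)}_b - \mu^{(t)}_b \ge 2$ for the shape $t$ responsible for the $3 \times 2$ block. By the induction hypothesis each factor is $s$-positive, and a product of $s$-positive multi-symmetric functions is $s$-positive since products of Schur functions are Schur positive. The main obstacle I expect is making the tensor embedding $\TL_{b-1}(2) \otimes \TL_{n-b}(2) \hookrightarrow \TL_n(2)$ and the corresponding tensor factorization of Kauffman diagrams fully precise, including the multiplicativity of $f_\tau$ on this subalgebra; the combinatorial content---that lattice paths cannot cross the vertical line $x=c$---is completely clean.
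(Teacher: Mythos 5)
Your argument is correct and rests on the same key mechanism as the paper's proof: the empty row $b$ of $\lambda^{(s)}/\mu^{(s)}$ forces every permutation contributing to the immanant to fix $b$ and preserve the blocks $[1,b-1]$ and $[b+1,n]$, so only Kauffman diagrams of the form $\tau'\otimes\tau''$ survive, and the immanant factors as $\imm_{\tau'}(M^{\mathrm{up}})\cdot\bigl(\prod_i e_{\lambda^{(i)}_b-\mu^{(i)}_b}(\vec x^{(i)})\bigr)\cdot\imm_{\tau''}(M^{\mathrm{down}})$; this is exactly the paper's identity \eqref{eq:8} with $A=M^{\mathrm{up}}$, $B=M^{\mathrm{down}}$ and $\varepsilon=M_{b,b}$, and your appeal to multiplicativity of $f_{\tau'\otimes\tau''}$ is at the same level of detail as the paper's. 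Where you genuinely diverge is in closing the induction: the paper keeps all matrices $n\times n$, constructs a modified collection $\rho^{(i)}/\nu^{(i)}$ by shifting the first $b-1$ rows and emptying row $b$ in every shape, shows $\imm_\tau(M)=\varepsilon\,\imm_\tau(N)$, and inducts on the number of $3\times 2$ blocks until \Cref{cor:all TL imms s positive} applies; you instead recognize the two diagonal blocks as Hadamard products of Jacobi--Trudi matrices of the truncated collections (rows $[1,b-1]$ and $[b+1,n]$), verify that these truncations inherit essential $(3\times 2)$-avoidance, and induct on total cell count, finishing with closure of Schur positivity under products. Your route avoids the row-shifting construction entirely and is arguably more transparent; the paper's route avoids having to check that the blocks are Jacobi--Trudi matrices of smaller skew shapes and that essential avoidance passes to truncations (both of which you do verify correctly, and the cell count does drop since row $b$ of $\lambda^{(t)}/\mu^{(t)}$ contains at least two cells). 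One small imprecision to fix: the claim that ``no path in a $(\lambda^{(s)},\mu^{(s)})$-subnetwork can cross the line $x=c$'' is too strong, since a south-southwest path may cross it from east to west; what is true (and suffices) is that $x$-coordinates never increase along a path, so a path starting at $A(\lambda^{(s)})_i$ with $i>b$ lies strictly west of the line and must end at some $B(\mu^{(s)})_j$ with $j>b$, while the path from $A(\lambda^{(s)})_b$ must end at some $B(\mu^{(s)})_j$ with $j\ge b$; injectivity then forces the type to fix $b$ and preserve both blocks, which is the same conclusion the paper extracts from the vanishing entries $e_d=0$, $d<0$, in positions $i\ge b$, $j\le b$ of the $s$-th Jacobi--Trudi matrix.
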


\begin{proof}
  By \Cref{thm: TL multinetwork expansion}, we may assume that some
  \(\lambda^{(t)}/\mu^{(t)}\) contains a \(3 \times 2\) block of
  cells. Suppose that \(\lambda^{(t)}/\mu^{(t)}\) contains a
  \(3 \times 2\) block spanning rows \(a\), \(a+1\), and \(a+2\).
  Since this collection is essentially \((3\times 2)\)-avoiding, there
  are integers \(b \in \{a,a+1,a+2\}\) and \( s\in[k] \) such
  that \(\lambda^{(s)}_b = \mu^{(s)}_b \).

  Now we construct another collection of skew shapes
  \(\rho^{(1)}/\nu^{(1)}, \ldots, \rho^{(k)}/\nu^{(k)}\). For each
  \(1 \leq i \leq k\), define the partitions
  \(\rho^{(i)} = (\rho^{(i)}_1,\dots,\rho^{(i)}_n)\) and
  \(\nu^{(i)} = (\nu^{(i)}_1,\dots,\nu^{(i)}_n)\) as follows: for
  \( q\in [n] \),
\[
\rho^{(i)}_q =
\begin{cases}
\lambda^{(i)}_q + \lambda^{(i)}_b  & \text{if } q < b, \\
\lambda^{(i)}_q & \text{otherwise},
\end{cases}
\qquad 
\nu^{(i)}_q =
\begin{cases}
\mu^{(i)}_q + \lambda^{(i)}_b  & \text{if } q < b, \\
\lambda^{(i)}_b  & \text{if } q = b, \\
\mu^{(i)}_q & \text{otherwise}.
\end{cases}
\]
In other words, \( \rho^{(i)} / \nu^{(i)} \) is the skew shape
obtained from \( \lambda^{(i)}/\mu^{(i)} \) by translating the first
\( b-1 \) rows to the right by \( \lambda_b^{(i)} \) units, and
deleting the cells in row \( b \), if any. Note that, by construction,
we have \(\rho^{(i)}_b = \nu^{(i)}_b \) for all \(i\). In particular,
since \(b \in \{a, a+1, a+2\}\), the skew shape
\(\rho^{(t)} / \nu^{(t)}\) does not contain a \(3 \times 2\) block
spanning rows \(a\), \(a+1\), and \(a+2\). Note also that the
collection \(\rho^{(1)}/\nu^{(1)}, \ldots, \rho^{(k)}/\nu^{(k)}\) is
still essentially \((3 \times 2)\)-avoiding, and that this
construction does not create any new \( 3\times 2 \) blocks, thereby
decreasing the number of \( 3\times 2 \) blocks.

For simplicity, set
\[
  M = \tJT_{\lambda^{(1)} / \mu^{(1)}}(\vec{x}^{(1)}) * \cdots *
  \tJT_{\lambda^{(k)} / \mu^{(k)}}(\vec{x}^{(k)})
\]
 and
\[
  N = \tJT_{\rho^{(1)} / \nu^{(1)}}(\vec{x}^{(1)}) * \cdots *
  \tJT_{\rho^{(k)} / \nu^{(k)}}(\vec{x}^{(k)}).
\]
We claim that \(\imm_\tau(M)\) is a Schur-positive multiple of
\(\imm_\tau(N)\) for any \(\tau \in \K_n\). We first show that it
suffices to prove the claim. If some \(\rho^{(i)} / \nu^{(i)}\) still
contains a \(3 \times 2\) block, we repeat the construction from the
previous paragraph. Since the construction decreases the number of
\( 3\times 2 \) blocks, we eventually obtain a collection of
\((3 \times 2)\)-avoiding skew shapes. Then \(\imm_f(M)\) is
Schur-positive by the claim and \Cref{thm: TL multinetwork expansion}.

It remains to prove the claim. The condition
\(\lambda^{(s)}_b = \mu^{(s)}_b \) implies that the \(b\)th diagonal
entry of \(\tJT_{\lambda^{(s)} / \mu^{(s)}}(\vec{x}^{(s)})\) is
\(e_0(\vec x^{(s)}) = 1\). This also implies that
\(\tJT_{\lambda^{(s)} / \mu^{(s)}}(\vec{x}^{(s)})_{i,j} = 0\) for
\(i<b\) or \(j<b\). It follows that there is a block decomposition
\begin{equation}\label{eq:6}
M =
\left[
\begin{NiceArray}{ccc|c|ccc}[margin=2pt]
\Block{3-3}<\Large>{A} & & & * & \Block{3-3}<\Large>{0} & & \\
& & & \vdots & & & \\
& & & * & & & \\
\hline
0 & \cdots & 0 & \varepsilon & * & \cdots & * \\
\hline
\Block{3-3}<\Large>{0} & & & 0 & \Block{3-3}<\Large>{B} & & \\
& & & \vdots & & & \\
& & & 0 & & &
\end{NiceArray}
\right].
\end{equation}
Here \(A\) is a \((b-1) \times (b-1)\) block, 
\(B\) is an \((n-b) \times (n-b)\) block,
and \[\varepsilon = \prod_{j \neq s} e_{\lambda^{(j)}_b - \mu^{(j)}_b}(\vec{x}^{(j)}).\]
Note, in particular, that \(\varepsilon\) is Schur-positive.

Suppose \(\sigma \in \sn\) satisfies
\(M_{1,\sigma(1)} \cdots M_{n,\sigma(n)} \neq 0\).
The block decomposition of \(M\) implies that \(\sigma = \sigma_1\sigma_2\),
where \(\sigma_1\) belongs to the subgroup \(\mathfrak{S}_{[1,b-1]} \subseteq \sn\)
that only permutes the set \([1,b-1]\).
Similarly, \(\sigma_2\) belongs to \(\mathfrak{S}_{[b+1,n]}\),
the subgroup permuting the set \([b+1,n]\).
In particular, \(\sigma\) has a reduced word
using only the generators \(\s_1, \ldots, \s_{b-2}, \s_{b+1}, \ldots, \s_{n-1}\).

As a result, \(\theta(\sigma)\) can be written as a linear of combination
of Kauffman diagrams of the form \(\tau_1\tau_2\), where
\(\tau_1\) is written in the generators \(t_1, \ldots, t_{b-2}\)
and \(\tau_2\) is written in the generators \(t_{b+1}, \ldots, t_{n-1}\).
It follows that \(\imm_\tau(M)=0\) unless \(\tau\) is of this form.
Note that, in this situation, \(f_{\tau_1\tau_2}(\sigma_1\sigma_2) = f_{\tau_1}(\sigma_1)f_{\tau_2}(\sigma_2)\).
Thus we have 
\begin{equation}\label{eq:8}
  \imm_{\tau_1\tau_2}(M) = \varepsilon \imm_{\tau_1}(A) \imm_{\tau_2}(B).
\end{equation}

Now observe that, for each \(m\in [k]\) and \( i,j\in [n] \) with \(j \ne b \),
we have 
\[
  \rho^{(m)}_i - \nu^{(m)}_j - i + j = \lambda^{(m)}_i - \mu^{(m)}_j - i + j,
\]
which holds for \( j>b \) by construction, and also for \( j<b \),
since, in this case,
\[
  \rho^{(m)}_i - \nu^{(m)}_j - i + j = (\lambda^{(m)}_i +
  \lambda^{(m)}_b) - (\mu^{(m)}_j + \lambda^{(m)}_b) - i + j =
  \lambda^{(m)}_i - \mu^{(m)}_j - i + j.
\]
Thus the matrices \(M\) and \( N \) are identical except in column \(b\).
Recall that \(\rho^{(m)}_b = \nu^{(m)}_b \) for all \(m\).
Thus \(N\) has the same block decomposition as \( M \) in \eqref{eq:6},
except with a \(1\) in place of \(\varepsilon\).
Thus \(\imm_{\tau_1\tau_2}(N) = \imm_{\tau_1}(A) \imm_{\tau_2}(B)\).
Therefore, by \eqref{eq:8}, \(\imm_{\tau_1\tau_2}(M)\) is a Schur-positive multiple of \(\imm_{\tau_1\tau_2}(N)\).
This proves the claim and completes the proof.
\end{proof}

The proof of \Cref{thm: TL multinetwork expansion} had two primary steps.
First, we expanded each Temperley--Lieb immanant as a sum over subnetworks, i.e., wrote
\[\imm_{\tau} \left( \tJT_{\lambda^{(1)} / \mu^{(1)}}(\vec{x}^{(1)}) *\cdots * \tJT_{\lambda^{(k)} / \mu^{(k)}}(\vec{x}^{(k)}) \right) = \sum_{\vec{H}} f_\tau(\beta(\vec{H})) \wt_{\underline{\vec x}}(\vec{H}).\]
Then, we argued that $f_\tau(\beta(\vec{H})) \geq 0$ whenever $\vec{H}$ is a multinetwork for a collection of $(3 \times 2)$-avoiding skew shapes.
The result below demonstrates that the proof method for \Cref{thm: TL multinetwork expansion}
will not work for collections of skew shapes that are not essentially $3 \times 2$ avoiding.
Note that if \( \tau=1 \), then \( f_\tau=f_1 \) is the sign character \( \sgn \).

\begin{prop}\label{prop: counterexample}
  Let $\lambda^{(1)} / \mu^{(1)}, \ldots, \lambda^{(k)} / \mu^{(k)}$
  be a collection of connected skew shapes
  that is not essentially $(3 \times 2)$-avoiding.
  Then there exists a multinetwork $\vec{H}$ for this collection satisfying $ f_1(\beta(\vec{H})) = \sgn(\beta(\vec{H})) < 0$.
\end{prop}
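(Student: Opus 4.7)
The plan is to construct a multinetwork $\vec{H}$ so that $\beta(H^{(t)}) = (1+\s_{a+1})(1+\s_a)$ in the component containing the $3\times 2$ block, while $\beta(H^{(s)}) = (1+\s_a)(1+\s_{a+1})$ for every $s \ne t$. Since both factors have $0/1$ coefficients, and the $\star$-product of such elements is the indicator of the intersection of their supports, this forces
\[
\beta(\vec{H}) \;=\; (1+\s_{a+1})(1+\s_a) \star (1+\s_a)(1+\s_{a+1}) \;=\; 1 + \s_a + \s_{a+1},
\]
so applying $\sgn$ term by term gives $1 - 1 - 1 = -1 < 0$. (The proposition implicitly requires $k \ge 2$; for $k = 1$ a short check shows that $\sgn \circ \beta$ vanishes on every subnetwork, since $\sgn$ is an algebra homomorphism and kills both the expansion of $\prod (1+\s_{i_j})$ and $\ker \theta$.)

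By the failure of the essentially-$(3 \times 2)$-avoiding condition, I can pick $t \in [k]$ and a row index $a$ so that $\lambda^{(t)}/\mu^{(t)}$ contains a $3 \times 2$ block on rows $a, a+1, a+2$ (equivalently $\mu^{(t)}_a \le \lambda^{(t)}_{a+2} - 2$) and $\lambda^{(s)}_b > \mu^{(s)}_b$ for every $s$ and every $b \in \{a, a+1, a+2\}$. Connectedness of each $\lambda^{(s)}/\mu^{(s)}$, combined with the nonemptiness of rows $a, a+1, a+2$, forces $\mu^{(s)}_a < \lambda^{(s)}_{a+1}$ and $\mu^{(s)}_{a+1} < \lambda^{(s)}_{a+2}$: otherwise rows $a$ (resp.\ $a+1$) and $a+1$ (resp.\ $a+2$) would share no column and the shape would be disconnected. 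In particular, in every component the consecutive-path pairs $p_a, p_{a+1}$ and $p_{a+1}, p_{a+2}$ are each able to share a vertex. For every $s \ne t$, I take $H^{(s)}$ with $p_i$ straight south for $i \notin \{a, a+1, a+2\}$, with a single essential intersection of $p_a, p_{a+1}$ placed strictly northeast of a single essential intersection of $p_{a+1}, p_{a+2}$; by \Cref{lem:beta for wiring diagrams} the resulting intersection sequence $(a, a+1)$ yields $\beta(H^{(s)}) = (1+\s_a)(1+\s_{a+1})$.

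The crux is to construct $H^{(t)}$ realizing the reversed intersection sequence $(a+1, a)$. I place
\[
u_1 \;=\; (\lambda^{(t)}_{a+2} + n - a - 2,\, y_1)
\quad\text{and}\quad
u_2 \;=\; (\mu^{(t)}_a + n - a,\, y_2)
\]
with $y_1 > y_2$, taking $u_1$ as the essential intersection of $p_{a+1}, p_{a+2}$ and $u_2$ as that of $p_a, p_{a+1}$. The block condition $\mu^{(t)}_a \le \lambda^{(t)}_{a+2} - 2$ is exactly what gives $x(u_1) \ge x(u_2)$, so $p_{a+1}$ can legitimately visit $u_1$ first and then $u_2$. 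To preserve the generalized-wiring-diagram structure I route $p_a$ to take all its southwest steps strictly above $y_1$ and then travel straight south through $u_2$, and $p_{a+2}$ to travel straight south down to $u_1$ and take all its southwest steps strictly below $y_2$; this keeps $p_a$ and $p_{a+2}$ vertex-disjoint, so \Cref{lem:beta for wiring diagrams} gives $\beta(H^{(t)}) = (1+\s_{a+1})(1+\s_a)$ and the sign computation above finishes the proof. The main obstacle is precisely this reversed-order construction: \Cref{lem:beta for ribbons}(4) forbids it in any $(3\times 2)$-avoiding shape, and the $3 \times 2$ block inequality $\mu^{(t)}_a \le \lambda^{(t)}_{a+2} - 2$ is exactly the ingredient that defeats that lemma.
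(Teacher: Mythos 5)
Your overall strategy coincides with the paper's: force $\beta(H^{(t)})=(1+\s_{a+1})(1+\s_a)$, force $\beta(H^{(s)})=(1+\s_a)(1+\s_{a+1})$ for $s\neq t$, and star-multiply to get $1+\s_a+\s_{a+1}$, of sign $-1$; the algebra (including your correct side remark that $k\ge 2$ is implicitly needed) is fine. The genuine gap is in what you yourself call the crux, the construction of $H^{(t)}$ with reversed intersection sequence. With your routing, $p_{a+2}$ occupies the column $x(u_1)=\lambda^{(t)}_{a+2}+n-a-2$ at \emph{every} height $\ge y_1$ (it runs straight south from $A_{a+2}$ down through $u_1$), while $p_a$, forced to take all of its southwest steps strictly above $y_1$, must pass through that same column at some height $\ge y_1$ on its way from column $\lambda^{(t)}_a+n-a$ down to column $\mu^{(t)}_a+n-a\le x(u_1)$ (a south-southwest path has a vertex in every intermediate column). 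So $p_a$ and $p_{a+2}$ do share a vertex, contrary to your claim; moreover the noncrossing order requires $p_a$ to be weakly right of $p_{a+2}$ at every height, which your routing violates at height $y_1$ whenever $\mu^{(t)}_a<\lambda^{(t)}_{a+2}-2$, and forces a three-path vertex when $\mu^{(t)}_a=\lambda^{(t)}_{a+2}-2$. In the noncrossing family covering your union one therefore gets either a vertex shared by three paths (so $H^{(t)}$ is not a generalized wiring diagram and $\beta(H^{(t)})\in\ker\theta$) or extra essential intersection points, and in either case the identification $\beta(H^{(t)})=(1+\s_{a+1})(1+\s_a)$ via \Cref{lem:beta for wiring diagrams} is not justified, so the final sign computation does not follow from your construction.

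The construction can be repaired, and the repair is essentially what the paper does: the strand entering at $A_{a+2}$ must vacate its starting column strictly between the heights of the two intersection points (in the paper the covering family has type a $3$-cycle, with $p_{a+2}:A_{a+2}\to B_a$, $p_{a+1}:A_{a+1}\to B_{a+2}$, $p_a:A_a\to B_{a+1}$), so that the lower intersection lies on the continuation of the strand that passed through $u_1$, and the strand from $A_a$ crosses the column of $A_{a+2}$ only after it has been vacated; equivalently, in the noncrossing picture the southwest steps of $p_a$ should be scheduled \emph{between} the two intersection heights, not above them. A smaller slip of the same kind: for $s\ne t$ you take $p_i$ straight south for $i\notin\{a,a+1,a+2\}$, which is impossible whenever row $i$ of $\lambda^{(s)}/\mu^{(s)}$ is nonempty; those paths merely need to be routed (as the paper tacitly does) so as to create no further intersections, which is possible but must be arranged rather than assumed.
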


\begin{proof}
  As in the proof of \Cref{lem: beta for multinetworks}, let \(\star\)
  be the product in \(\CC[\sn]\) defined by
  \(u \star w = u\delta_{u,w}\) for \(u,w \in \sn\) and extended by
  linearity. Let \( n \) be an integer with
  \( \ell(\lambda^{(i)})\le n \) for each \( i \). By the assumption,
  there is \(t\in [k]\) such that \(\lambda^{(t)} / \mu^{(t)}\)
  contains a \(3 \times 2\) block of cells spanning rows \(a\),
  \(a+1\), and \(a+2\), and that \(\lambda^{(s)}_b > \mu^{(s)}_b \)
  for all \(s \neq t\) and all \(b \in \{a,a+1,a+2\}\). It follows
  that $\lambda^{(t)}_{a+2}-2 \geq \mu^{(t)}_{a}$. In particular,
  $A(\lambda^{(t)})_{a+2}$ lies weakly to the right of
  $B(\mu^{(t)})_a$ in the plane. Therefore, we may construct a
  $(\lambda^{(t)}, \mu^{(t)})$-path family
  $\vec{p} = (p_1, \ldots, p_n)$ with
  \begin{align*}
  p_{a+2} &: A(\lambda^{(t)})_{a+2} \to B(\mu^{(t)})_a, & & &
  p_{a+1} &: A(\lambda^{(t)})_{a+1} \to B(\mu^{(t)})_{a+2}, \\
  p_{a} &: A(\lambda^{(t)})_{a} \to B(\mu^{(t)})_{a+1}, & \text{ and } & &
  p_j &: A(\lambda^{(t)})_j \to B(\mu^{(t)})_j \text{ for } j \not\in \{a,a+1,a+2\}.
  \end{align*}
  We may do this in such a way that the
  \((\lambda^{(t)}, \mu^{(t)})\)-subnetwork \(H^{(t)}\) covered
  by \(\vec p\) with intersection sequence \( (a+1,a) \) as shown in 
  the diagram below, where \( A_i = A(\lambda^{(t)})_{i} \)
  and \( B_i = B(\mu^{(t)})_{i} \).
\begin{center}
  \begin{tikzpicture}[scale=.35,baseline={(0,1)}]
    \node[above] at (9, 10) {\small $A_a$};
    \node[above] at (7.125, 10) {\small $A_{a+1}$};
    \node[above] at (4.75, 10) {\small $A_{a+2}$};
    \node[below] at (3, 0) {\small $B_a$};
    \node[below] at (1, 0) {\small $B_{a+1}$};
    \node[below] at (-1.333, 0) {\small $B_{a+2}$};
    \draw[ultra thick, red] (5,10) -- (5,6) -- (5,6);
    \draw[ultra thick, red] (5, 6) -- (3, 4);
    \draw[ultra thick, red] (3, 4) -- (3,4) -- (3,0);
    \draw[ultra thick, YellowGreen] (7,10) -- (5,8) -- (5,8);
    \draw[ultra thick, YellowGreen] (5, 8) -- (1,4);
    \draw[ultra thick, YellowGreen] (1,4) -- (-1,2) -- (-1,0);
    \draw[ultra thick, blue] (9,10) -- (9,8) -- (7,6);
    \draw[ultra thick, blue] (7, 6) -- (3, 2);
    \draw[ultra thick, blue] (3,2)-- (1,0);
    \node[above] at (11, 10) {\small {\color{gray}$A_{a-1}$}};
    \node[above] at (0, 10) {\small {\color{gray}$A_{a+3}$}};
    \node[below] at (7, 0) {\small {\color{gray}$B_{a-1}$}};
    \node[below] at (-4, 0) {\small {\color{gray}$B_{a+3}$}};
    \draw[thick] (11,10) -- (11,7) -- (7,3) -- (7,0);
    \draw[thick] (0,10) -- (0,7) -- (-4,3) -- (-4,0);
    \draw[thick] (5,8) circle (4mm);
    \node[left] at (4.5,8) {$1+\s_{a+1}$};
    \draw[thick] (3,2) circle (4mm);
    \node[right] at (3.5,2) {$1+\s_{a}$};
    \end{tikzpicture}
\end{center}
Then, by \Cref{lem:beta for wiring diagrams}, we have
\(\beta(H^{(t)}) = (1+\s_{a+1})(1+\s_a)\).

  Recall that \(\lambda^{(s)}_b > \mu^{(s)}_b \) for all \(s \neq t\)
  and all \(b \in \{a,a+1,a+2\}\). Since each of these skew shapes is
  connected, this implies that for \( i=a \) and \( i=a+1 \), we have
  \(\mu^{(s)}_i \leq \lambda^{(s)}_{i+1}-1\), that is,
  \(A(\lambda^{(s)})_{i+1}\) lies weakly to the right of
  \(B(\lambda^{(s)})_i\) in the plane. Thus we may construct a
  $(\lambda^{(s)},\mu^{(s)})$-path family $\vec{p}= (p_1, \ldots, p_n)$ so that
        \begin{align*}
        p_{a+2} &: A(\lambda^{(s)})_{a+2} \to B(\mu^{(s)})_{a+1}, & & &
        p_{a+1} &: A(\lambda^{(s)})_{a+1} \to B(\mu^{(s)})_{a}, \\
        p_{a} &: A(\lambda^{(s)})_{a} \to B(\mu^{(s)})_{a+2}, & \text{ and } & &
        p_j &: A(\lambda^{(s)})_j \to B(\mu^{(s)})_j \text{ for } j \not\in \{a,a+1,a+2\}.
        \end{align*}
  We may do this in such a way that the
  \((\lambda^{(s)}, \mu^{(s)})\)-subnetwork \(H^{(s)}\) covered
  by \(\vec p\) with intersection sequence \( (a,a+1) \) as shown in 
  the diagram below, where \( A_i = A(\lambda^{(s)})_{i} \)
    and \( B_i = B(\mu^{(s)})_{i} \).
   \begin{center}
     \begin{tikzpicture}[scale=.35,baseline={(0,1)}]
    \node[above] at (9, 10) {\small $A_a$};
    \node[above] at (7.125, 10) {\small $A_{a+1}$};
    \node[above] at (4.75, 10) {\small $A_{a+2}$};
    \node[below] at (3, 0) {\small $B_a$};
    \node[below] at (1, 0) {\small $B_{a+1}$};
    \node[below] at (-1.333, 0) {\small $B_{a+2}$};
    \draw[ultra thick, red] (5,10) -- (5,8);
    \draw[ultra thick, red] (5, 8) -- (1, 4);
    \draw[ultra thick, red] (1,4) -- (1,0);
    \draw[ultra thick, YellowGreen] (7,10) -- (7,6);
    \draw[ultra thick, YellowGreen] (7,6) -- (3,2);
    \draw[ultra thick, YellowGreen] (3,2) -- (3,0);
    \draw[ultra thick, blue] (9,10) -- (5,6);
    \draw[ultra thick, blue] (5, 6) -- (3, 4);
    \draw[ultra thick, blue] (3,4)-- (-1,0);
    \node[above] at (11, 10) {\small {\color{gray}$A_{a-1}$}};
    \node[above] at (0, 10) {\small {\color{gray}$A_{a+3}$}};
    \node[below] at (7, 0) {\small {\color{gray}$B_{a-1}$}};
    \node[below] at (-4, 0) {\small {\color{gray}$B_{a+3}$}};
    \draw[thick] (11,10) -- (11,7) -- (7,3) -- (7,0);
    \draw[thick] (0,10) -- (0,7) -- (-4,3) -- (-4,0);
    \draw[thick] (7,8) circle (4mm);
    \node[right] at (7.5,8) {$1+\s_{a}$};
    \draw[thick] (1,2) circle (4mm);
    \node[left] at (0.5,2) {$1+\s_{a+1}$};
    \end{tikzpicture}
   \end{center}
Then, by \Cref{lem:beta for wiring diagrams}, we have
\(\beta(H^{(t)}) = (1+\s_{a})(1+\s_{a+1})\).
 
    The resulting multinetwork $\vec{H} = (H^{(1)}, \ldots, H^{(k)})$ satisfies
    \begin{align*}
    \beta(\vec{H}) &= \underbrace{\left[ (1+\s_{a})(1+\s_{a+1}) \right] \star \cdots \star \left[ (1+\s_{a})(1+\s_{a+1}) \right]}_{k-1 \rm\ factors} \star \left[ (1+\s_{a+1})(1+\s_{a}) \right] \\
    &= \left[ (1+\s_{a})(1+\s_{a+1}) \right] \star \left[ (1+\s_{a+1})(1+\s_{a}) \right] \\
    &= \left[ 1 + \s_a + \s_{a+1} + \s_a \s_{a+1} \right] \star \left[ 1 + \s_a + \s_{a+1} + \s_{a+1} \s_{a} \right] \\ 
    &= 1 + \s_a + \s_{a+1}.
    \end{align*}
    In particular, \(\sgn(\beta(\vec{H})) = \sgn(1 + \s_a + \s_{a+1}) = 1 -1 -1 = -1. \)
\end{proof}

The inequality $\sgn(\beta(\vec H)) \geq 0$ means that there exists an
injection from the set of path families covering $H$ with negative
sign into the set of those with positive sign. \Cref{prop:
  counterexample} demonstrates that, outside the setting of
\Cref{cor:essential-implies-s-pos}, there is no such injection.
Therefore, if one were to try to solve \Cref{conj: Sokal's conjecture}
or \Cref{conj: stronger sokal's conjecture} by constructing a
sign-reversing injection of path families, the injection must not
always preserve the underlying multiset of edges used.

In addition, the required injection must depend on the directed graph structure specifically for south-southwest lattice paths. 
Consider ordinary Jacobi--Trudi matrices,
\[
  \HJT_{\lambda / \mu}(\vec x) := (h_{\lambda_i-\mu_j + i-j}(\vec
  x))_{i,j=1}^{\ell(\lambda)}.
\]
Note that
$\det(\HJT_{\lambda / \mu}(\vec x)) = s_{\lambda/\mu}(\vec x)$.
Similarly to dual Jacobi--Trudi matrices, ordinary Jacobi--Trudi
matrices can be realized by a lattice path construction with
south $(0, -1)$ and west $(-1, 0)$ steps. However, Hadamard
products of ordinary Jacobi--Trudi matrices do not always have
monomial-positive determinants. One can check, for example, that
$\det(\HJT_{(2,2,2)}(\vec{x}) * \HJT_{(2,2,2)}(\vec{y}))$ is not
monomial-positive.

\section{Representation-theoretic interpretation}\label{Sec: rep theory construction}

To provide further insight on Sokal's conjecture, Stanley \cite{Stanley2024} 
introduced a ring homomorphism 
\( \Phi_{\vec x,\vec y}: \Lambda \to \Lambda(\vec x) \otimes
\Lambda(\vec y) \) defined by
\( \Phi_{\vec x,\vec y}(e_n) = e_n(\vec x) e_n(\vec y) \).
In \Cref{thm: Schur expansion of JT_R*JT_R}, we have seen that
\[
\Phi_{\vec x,\vec y}(s_{R}) 
= \det(\tJT_{R'}(\vec x) * \tJT_{R'}(\vec y)).
\]
is \( s \)-positive for any ribbon $R$.
This naturally leads
to the question of constructing an
\( \mathfrak{S}_n \times \mathfrak{S}_n \)-module whose image
under the Frobenius characteristic map equals $\Phi_{\vec x,\vec y}(s_{R})$.
In this section, we answer this question by constructing a
simplicial complex arising from the Boolean poset whose reduced homology
yields the desired \( \mathfrak{S}_n \times \mathfrak{S}_n \)-module.
To do this, we need some preparations.

From now on, we assume that \(P\) is a finite poset with a unique
minimal element \(\hat{0}\) and a unique maximal element \(\hat{1}\),
and that every maximal chain in \(P\) has the same length, denote by
\(\ell(P)\). Here, the \emph{length} of a chain is the number of
elements in the chain minus one. Let
\(r: P \to \{0,1,\dots,\ell(P)\}\) denote the rank function of \(P\).
Note that $r(\hat{0})=0$ and $r(\hat{1})=\ell(P)$. For
\(A \subseteq [\ell(P)-1]\), the \emph{rank-selected subposet} \(P_A\)
is defined as
\[
    P_A := \{s \in P : r(s) \in A\} \cup\{\hat0,\hat1\}.
\]
The \emph{order complex} \(\Delta(P)\) of \( P \) is the simplicial
complex whose \(k\)-faces are the chains \(x_0 < x_1 < \cdots < x_k\)
in \(P \setminus \{\hat{0}, \hat{1}\}\). We denote by
\(\tilde{H}_i(P) := \tilde{H}_i(\Delta(P))\) the \(i\)-th reduced
simplicial homology group of \( \Delta(P) \) (over \(\mathbb{C}\)).
Recall that for any simplicial complex \( \mathcal{K} \), we have
\( \tilde{H}_{-1}(\mathcal{K}) = 0 \) unless \( \mathcal{K} \) is the
empty simplex \( \emptyset \). For \( \mathcal{K}=\emptyset \), we
have \( \tilde{H}_{-1}(\emptyset) = \mathbb{C} \), and
\( \tilde{H}_i(\emptyset) = 0 \) for all \( i \ge 0 \).

We need two important results in poset topology. One is a result
by Stanley \cite{Stanley1982} on group actions on finite posets, and
the other is a result by Bj\"orner and Wachs \cite{Bjorner1983} on
EL-shellable posets.

Suppose \( G \) is a group of automorphisms of a poset \( P \) with
\( \ell(P)=n \). For a subset \( A \subseteq [n-1] \), the group
\( G \) acts on the set of maximal chains of the rank-selected
subposet \( P_A \). We denote by \( \alpha_A \) the \( G \)-module
induced by this action. Note that \(G\) also acts on each reduced
homology group \(\tilde{H}_i(P_A)\) for \(-1 \leq i < |A|\). Hence, we
also consider \( \tilde{H}_i(P_A) \) as a \( G \)-module. Define the
virtual \( G \)-module \(\beta_A\) by
\begin{equation}\label{eq:5}
    \beta_A := \sum_{i=-1}^{|A|-1} (-1)^{|A| - i - 1} \tilde{H}_i(P_A).
\end{equation}
Using a version of the Hopf--Lefschetz
fixed-point formula, Stanley proved the following theorem.
\begin{thm}\label{thm: Stanley's relations for alpha and beta}
  \cite[1.1.~Theorem]{Stanley1982} Given a group \( G \) of
  automorphisms of a poset \( P \), the \( G \)-modules \(\alpha_A\)
  and the virtual \( G \)-modules \(\beta_A\) are related by
\begin{align}\label{eq: Stanley's relations for alpha and beta}
    \alpha_A = \sum_{B \subseteq A} \beta_B, \qquad \text{and} \qquad 
    \beta_A = \sum_{B \subseteq A} (-1)^{|A| - |B|} \alpha_B.  
\end{align}  
\end{thm}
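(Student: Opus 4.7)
The plan is to prove the second identity by applying the equivariant Hopf trace principle to the order complex $\Delta(P_A)$, and then deduce the first identity by Möbius inversion on the Boolean lattice of subsets of $A$ (each is the Möbius inverse of the other). The key tool is the following standard fact: for any finite simplicial complex $\mathcal{K}$ with a simplicial $G$-action, an alternating sum argument across the short exact sequences of cycles and boundaries yields
\[
\sum_{i=-1}^{\dim \mathcal{K}} (-1)^i\, \tilde{C}_i(\mathcal{K}) \;=\; \sum_{i=-1}^{\dim \mathcal{K}} (-1)^i\, \tilde{H}_i(\mathcal{K})
\]
as an identity of virtual $G$-modules in the Grothendieck group.

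The main step is to identify each reduced chain module $\tilde{C}_i(\Delta(P_A))$ with a sum of the permutation modules $\alpha_B$. An $i$-face of $\Delta(P_A)$ is a chain $x_0 < x_1 < \cdots < x_i$ in $P_A \setminus \{\hat{0}, \hat{1}\}$. Partitioning such chains according to their set of ranks $B := \{r(x_0), \ldots, r(x_i)\} \subseteq A$, those with a fixed rank set $B$ of cardinality $i+1$ are in $G$-equivariant bijection with the maximal chains of $P_B$ (one simply appends $\hat{0}$ and $\hat{1}$). Hence, as $G$-modules,
\[
\tilde{C}_i(\Delta(P_A)) \;\cong\; \bigoplus_{\substack{B \subseteq A \\ |B| = i+1}} \alpha_B,
\]
where the summand for $i = -1$ is the trivial one-dimensional module corresponding to $B = \emptyset$ and the unique maximal chain $\hat{0} < \hat{1}$ of $P_\emptyset$.

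Substituting this decomposition into the Hopf trace identity yields $\sum_{B \subseteq A} (-1)^{|B|-1} \alpha_B = (-1)^{|A|-1} \beta_A$ in the Grothendieck group, after a short sign check against the definition of $\beta_A$. Multiplying through by $(-1)^{|A|-1}$ gives the second identity, and standard Möbius inversion on the Boolean lattice then yields the first. The main obstacle is essentially administrative: keeping the signs and the degree-$(-1)$ contribution (the empty simplex, responsible for the $B = \emptyset$ term) straight, and verifying that the bijection between rank-$B$ chains of $\Delta(P_A)$ and maximal chains of $P_B$ is genuinely $G$-equivariant. No deeper homological or topological input is required beyond the Hopf trace principle.
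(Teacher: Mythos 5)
Your proof is correct, but note that the paper does not actually prove this theorem: it is quoted from Stanley's 1982 paper, with the remark that Stanley's argument uses ``a version of the Hopf--Lefschetz fixed-point formula.'' Your reconstruction---decompose $\tilde{C}_i(\Delta(P_A))$ by rank sets into $\bigoplus_{|B|=i+1}\alpha_B$ (valid since $G$ preserves ranks, so the permutation action carries no orientation signs), apply the Hopf trace principle, untangle the signs against the definition of $\beta_A$, and M\"obius-invert---is exactly Stanley's argument in modern phrasing, so there is nothing to flag beyond the fact that you are reproving a cited result rather than matching an in-paper proof.
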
  

Next, we recall the notion of EL-shellable posets. For a poset \( P \)
with \( \ell(P)=n \), let \( E(P) \) denote the set of edges in the
Hasse diagram of \( P \). An \emph{edge labeling} of a poset \( P \)
is a function \( L: E(P) \to K \), where \( K \) is a poset equipped
with a partial order \( <_K \). Given such a labeling, we can
associate a sequence of labels to each maximal chain
\begin{equation}\label{eq:4}
c = \{\hat{0}=u_0 \lessdot u_1 \lessdot \cdots \lessdot u_n \lessdot u_{n+1} = \hat{1}\}
\end{equation}  
by defining its corresponding word as  
\[
L(c) = L(u_0, u_1)L(u_1, u_2) \cdots  L(u_n, u_{n+1}).
\]  
A maximal chain is said to be \emph{increasing} if its labels form a strictly increasing sequence in \( K \), meaning that  
\[
  L(u_0,u_1) <_K L(u_1,u_2) <_K \dots <_K L(u_n,u_{n+1}).
\]  
The \emph{descent set} of the maximal chain \( c \) in \eqref{eq:4} is
defined to be
\[
    \Des(c) := \{ i \in [n] : L(u_{i-1}\lessdot u_i) \ge_K L(u_i \lessdot u_{i+1})\}.
\]
In particular, an increasing maximal chain \(c\) has an empty descent set.

The lexicographic order on these words induces an ordering on the
maximal chains of \( P \). This induced ordering plays a crucial role
in defining a structural property of posets known as EL-shellability.

\begin{defn} \cite[Definition~2.1]{Bjorner1983} A bounded poset
  \( P \) is said to be \emph{edge lexicographically shellable
    (EL-shellable)} if there exists an edge labeling of \( P \) such
  that, within every closed interval \( [x,y] \) of \( P \), there is
  a unique increasing maximal chain. Moreover, this chain must be
  the lexicographically smallest among all maximal chains in \( [x,y] \).
\end{defn}  

EL-shellability has significant implications for the topology of order
complexes. In particular, it allows us to describe the homotopy type
of rank-selected subposets. 

\begin{thm}\label{thm: rank selection of EL-shellable poset}
  \cite[Theorem 8.1]{Bjorner1983} Let \( P \) be a pure, EL-shellable
  poset, and let \( A\subseteq [\ell(P)-1] \). Then the order complex
  of the rank-selected subposet \( P_A \) is homotopy equivalent to a
  wedge of \((|A|-1)\)-dimensional spheres.
\end{thm}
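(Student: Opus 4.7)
The plan is to establish the stronger statement that the order complex $\Delta(\bar P_A)$, where $\bar P_A := P_A \setminus \{\hat 0, \hat 1\}$, is a pure shellable simplicial complex of dimension $|A|-1$, and then invoke the classical result that a pure shellable $d$-dimensional complex is homotopy equivalent to a wedge of $d$-spheres. Purity is immediate: every maximal chain of $\bar P_A$ has exactly $|A|$ elements (one at each selected rank), giving a facet of dimension $|A|-1$.

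To shell $\Delta(\bar P_A)$, I would promote the EL-labeling $L \colon E(P) \to K$ to a labeling $\bar L$ on the covering relations of $P_A$ as follows: for each $x \lessdot_{P_A} y$, the interval $[x,y]_P$ admits a unique increasing maximal chain by EL-shellability, and I set $\bar L(x,y)$ to be the word of $L$-labels along that chain, with values in the lexicographically ordered set $K^*$ of words over $K$. Each maximal chain $c$ of $P_A$ is then assigned the concatenation $\bar L(c)$ of its $\bar L$-labels along all covering relations (including those meeting $\hat 0$ and $\hat 1$), and I order the maximal chains of $P_A$ by the resulting lexicographic order on these words.

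The main obstacle, and the technical heart of the argument, is verifying the shelling condition: for each facet $c$ and each earlier facet $c'$ in this order, one must produce a facet $c''$ strictly earlier than $c$ with $c \cap c' \subseteq c \cap c''$ and $|c \cap c''| = |c| - 1$. I would prove this by locating the first covering relation of $c$ at which $\bar L(c)$ fails to be strictly increasing (relative to the positions dictated by $c'$), then applying EL-shellability within the corresponding interval of $P$ to extract a lex-smaller saturated chain. This saturated chain determines a replacement $c''$ agreeing with $c$ outside a single rank in $A$. The delicate points are to check that $c''$ still has every element at a rank in $A$ (so that $c''$ is a maximal chain of $P_A$, not merely of $P$) and that the lex comparison of $\bar L(c'')$ with $\bar L(c)$ propagates correctly across the concatenation of words from consecutive intervals.

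Once shellability is established, the wedge-of-spheres conclusion with dimension $|A|-1$ is automatic. As a byproduct of the construction, the number of spheres in the wedge equals the number of maximal chains of $P_A$ whose induced labeling $\bar L(c)$ is everywhere descending at the transitions between consecutive intervals, which in turn corresponds to a combinatorially meaningful subset of the maximal chains of $P$ controlled by the descent set relative to $A$.
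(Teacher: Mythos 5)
This statement is not proved in the paper at all: it is imported verbatim, with a citation, as Theorem~8.1 of Bj\"orner--Wachs, so the only meaningful comparison is with the classical proof. Your outline follows exactly that classical route: purity of $\Delta(\bar P_A)$ is immediate, a pure shellable complex of dimension $|A|-1$ is a wedge of $(|A|-1)$-spheres, the shelling order is the lexicographic order on the words obtained by filling each gap of a chain of $P_A$ with the unique increasing chain of the corresponding interval of $P$, and your byproduct description of the number of spheres (chains whose filling fails to increase at every junction) is precisely the classical count of maximal chains of $P$ with descent set $A$. So the strategy is right; the problem is that the proposal stops exactly where the content lies.

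The genuine gap is that the shelling verification is only a plan, not an argument. For facets $c' <_{\mathrm{lex}} c$ of $\Delta(\bar P_A)$ you must exhibit a rank $a \in A$ at which $c$ and $c'$ \emph{differ} and a replacement $c''$ of the rank-$a$ element of $c$ with $\bar L(c'')$ strictly earlier than $\bar L(c)$; your prescription (``the first covering relation of $c$ at which $\bar L(c)$ fails to be strictly increasing, relative to the positions dictated by $c'$'') is never shown to land at such a rank, and the propagation of the lexicographic comparison across the concatenated blocks is asserted rather than proved. Moreover, one of the two ``delicate points'' you flag is not actually delicate: if $\bar L(c)$ fails to increase at the junction at rank $a$, then the element of $c$ at rank $a$ cannot be the rank-$a$ element $z_0$ of the unique increasing chain of the ambient interval $[x,y]$ of $P$ (whose restrictions to $[x,z_0]$ and $[z_0,y]$ are again the unique increasing chains of those subintervals), so a legal flip target inside $P_A$ exists automatically; the real work is the intersection condition $c \cap c' \subseteq c \cap c''$ together with strict precedence. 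Two standard ways to close the gap: (i) note that the increasing filling of $c$ is the lexicographically least maximal chain of $P$ containing $c$, so your order on the facets of $\Delta(\bar P_A)$ is the ``earliest containing facet'' order with respect to the lexicographic EL-shelling of the rank-colored (balanced) complex $\Delta(\bar P)$, and invoke Bj\"orner's theorem that type selection preserves shellability of balanced complexes (a codimension-one flip at a rank outside $A$ would contradict minimality of the containing facet, forcing the flip to occur at a rank in $A$); or (ii) carry out the descent-flip argument in detail, using that the increasing filling of $c$ can have descents only at junction positions. As submitted, the proposal is an outline of the correct proof rather than a proof.
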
 

We now define the main ingredients of the representation-theoretic
interpretation for the expression 
\(\Phi_{\vec x,\vec y}(s_{R})\).
Let \(B_n\) be the
Boolean poset of \([n]\), that is, the poset of all subsets of
\( [n] \) ordered by inclusion. We define another poset
\(\tilde{B}_n\) by
\[
    \tilde{B}_n := \{(S, T) \in B_n \times B_n : |S| = |T|\},
\]  
where \((S, T) \le (A, B)\) in \(\tilde{B}_n\) if
\(S \subseteq A\) and \(T \subseteq B\). There is a natural action of
the symmetric group \(\mathfrak{S}_n\) on \(B_n\), given by
\(\sigma \cdot S = \{\sigma(s) : s \in S\}\). This action extends to
an \(\mathfrak{S}_n \times \mathfrak{S}_n\)-action on \(\tilde{B}_n\),
defined by
\[
    (\sigma, \tau) \cdot (S, T) = (\sigma \cdot S, \tau \cdot T).
\]
Moreover, the group \(\mathfrak{S}_n \times \mathfrak{S}_n\) also acts
on each reduced homology group \(\tilde{H}_{i}((\tilde{B}_n)_A)\) of
the rank-selected subposet \((\tilde{B}_n)_A\). Thus, we can consider
\(\tilde{H}_{i}((\tilde{B}_n)_A)\) as an
\(\mathfrak{S}_n \times \mathfrak{S}_n\)-module. Now we state the main
result of this section.
\begin{thm}\label{thm: representation theoretic model for Phi(s_R)}
  Let \( R \) be a ribbon of size \(n\), and consider the
  \(\mathfrak{S}_n\times \mathfrak{S}_n\)-module
\[
    \tilde{H}_{|\Des(R')|-1}((\tilde{B}_n)_{\Des(R')})
\] 
as described above. Then we have
\[
\Phi_{\vec x, \vec y} (s_R)
=\omega\left(\Frob_{\vec x, \vec y}
\left(\tilde{H}_{|\Des(R')|-1}((\tilde{B}_n)_{\Des(R')})\right)\right).
\]
\end{thm}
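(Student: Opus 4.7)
My plan is to match both sides of the identity by computing the equivariant Frobenius characteristic of the virtual $\mathfrak{S}_n \times \mathfrak{S}_n$-module $\beta_{\Des(R')}$ from \eqref{eq:5} through Stanley's rank-selection formula, and to simultaneously identify $\beta_{\Des(R')}$ with the top reduced homology of $(\tilde B_n)_{\Des(R')}$ by exhibiting an EL-labeling of $\tilde B_n$.

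First, I would reduce the left-hand side to a known Schur expansion. By the Jacobi--Trudi identity $s_R = \det \tJT_{R'}$, and since $\Phi_{\vec x, \vec y}$ is a ring homomorphism sending $e_k$ to $e_k(\vec x) e_k(\vec y)$, it commutes with the determinant, giving $\Phi_{\vec x, \vec y}(s_R) = \det(\tJT_{R'}(\vec x) * \tJT_{R'}(\vec y))$. Applying \Cref{thm: Schur expansion of JT_R*JT_R} to the ribbon $R'$, then applying $\omega$, using $\omega(s_\lambda) = s_{\lambda'}$ together with the standard identity $f^{\lambda'}(I) = f^\lambda([n-1] \setminus I)$, and finally reindexing $I,J$ by complementation in $[n-1]$, converts the intersection condition $I \cap J = \Des(R)$ into a union condition. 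Since $\NDes(R) = \Des(R')$ for ribbons, one obtains
\[
\omega\bigl(\Phi_{\vec x, \vec y}(s_R)\bigr) = \sum_{\lambda, \mu \vdash n}\, \sum_{\substack{I, J \subseteq [n-1] \\ I \cup J = \Des(R')}} f^\lambda(I)\, f^\mu(J)\, s_\lambda(\vec x)\, s_\mu(\vec y).
\]

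Next, I would compute $\Frob_{\vec x, \vec y}(\beta_A)$ for $\tilde B_n$ and an arbitrary $A = \{a_1 < \cdots < a_s\} \subseteq [n-1]$. Maximal chains of $(\tilde B_n)_A$ biject with pairs of ordered set partitions of $[n]$ of type $\kappa := (a_1, a_2 - a_1, \ldots, n - a_s)$, so $\Frob_{\vec x, \vec y}(\alpha_A) = h_\kappa(\vec x)\, h_\kappa(\vec y)$. Substituting this into the M\"obius inversion $\beta_A = \sum_{B \subseteq A}(-1)^{|A|-|B|}\alpha_B$ from \Cref{thm: Stanley's relations for alpha and beta}, expanding each factor via the Kostka identity $h_\kappa = \sum_\lambda \#\{T \in \SYT(\lambda) : \Des(T) \subseteq B\}\, s_\lambda$, and collapsing the standard inclusion--exclusion over $B$ yields
\[
\Frob_{\vec x, \vec y}(\beta_A) = \sum_{\lambda, \mu \vdash n}\, \sum_{\substack{I, J \subseteq [n-1] \\ I \cup J = A}} f^\lambda(I)\, f^\mu(J)\, s_\lambda(\vec x)\, s_\mu(\vec y),
\]
which matches the previous display when $A = \Des(R')$.

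Finally, to upgrade the match from the virtual module $\beta_A$ to the genuine module $\tilde H_{|A|-1}((\tilde B_n)_A)$, I would show that $\tilde B_n$ is EL-shellable via the labeling $L((S,T) \lessdot (S \cup \{i\}, T \cup \{j\})) := (i,j) \in [n] \times [n]$ equipped with the coordinatewise partial order. In an interval $[(S_0, T_0), (S_1, T_1)]$ with $S_1 \setminus S_0 = \{a_1 < \cdots < a_k\}$ and $T_1 \setminus T_0 = \{b_1 < \cdots < b_k\}$, a rising maximal chain must strictly increase both coordinates at every step, which forces the unique pairing $(a_t, b_t)$ added in order of $t$; a direct comparison then confirms that this chain is lexicographically smallest among maximal chains in the interval. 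Invoking \Cref{thm: rank selection of EL-shellable poset} yields that $(\tilde B_n)_A$ is homotopy equivalent to a wedge of $(|A|-1)$-spheres, so $\tilde H_i((\tilde B_n)_A) = 0$ for $i < |A|-1$ and $\beta_A = \tilde H_{|A|-1}((\tilde B_n)_A)$ as genuine modules. The main obstacle I anticipate is the detailed verification of EL-shellability: because the product order on $[n] \times [n]$ is not total, one must carefully handle lex-comparisons against non-rising chains whose initial label is incomparable to that of the rising chain, and confirm that the rising chain remains lex-smallest at each first disagreement.
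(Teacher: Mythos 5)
Your steps (2) and (3) are essentially the paper's own proof of its key lemma (\Cref{lem:1}): the same labeling $L((S,T),(S\cup\{i\},T\cup\{j\}))=(i,j)$, \Cref{thm: rank selection of EL-shellable poset} to concentrate homology in top degree, and \Cref{thm: Stanley's relations for alpha and beta} together with $\Frob_{\vec x,\vec y}(\alpha_B)=h_{\comp(B)}(\vec x)h_{\comp(B)}(\vec y)$; your extra inclusion--exclusion correctly converts $\sum_{B\subseteq A}(-1)^{|A|-|B|}h_{\comp(B)}(\vec x)h_{\comp(B)}(\vec y)$ into $\sum_{\lambda,\mu}\sum_{I\cup J=A}f^{\lambda}(I)f^{\mu}(J)s_\lambda(\vec x)s_\mu(\vec y)$. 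Where you differ is step (1): the paper never expands in Schur functions, but writes $s_R=\sum_{A\subseteq\Des(R')}(-1)^{|\Des(R')|-|A|}e_{\comp(A)}$ and matches $\omega(\Phi_{\vec x,\vec y}(s_R))$ directly against the $h$-expansion of \Cref{lem:1}, whereas you route the left-hand side through \Cref{thm: Schur expansion of JT_R*JT_R}. (On your anticipated obstacle: the lex-comparison works if labels are compared in the weak componentwise order on $\ZZ\times\ZZ$; within any maximal chain of an interval all first coordinates are distinct and all second coordinates are distinct, so the rising chains are the same for either order, and at the first disagreement the rising chain's label $(a_t,b_t)$ is weakly below, and distinct from, the competitor's label.)

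The genuine gap is the assertion ``$\NDes(R)=\Des(R')$ for ribbons,'' which is false: the two sets are related by the reversal $i\mapsto n-i$, not by equality. For example, $R=(2,1)$ has $\Des(R)=\{2\}$ and $\NDes(R)=\{1\}$, while $R'=(2,1)=R$, so $\Des(R')=\{2\}$. (This is precisely why the paper works with the $180^\circ$ rotation $\overline{R}$ of $R'$, which satisfies $\Des(\overline{R})=\NDes(R)$, in the proof of \Cref{cor:det power expansion}.) Your manipulations --- conjugating $\lambda,\mu$, using $f^{\lambda'}(I)=f^{\lambda}([n-1]\setminus I)$, and complementing $I,J$ --- land on the condition $I\cup J=\NDes(R)$, not $I\cup J=\Des(R')$, so as written the display at the end of your step (1) is not established. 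It is nevertheless true, but to reach it you must additionally invoke the reversal symmetry $f^{\lambda}(S)=f^{\lambda}(\{n-i:i\in S\})$ (evacuation; equivalently, invariance of ribbon Schur functions under $180^\circ$ rotation), applied simultaneously to $I$ and $J$, which carries the pairs with $I\cup J=\NDes(R)$ bijectively onto those with $I\cup J=\Des(R')$. Alternatively, bypass \Cref{thm: Schur expansion of JT_R*JT_R} altogether and use the $e$-expansion of $s_R$ as the paper does, which avoids the issue entirely.
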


Before proving \Cref{thm: representation theoretic model for Phi(s_R)}, we introduce a lemma and recall some relevant terminology. 
A \emph{composition} $\alpha = (\alpha_1, \dots, \alpha_k)$ is a sequence of positive integers. The \emph{size} of a composition is defined as $\alpha_1 + \cdots + \alpha_k$.
We define \( h_\alpha(\vec x) = h_{\alpha_1}(\vec x) \cdots h_{\alpha_k}(\vec x) \)
and \( e_\alpha(\vec x) = e_{\alpha_1}(\vec x) \cdots e_{\alpha_k}(\vec x) \).

Given a subset \( A = \{a_1 < \cdots < a_k\} \subseteq [n-1] \), we define the composition \( \comp(A) \) of size \( n \) by
\[
    \comp(A) = (a_1, a_2 - a_1, \dots, a_k - a_{k-1}, n - a_k).
\]
An \emph{ordered set partition} of \([n]\) is a set partition whose parts are equipped with a total order. For example, $\pi = (\{2,4\}, \{6\}, \{1,3,5\})$ is an ordered set partition of $[n]$ with three parts. The \emph{type} of an ordered set partition is the composition given by the sequence of sizes of its parts. In our running example, the type of $\pi$ is $(2,1,3)$.

\begin{lem}\label{lem:1}
For \( A\subseteq [n-1] \), we have
\[
  \Frob_{\vec x,\vec y}(\tilde{H}_{|A|-1}((\tilde{B}_n)_A)) =
  \sum_{B\subseteq A} (-1)^{|A|-|B|} h_{\comp(B)}(\vec x)h_{\comp(B)}(\vec y).
\]  
\end{lem}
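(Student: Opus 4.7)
The plan is to combine three ingredients: an explicit description of the $\mathfrak{S}_n \times \mathfrak{S}_n$-module $\alpha_B$ on maximal chains of $(\tilde B_n)_B$, the vanishing of all but the top reduced homology of $\Delta((\tilde B_n)_A)$, and Stanley's M\"obius-style inversion from \Cref{thm: Stanley's relations for alpha and beta}.

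First, I would identify $\alpha_B$ explicitly. Writing $B = \{b_1 < \cdots < b_k\}$, a maximal chain
\[
(\emptyset,\emptyset) \lessdot (S_1, T_1) \lessdot \cdots \lessdot (S_k, T_k) \lessdot ([n],[n])
\]
of $(\tilde B_n)_B$ with $|S_i| = |T_i| = b_i$ is uniquely determined by the pair of ordered set partitions $(S_1, S_2 \setminus S_1, \ldots, [n] \setminus S_k)$ and $(T_1, T_2 \setminus T_1, \ldots, [n] \setminus T_k)$ of $[n]$ of type $\comp(B)$. This bijection is equivariant with respect to the product action, so $\alpha_B \cong M^{\comp(B)} \otimes M^{\comp(B)}$, where $M^\alpha = \mathrm{Ind}_{\mathfrak{S}_\alpha}^{\mathfrak{S}_n} \mathbf{1}$ is the Young permutation module. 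Since the Frobenius characteristic of $M^\alpha$ is $h_\alpha$, this yields $\Frob_{\vec x, \vec y}(\alpha_B) = h_{\comp(B)}(\vec x) h_{\comp(B)}(\vec y)$.

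Next, I would show that $\tilde H_i((\tilde B_n)_A) = 0$ for $-1 \leq i < |A|-1$, so that \eqref{eq:5} collapses to $\beta_A = \tilde H_{|A|-1}((\tilde B_n)_A)$ as honest $\mathfrak{S}_n \times \mathfrak{S}_n$-modules. The natural route is to exhibit an EL-labeling of $\tilde B_n$ and invoke \Cref{thm: rank selection of EL-shellable poset} to conclude that $\Delta((\tilde B_n)_A)$ is homotopy equivalent to a wedge of $(|A|-1)$-spheres. A candidate labeling is to assign to the cover $(S,T) \lessdot (S \cup \{s\}, T \cup \{t\})$ the pair $(s,t) \in [n] \times [n]$; in every interval $[(S,T),(S',T')]$ the unique chain that simultaneously adds $S' \setminus S$ and $T' \setminus T$ in their natural increasing orders is the sole chain whose label sequence is strictly increasing componentwise. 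Since the standard lexicographic refinement on pairs fails to single out this chain, care is required in choosing the correct label order (possibly passing to a chain-lexicographic labeling or directly shelling the order complex via pairs of maximal chains of $B_n$). This topological input is where I expect the main technical difficulty.

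Finally, substituting into the second identity of \eqref{eq: Stanley's relations for alpha and beta} and applying $\Frob_{\vec x, \vec y}$ gives
\[
\Frob_{\vec x, \vec y}(\tilde H_{|A|-1}((\tilde B_n)_A)) = \sum_{B \subseteq A}(-1)^{|A|-|B|} \Frob_{\vec x, \vec y}(\alpha_B) = \sum_{B \subseteq A}(-1)^{|A|-|B|} h_{\comp(B)}(\vec x) h_{\comp(B)}(\vec y),
\]
which is the claim. As a sanity check, when $A = \emptyset$ the left side is $\Frob_{\vec x, \vec y}$ of the trivial module $\tilde H_{-1}(\emptyset) \cong \mathbb{C}$, namely $h_n(\vec x) h_n(\vec y)$, matching the single term $B = \emptyset$ on the right.
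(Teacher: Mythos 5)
Your outline coincides step for step with the paper's proof: the identification of $\alpha_B$ with the tensor product of two Young permutation modules of type $\comp(B)$ (hence $\Frob_{\vec x,\vec y}(\alpha_B)=h_{\comp(B)}(\vec x)h_{\comp(B)}(\vec y)$), the collapse of the virtual module $\beta_A$ to the top homology via shellability and \Cref{thm: rank selection of EL-shellable poset}, and the inversion formula of \Cref{thm: Stanley's relations for alpha and beta} are exactly the three ingredients used there, in the same order. The only point you leave open is the one the paper settles: which labeling makes $\tilde{B}_n$ EL-shellable. Your worry is legitimate---as you note, a \emph{total} lexicographic order on the label pairs fails (already in $\tilde{B}_2$ both chains with label words $(1,1),(2,2)$ and $(1,2),(2,1)$ would be increasing, destroying uniqueness)---but the fix is lighter than a CL-labeling or a direct shelling: EL-labelings are allowed to take values in an arbitrary poset, and the paper labels the cover $(S,T)\lessdot(S\cup\{s\},T\cup\{t\})$ by $(s,t)\in\ZZ\times\ZZ$ with the componentwise (product) partial order. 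Under this order a maximal chain of an interval is increasing exactly when the new elements are added in increasing order in \emph{both} coordinates (distinctness of the added elements upgrades weak to strict inequalities), so the increasing chain is unique; and at the first step where any other chain deviates, the increasing chain inserts the smallest still-available element in each coordinate, so its label is weakly smaller in both coordinates and different, hence strictly smaller in the product order, which gives the lexicographically-first condition. (Alternatively, the needed vanishing of $\tilde{H}_i((\tilde{B}_n)_A)$ for $i<|A|-1$ follows from $\tilde{B}_n$ being the Segre product of two Boolean lattices, hence Cohen--Macaulay, together with the fact that rank selection preserves Cohen--Macaulayness; but the poset-valued EL-labeling is all that is used in the paper.) With that step filled in, the rest of your argument, including the $A=\emptyset$ sanity check, is correct and matches the paper's proof.
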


\begin{proof}
Consider the edge labeling \( L : E(\tilde{B}_n) \to \mathbb{Z} \times \mathbb{Z} \) of \( \tilde{B}_n \) defined by  
\[
    L((A,B),(A\cup\{a\},B\cup\{b\})) = (a,b),
\]  
where, we regard \( \mathbb{Z} \times \mathbb{Z} \) as a poset with the order relation  
\[
    (a,b) <_{\mathbb{Z}\times\mathbb{Z}} (c,d) \quad \text{if and only if} \quad a < c \text{ and } b < d.
\]  
It is straightforward to verify that \( L \) is an EL-labeling of
\( \tilde{B}_n \). Consequently, by applying Theorem~\ref{thm: rank
  selection of EL-shellable poset}, we conclude that the reduced
homology \( \tilde{H}_i((\tilde{B}_n)_A) \) vanishes unless
\( i = |A| - 1 \), the highest possible degree. Thus, by \eqref{eq:5},
we obtain the identity
\[
    \beta_A = \sum_{i=-1}^{|A|-1} (-1)^{|A| - i - 1} \tilde{H}_i((\tilde{B}_n)_A) = \tilde{H}_{|A|-1}((\tilde{B}_n)_A).
\]
Hence, by \Cref{thm: Stanley's relations for alpha and beta}, we have
\begin{equation}\label{eq:12}
\tilde{H}_{|A|-1}((\tilde{B}_n)_A) = \sum_{B\subseteq A} (-1)^{|A|-|B|} \alpha_B.
\end{equation}

Let \( A = \{a_1 < \cdots < a_k\} \subseteq [n-1] \), and
consider the rank-selected subposet \( (B_n)_A \) of the Boolean poset
\( B_n \). The maximal chains in \( (B_n)_A \) correspond to the
ordered set partitions of \([n]\) into parts of sizes given by \(\comp(A)\).
The symmetric group \( \mathfrak{S}_n \) acts on these chains by
permuting the underlying elements of \([n]\), and this action realizes
the permutation module 
\( \mathbf{1} \uparrow^{\mathfrak{S}_n}_{\mathfrak{S}_{\comp(A)}}, \)
induced from the trivial module of the Young subgroup
\[
  \mathfrak{S}_{\comp(A)} := 
  \mathfrak{S}_{a_1} \times \mathfrak{S}_{a_2-a_1}\times \cdots \times \mathfrak{S}_{n - a_k}.
\]

Now consider the rank-selected subposet \( (\tilde{B}_n)_A \) of the
poset \( \tilde{B}_n \). The maximal chains in \( (\tilde{B}_n)_A \)
correspond to pairs \( (\pi_1, \pi_2) \) of ordered set partitions of
\([n]\), each of a fixed type \( \comp(A) \). The natural action of
\( \mathfrak{S}_n \times \mathfrak{S}_n \) on such pairs induces the
tensor product
of \( \mathbf{1} \uparrow^{\mathfrak{S}_n}_{\mathfrak{S}_{\comp(A)}} \), that is,
\[
    \alpha_A \cong \left( \mathbf{1} \uparrow^{\mathfrak{S}_n}_{\mathfrak{S}_{\comp(A)}} \right) \otimes \left( \mathbf{1} \uparrow^{\mathfrak{S}_n}_{\mathfrak{S}_{\comp(A)}} \right).
\]
Under the Frobenius characteristic map, the module corresponds to a product of homogeneous
symmetric functions:
\begin{equation}\label{eq:7}
  \Frob_{\vec x,\vec y}(\alpha_A) = h_{\comp(A)}(\vec{x}) h_{\comp(A)}(\vec{y}).
\end{equation}
By
\eqref{eq:7}, applying the Frobenius characteristic map to
\eqref{eq:12} yields the desired identity.
\end{proof}

Finally, we are ready to prove \Cref{thm: representation theoretic model for Phi(s_R)}.

\begin{proof}[Proof of Theorem~\ref{thm: representation theoretic model for Phi(s_R)}]
  By applying the inclusion-exclusion principle as in the proof of
  \cite[Proposition 7.19.1]{EC2}, we obtain that
\begin{equation}\label{eq: s_R' e-expansion}
    s_{R} = \sum_{A\subseteq \Des(R')} (-1)^{|\Des(R')|-|A|} e_{\comp(A)}.
\end{equation}
Therefore, the identity we need to show can be restated as
\[
  \Frob_{\vec x,\vec y}\left(\tilde{H}_{|\Des(R')|-1}((\tilde{B}_n)_{\Des(R')})\right)
  =  \sum_{A\subseteq \Des(R')} (-1)^{|\Des(R')|-|A|} h_{\comp(A)}(\vec x) h_{\comp(A)}(\vec y).
\]
This follows from \Cref{lem:1}.
\end{proof}

\section{Concluding Remarks}\label{Sec: Concluding remarks}

In this section, we review related conjectures and discuss potential
generalizations of our results. 
To introduce the first conjecture, recall Schur’s classical result~\cite{Schur1914}:  
for two real-rooted polynomials 
\[
   f(x) = a_0 + a_1 x + \cdots + a_n x^n 
   \quad \text{and} \quad 
   g(x) = b_0 + b_1 x + \cdots + b_m x^m,
\]
their \emph{factorial Hadamard product}
\[
   f \ostar g := \sum_{i=0}^{n} i! \, a_i b_i \, x^i
\]
is also real-rooted. In analogy with the way Sokal’s conjecture (\Cref{conj: Sokal's conjecture}) generalizes Mal\'{o}’s result, Sokal further proposed the following conjecture, which strengthens Schur’s theorem.

\begin{conj} \cite{Sokal2024} Let
  \( \lambda = (\lambda_1, \dots, \lambda_n) \) and
  \( \mu = (\mu_1, \dots, \mu_n) \) be partitions with at most \( n \)
  parts satisfying \( \mu \subseteq \lambda \). For any integers
  \( k \) and \( r \) with \( 0 \leq r \leq k-1 \), the following
  determinant is \(m\)-positive:
\[
    \det\left( ((\lambda_i - \mu_j - i+ j )!)^r e_{\lambda_i - \mu_j - i + j}(\vec{x}^{(1)}) \cdots e_{\lambda_i - \mu_j  - i + j}(\vec{x}^{(k)}) \right)_{i,j=1}^{n}. \]
\end{conj}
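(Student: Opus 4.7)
The plan is to extend the lattice-path framework of Section~\ref{Sec: main proofs for 3x2 avoiders} to encode the factorial factors combinatorially. For a single south-southwest path $p$ with $m$ southwest steps, the product $((m)!)^r \prod_{l=1}^k e_m(\vec{x}^{(l)})$ is naturally the generating function for tuples consisting of $p$ together with $k$ labelings of its southwest steps by distinct entries drawn from the variable sequences $\vec{x}^{(1)}, \dots, \vec{x}^{(k)}$ and $r$ independent total orderings of those $m$ steps. Each matrix entry then becomes a weighted count of such enhanced paths, and the determinant expands, via the analog of \eqref{eq: first det expansion}, as an alternating sum over tuples of enhanced paths sharing a common permutation type.

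Following the strategy of \Cref{thm: TL multinetwork expansion}, I would collect these tuples according to the underlying multinetwork $\vec{H}$ and ask whether the enriched path-family generating function still admits a clean product form. If it does, an analog of \Cref{cor:all TL imms s positive} would yield Schur- (hence monomial-) positivity once the additional orderings and labelings are summed out; any residual cancellations would then be handled by a sign-reversing involution that permutes the extra orderings and labelings rather than swapping the paths themselves, thereby sidestepping the obstruction identified in \Cref{prop: counterexample}.

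The main obstacle sits at $r = 0$, where the statement reduces to the determinantal specialization of \Cref{conj: stronger sokal's conjecture} with $\lambda^{(i)}/\mu^{(i)} = \lambda/\mu$ for all $i$. This is the case settled by \Cref{cor:all TL imms s positive} only under the $(3 \times 2)$-avoidance hypothesis, and \Cref{prop: counterexample} shows that no edge-preserving sign-reversing involution on multinetworks can succeed in the general skew-shape setting. The crux of any full proof will therefore be to leverage the extra decorations afforded by the factorials (when $r \geq 1$) to construct an involution that does \emph{not} preserve the underlying multiset of edges, or else to pivot to a completely different tool --- for instance, a representation-theoretic construction generalizing \Cref{thm: representation theoretic model for Phi(s_R)}, in which the $m$-positive expansion arises as the Frobenius image of a permutation module whose basis consists of paths decorated by orderings and with a natural action of a larger product of symmetric groups. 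Whether such a module admits an intrinsic realization --- and, independently, whether the $r = 0$ base case can be attacked without the $(3 \times 2)$-avoidance restriction --- appears to be the deepest question raised by this conjecture.
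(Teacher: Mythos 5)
This statement is a conjecture, not a theorem: the paper states it (attributing it to Sokal) in the concluding remarks precisely because no proof is known, and nothing in the paper establishes it. So there is no ``paper's own proof'' to compare against, and your text should be judged as a proposed proof on its own terms. Judged that way, it has a genuine gap: it is a research plan, not an argument. The two pivotal steps are left entirely unverified. First, the claim that the enriched path-family generating function ``still admits a clean product form'' after grouping by multinetwork is exactly the point where the paper's machinery breaks down outside the $(3\times2)$-avoiding setting: \Cref{lem: beta for multinetworks} relies on \Cref{lem:beta for ribbons}, which in turn uses the $(3\times2)$-avoidance hypothesis to prevent three paths from sharing a vertex, and \Cref{prop: counterexample} shows that for general connected skew shapes one gets multinetworks $\vec H$ with $\sgn(\beta(\vec H))<0$, so no edge-preserving cancellation scheme can exist. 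Decorating paths with $r$ orderings and extra labelings does not by itself change $\beta(\vec H)$ or its sign; you would need to exhibit a concrete involution (necessarily not preserving the underlying multiset of edges) or a concrete module, and neither is constructed.

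Second, as you yourself note, the case $r=0$, $k=2$ of the statement is precisely \Cref{conj: Sokal's conjecture}, which the paper leaves open (and its strengthening \Cref{conj: stronger sokal's conjecture} likewise); the positive results of the paper (\Cref{thm: TL pos for 3x2 avoiding}, \Cref{thm: TL multinetwork expansion}, \Cref{cor:all TL imms s positive}) only cover the ribbon-like case. A correct write-up here would either prove a genuinely new special case (e.g.\ the factorial-decorated analogue for $(3\times2)$-avoiding shapes, which your first paragraph gestures at but does not carry out --- note also that encoding the factor $e_m(\vec x^{(1)})\cdots e_m(\vec x^{(k)})$ by one path with $k$ labelings differs from the paper's $k$-tuple-of-paths encoding on which the $\beta$/Temperley--Lieb analysis depends), or simply record the statement as open. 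As it stands, the proposal identifies the right obstacles but does not overcome any of them.
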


Sokal's conjecture (\Cref{conj: Sokal's conjecture}) can be stated as
$\Phi_{\vec x,\vec y}(s_{\lambda / \mu})$ being $m$-positive; see
\Cref{Sec: rep theory construction} for the definition of the map
\( \Phi_{\vec x,\vec y} \). Stanley extended Sokal's conjecture to a
broader setting.

\begin{conj}\cite{Stanley2024}
  For any partition \( \lambda \), the multi-symmetric function
  \( \Phi_{\vec x,\vec y}(m_\lambda) \) is \( m \)-positive.
\end{conj}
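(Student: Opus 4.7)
The plan is to prove Stanley's conjecture by leveraging the ring-homomorphism structure of $\Phi_{\vec x,\vec y}$ combined with combinatorial cancellation. Since $\Phi$ is determined by $\Phi(e_n) = e_n(\vec x)e_n(\vec y)$ and $\{e_\mu\}$ is a basis of $\Lambda$, the first step is to expand $m_\lambda = \sum_{\mu} c_{\lambda\mu}\, e_\mu$ and obtain
\[
\Phi_{\vec x,\vec y}(m_\lambda) = \sum_\mu c_{\lambda\mu}\, e_\mu(\vec x)\, e_\mu(\vec y).
\]
The coefficients $c_{\lambda\mu}$ are typically signed, so the real task is to show that this expression is $m$-positive after cancellation.

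The second step is to interpret each $e_\mu(\vec x)\,e_\mu(\vec y)$ as a generating function over pairs of nonintersecting lattice-path families via \eqref{eq:LGV}, so that $\Phi_{\vec x,\vec y}(m_\lambda)$ becomes a signed sum over such pairs. The plan is then to construct a sign-reversing, weight-preserving involution whose fixed-point set provides a manifestly nonnegative enumeration, in the spirit of the proof of \Cref{thm: TL multinetwork expansion}. As stepping stones, I would first verify the conjecture for hook partitions $\lambda = (a,1^b)$, for two-row partitions, and for $\lambda = (k)$, where the $e$-expansion of $m_\lambda$ is explicit and can be analyzed directly; these cases should suggest the combinatorial shape of the cancellations to be implemented in general.

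The hard part is that \Cref{prop: counterexample} already demonstrates that any involution acting only on the underlying multiset of edges of a multinetwork is doomed whenever the shapes involved contain a $3 \times 2$ block. Since Stanley's conjecture strictly generalizes \Cref{conj: Sokal's conjecture}, which itself remains open, any successful approach must exploit a finer structure -- perhaps a coloring, ordering, or decomposition of the path families -- on which the required involution can be defined. I expect that completely resolving the conjecture will require a genuinely new mechanism beyond the Lindstr\"om--Gessel--Viennot paradigm employed throughout the present paper; a representation-theoretic lift along the lines of \Cref{thm: representation theoretic model for Phi(s_R)}, interpreted in a permutation-module (rather than Specht-module) framework appropriate to monomial-positivity, is one plausible avenue worth pursuing in parallel.
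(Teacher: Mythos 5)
The statement you are addressing is not a theorem of the paper at all: it is an open conjecture, attributed to Stanley, recorded in the concluding remarks without proof. Since it implies \Cref{conj: Sokal's conjecture} (every skew Schur function is a nonnegative combination of the $m_\lambda$, so $m$-positivity of all $\Phi_{\vec x,\vec y}(m_\lambda)$ would give $m$-positivity of all $\Phi_{\vec x,\vec y}(s_{\lambda/\mu})$), any genuine proof would resolve a problem the paper explicitly leaves open. Your proposal does not do this, and in fact concedes as much: it is a research plan, not a proof.

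Concretely, the gap is that after the first (correct but routine) step --- writing $m_\lambda=\sum_\mu c_{\lambda\mu}e_\mu$ and using that $\Phi$ is a ring homomorphism to get $\Phi_{\vec x,\vec y}(m_\lambda)=\sum_\mu c_{\lambda\mu}\,e_\mu(\vec x)e_\mu(\vec y)$ --- the entire content of the conjecture is the cancellation of the signed coefficients $c_{\lambda\mu}$, and you supply no mechanism for it. The promised sign-reversing involution is never constructed, the small cases (hooks, two rows, $\lambda=(k)$) are not actually carried out, and you yourself point out that \Cref{prop: counterexample} rules out the natural involutions that preserve the underlying multiset of edges, so the ``spirit of \Cref{thm: TL multinetwork expansion}'' cannot be followed as stated. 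There is also a technical mismatch in the setup: \eqref{eq:LGV} and the Temperley--Lieb machinery of the paper apply to determinants (and immanants) of Jacobi--Trudi matrices, whereas $e_\mu(\vec x)e_\mu(\vec y)$ is a plain product of elementary symmetric functions; it can be encoded by tuples of single lattice paths, but none of the subnetwork/$\beta$-analysis of the paper transfers to that situation without new ideas. Your closing remark that a ``genuinely new mechanism'' is needed is accurate, and it is precisely the admission that no proof has been given.
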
  

Recall that Greene~\cite{Greene1992} proved that ordinary immanants of Jacobi--Trudi matrices are monomial-positive. Stembridge~\cite{Stembridge1992a} proposed several conjectural
generalizations of Greene’s theorem. To state his conjectures, we
define the \emph{monomial (virtual) character} \( \phi_\lambda \) as
follows: For \( \lambda\vdash n \) and \( w \in \mathfrak{S}_n \), 
\[
    \phi_\lambda(w) := \langle m_\lambda, p_{\tau(w)} \rangle,
\]  
where \( \langle \cdot ,\cdot \rangle \) is the Hall inner product, and
\( p_{\tau(w)} \) is the power sum symmetric function indexed by the
partition \( \tau(w) \) corresponding to the cycle type of \( w \).
Among Stembridge's conjectures are the following:
\begin{enumerate}
\item Ordinary immanants of Jacobi--Trudi matrices are \(s\)-positive.  
\item Monomial immanants of Jacobi--Trudi matrices are \(m\)-positive.  
\item Monomial immanants of Jacobi--Trudi matrices are \(s\)-positive.  
\end{enumerate}

It is worth noting that the last conjecture implies all the others.
Haiman \cite{Haiman1993} confirmed the first conjecture using
Kazhdan--Lusztig theory. The last conjecture remains an open problem
of significant importance. The positivity of the coefficient of
\( s_{(n)} \) in this case is equivalent to the famous
Stanley--Stembridge conjecture \cite{Stanley1993}, which was recently
resolved by Hikita \cite{Hikita2024}.

Next, we propose a possible extension of Greene’s result and the
second conjecture mentioned above to our setting by considering the
monomial positivity of the immanants of the Hadamard product of
Jacobi--Trudi matrices.

\begin{conj}\label{conj: monomial immanant of JT*JT monomial positive}
  Let \( \lambda\), \( \mu \), \( \nu \), and \( \rho \) be partitions
  of length at most \( n \), satisfying \( \mu \subseteq \lambda \)
  and \( \rho \subseteq \nu \). For any partition \( \eta \vdash n \),
  \begin{enumerate}
  \item the ordinary immanant
    \( \imm_\eta\left( \JT_{\lambda/\mu}(\vec x) * \JT_{\nu/\rho}(\vec
      y) \right) \) is \( m \)-positive,
  \item the monomial immanant
\( \imm_{\phi_\eta}\left( \JT_{\lambda/\mu}(\vec x) *
\JT_{\nu/\rho}(\vec y) \right) \)  
is \( m \)-positive.
  \end{enumerate}
\end{conj}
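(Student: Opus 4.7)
The plan is to adapt the multinetwork expansion of \Cref{sec:step1 m expansion}, which reduced Hadamard products of dual Jacobi--Trudi matrices to sums over bi-networks, to the setting of ordinary and monomial immanants of ordinary Jacobi--Trudi matrices. The first step is to realize $\JT_{\lambda/\mu}(\vec x)$ as a path matrix using south and west lattice steps, so that in parallel with \eqref{eq: hadamard second expansion} one obtains
\[
  \imm_f\bigl(\JT_{\lambda/\mu}(\vec x) * \JT_{\nu/\rho}(\vec y)\bigr)
  = \sum_{\vec H} f\bigl(\beta(\vec H)\bigr)\,\wt_{\vec x,\vec y}(\vec H),
\]
where $\vec H=(H^{(1)},H^{(2)})$ runs over pairs of subnetworks admitting path families of common type and $\beta(\vec H)=\beta(H^{(1)})\star \beta(H^{(2)})\in \CC[\sn]$. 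Here $f$ is either the irreducible character $\chi^\eta$ for part~(1) or the monomial virtual character $\phi_\eta$ for part~(2).

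The second step is to reduce the evaluations $\chi^\eta(\beta(\vec H))$ and $\phi_\eta(\beta(\vec H))$ to manifestly nonnegative objects. For part~(2) I would expand $\phi_\eta$ via $m_\eta=\sum_\lambda (K^{-1})_{\eta\lambda} s_\lambda$ and reduce to Kazhdan--Lusztig immanants, for which Haiman's theorem supplies Schur positivity on a \emph{single} Jacobi--Trudi matrix. Following the blueprint of \Cref{thm: representation theoretic model for Phi(s_R)}, one would then construct an $\sn\times\sn$-module --- for instance, the top reduced homology of a rank-selected subcomplex of $\tilde B_n$ refined by descent data associated to $\lambda/\mu$ and $\nu/\rho$ --- whose $\Frob_{\vec x,\vec y}$ image equals the immanant, so that $s$-positivity (hence $m$-positivity) follows automatically. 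Part~(1) would then be recovered from~(2) through the standard expansion of $\chi^\eta$ in monomial virtual characters, provided the resulting cancellations can be organized module-theoretically.

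The main obstacle is that \Cref{lem: beta for multinetworks} has no analogue outside the $(3 \times 2)$-avoiding setting: for a generic bi-network, $\beta(H^{(1)})\star \beta(H^{(2)})$ is an arbitrary element of $\CC[\sn]$ rather than a scalar multiple of some $\prod_{i\in I}(1+\s_i)$, and the cancellation supplied by $f$ cannot be realized by any sign-reversing involution preserving the underlying multisets of edges, as shown in \Cref{prop: counterexample}. A further caveat is that the non-$m$-positivity of $\det(\HJT_{(2,2,2)}(\vec x) * \HJT_{(2,2,2)}(\vec y))$ noted in \Cref{sec: limitations} coincides with the case $\eta=(1^n)$ of \Cref{conj: monomial immanant of JT*JT monomial positive}, so the conjecture must at minimum be refined to exclude the sign character; isolating the precise class of $\eta$ for which the representation-theoretic module above exists is, in my view, the principal difficulty of the full program.
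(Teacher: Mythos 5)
The statement you have attempted is \Cref{conj: monomial immanant of JT*JT monomial positive}, which the paper leaves open: it appears in the concluding remarks as a proposed extension of Greene's theorem and of Stembridge's conjectures, and no proof is given there. Your text is likewise not a proof but a program, and by your own account its central step---evaluating $f(\beta(\vec H))$ for arbitrary bi-networks---has no analogue of \Cref{lem: beta for multinetworks} outside the $(3\times2)$-avoiding setting, so nothing is actually established. Within the program there are further gaps: expanding $\phi_\eta$ through the inverse Kostka matrix introduces signs, so the proposed reduction to Kazhdan--Lusztig immanants cannot by itself yield positivity; Haiman's theorem and the Rhoades--Skandera results apply to a \emph{single} Jacobi--Trudi matrix, whereas the entire difficulty here is the Hadamard product, for which no module is known beyond the ribbon construction of \Cref{thm: representation theoretic model for Phi(s_R)}; and deducing part (1) from part (2) requires no ``cancellations organized module-theoretically,'' since $\chi^\eta$ expands in the monomial virtual characters $\phi_\mu$ with nonnegative (Kostka) coefficients---this is exactly why the paper remarks that the second part implies the first.

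Your closing caveat also rests on a misreading. The conjecture concerns Hadamard products of the dual ($e$-based) Jacobi--Trudi matrices, not the $h$-based matrices $\HJT_{\lambda/\mu}$; the failure of monomial positivity for $\det(\HJT_{(2,2,2)}(\vec x) * \HJT_{(2,2,2)}(\vec y))$ recorded in \Cref{sec: limitations} is therefore not the $\eta=(1^n)$ case of this conjecture. That case is precisely Sokal's conjecture (\Cref{conj: Sokal's conjecture}), which is open, not false, and excluding the sign character as you suggest would remove the very case the conjecture is designed to generalize. So the claim that the statement must be refined is unsupported, and the proposal as written neither proves the conjecture nor correctly delimits it.
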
  

Note that the second part of \Cref{conj: monomial immanant of JT*JT
  monomial positive} implies the first. \Cref{conj: Sokal's conjecture} is equivalent to the special case of the first part for
\( \eta=(1^n) \).

\bibliographystyle{abbrv}
\bibliography{local.bib}

\end{document}